\newcommand{\Z}{\mathbb{Z}}
\newcommand{\Q}{\mathbb{Q}}
\newcommand{\CL}{\mathcal{C\ell}}
\newcommand\SL{\mathrm{SL}(2,\mathbb{Z})}
\newcommand\GL{\mathrm{GL}(2, \mathbb{Z})}
\newcommand\Cl{\mathcal{C\ell}}
\newcommand\Gal{\mathrm{Gal}}
\newtheorem{lemma}{Lemma}[section]
\newtheorem{theorem}[lemma]{Theorem}
\newtheorem{prop}[lemma]{Proposition}
\newtheorem{corollary}[lemma]{Corollary}
\newtheorem{remark}[lemma]{Remark}
\title{\vspace{-\baselineskip}\sffamily\bfseries Binary quadratic forms with the same value set}
\date{\today}
\author[1]{\'Etienne Fouvry\thanks{CNRS, Laboratoire de math\' ematiques d'Orsay, 91405 Orsay, France, etienne.fouvry@universite-paris-saclay.fr}}
\author[2]{Peter Koymans\thanks{Institute for Theoretical Studies, ETH Zurich, 8092 Zurich, Switzerland, peter.koymans@eth-its.ethz.ch}}
\affil[1]{Universit\'e Paris--Saclay}
\affil[2]{ETH Zurich}
\date{\today}
\begin{document}
\maketitle

\begin{abstract} 
Given a binary quadratic form $F \in \Z[X, Y]$, we define its value set $F(\Z^2)$ to be $\{F(x, y) : (x, y) \in \Z^2\}$. If $F$ and $G$ are two binary quadratic forms with integer coefficients, we give necessary and sufficient conditions on $F$ and $G$ for $F(\Z^2) = G(\Z^2)$.
\end{abstract}
 
\section{Introduction}
\label{intro}
Let 
\begin{equation}
\label{defF} 
F(X, Y) = aX^2 + bXY + cY^2
\end{equation}
be a binary quadratic form with integer coefficients $a, b, c$. The group $\GL$ acts on the set of such forms by the following linear change of variables: if $\gamma = \begin{pmatrix} \alpha & \beta \\ \gamma & \delta\end{pmatrix}$ belongs to $\GL$, then $F \circ \gamma$ is defined by
$$
\left(F\circ \gamma\right) (X, Y) = F(\alpha X + \beta Y , \gamma X + \delta Y).
$$
The orbit of $F$ under this action is denoted by $[F]_{\GL}$ and the quadratic form $G$ is {\it $\GL$--equivalent to $F$} if and only if it belongs to $[F]_{\GL}$. We then write $G\sim_{\GL} F$. In \S \ref{Scheringresult} and \ref{modern}, we will be interested by the action of the group $\SL$ and the corresponding notations will then be obvious.
 
One fascinating invariant of $F$ is its {\it value set}, defined by
\[
\text{Im}(F) = \{F(x, y) : (x, y) \in \Z^2\}.
\]
This is indeed an invariant, as it is left unchanged by the above action of $\GL$ on $F$. We say that a binary quadratic form $F$ is {\it positive definite} if $b^2 - 4ac < 0$ and $a > 0$. These conditions are equivalent to the condition $F(x, y) > 0$ for all pairs of integers $(x, y) \neq (0, 0)$. 

Our paper is inspired by the following result.
 

\begin{theorem}[\cite{De, Wa1, Wa2}]
\label{starting}
Let $F, G \in \Z[X, Y]$ be two positive definite binary quadratic forms such that $\textup{Im}(F) = \textup{Im}(G)$. Then exactly one of the following is true
\begin{itemize}
\item $F$ and $G$ are $\GL$--equivalent;
\item $F$ is $\GL$--equivalent to $c(X^2 + XY + Y^2)$ and $G$ is $\GL$--equivalent to $c(X^2 + 3Y^2)$ for some $c \in \Z_{> 0}$;
\item $F$ is $\GL$--equivalent to $c(X^2 + 3Y^2)$ and $G$ is $\GL$--equivalent to $c(X^2 + XY + Y^2)$ for some $c \in \Z_{> 0}$.
\end{itemize}
\end{theorem}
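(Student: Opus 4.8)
The plan is to recover enough of the discriminant of a form from its value set that the problem reduces, via Chebotarev's density theorem and the class number formula for orders, to a statement about the group $\mathrm{Pic}(\OO_d)$ of the corresponding imaginary quadratic order. First the elementary points. The three alternatives are mutually exclusive since $X^2+XY+Y^2$ and $X^2+3Y^2$ have discriminants $-3$ and $-12$ and hence are not $\GL$--equivalent, while $\Imag\bigl(c(X^2+XY+Y^2)\bigr)=\Imag\bigl(c(X^2+3Y^2)\bigr)$ for every integer $c>0$, either from $4(X^2+XY+Y^2)=(2X+Y)^2+3Y^2$ together with a parity check, or because both forms represent exactly those $n$ in which every prime $\equiv 2\pmod 3$ occurs to an even power. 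The reduction to primitive forms is immediate: if $m$ is the content of $F$, then every value of $F$ is a multiple of $m$ while $F/m$ is primitive and so its values have greatest common divisor $1$; hence $m=\gcd\bigl(\Imag(F)\setminus\{0\}\bigr)$ is determined by $\Imag(F)$, so $F$ and $G$ have the same content and, after dividing it out, we may assume $F,G$ primitive, the factor $m$ reappearing as $c$.

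\emph{Recovering the discriminant (the crux).} For a prime $\ell$ set $\Imag_\ell(F)=\{F(x,y):x,y\in\Z_\ell\}$. Any $\ell$--adic value is congruent modulo $\ell^k$ to an integer value, so $\Imag_\ell(F)$ is the closure of $\Imag(F)$ in $\Z_\ell$, and $\Imag(F)=\Imag(G)$ gives $\Imag_\ell(F)=\Imag_\ell(G)$ for every $\ell$. A direct case analysis over $\Z_\ell$ then shows: for odd $\ell$ the value set determines the form up to $\mathrm{GL}_2(\Z_\ell)$--equivalence, i.e. it determines $v_\ell(d_F)$ and the class of $d_F$ modulo $(\Z_\ell^\times)^2$ (passing to a proper suborder at $\ell$ deletes the non-squares modulo $\ell$ from the unit values, and so is visible), whereas over $\Z_2$ the only coincidence of value sets between $\mathrm{GL}_2(\Z_2)$--inequivalent primitive forms is that, at an inert prime $2$, the norm form of the maximal order and that of its conductor-$2$ suborder have the same value set. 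Combining the odd-prime conditions forces $d_F$ and $d_G$ to have the same odd part, so $d_F/d_G=2^a$ for some $a\in\Z$ (signs cancel, both discriminants being negative); $a$ odd is excluded, since then $\Q(\sqrt{d_F})\ne\Q(\sqrt{d_G})$ and $\leg{2}{\ell}=1$ would hold for almost all $\ell$, and $|a|\ge 4$ is excluded, since then $\Imag_2$ changes. Hence, after possibly interchanging $F$ and $G$, either $d_F=d_G$, or $d_F=4d_G$ with $2$ inert in $\Q(\sqrt{d_G})$.

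\emph{Equal discriminants.} Put $d:=d_F=d_G$. By Gauss's correspondence between primitive forms of discriminant $d$ and $\mathrm{Pic}(\OO_d)$, for $p\nmid d$ one has $p\in\Imag(F)$ iff $p$ splits in $\Q(\sqrt d)$ and some prime above $p$ lies in $[F]$ or $[F]^{-1}$. Since $\Imag(F)=\Imag(G)$ forces $F$ and $G$ to represent the same primes, Chebotarev's theorem for the ring class field of $\OO_d$ gives $[G]\in\{[F],[F]^{-1}\}$; as a form and its opposite are $\GL$--equivalent, $F\sim_{\GL}G$, which is the first alternative.

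\emph{The case $d_F=4d_G$.} Let $d_0\equiv 5\pmod 8$ be the common fundamental discriminant and write $d_G=d_0g^2$, $d_F=4d_0g^2$ with $g$ odd. By the same correspondence and Chebotarev, the density of the primes represented by a primitive form of discriminant $d$ among the primes split in $\Q(\sqrt d)$ equals $|\{[F],[F]^{-1}\}|/h(d)\le 2/h(d)$; equality of $\Imag(F)$ and $\Imag(G)$ forces these densities to agree, i.e. $h(d_F)\,|\{[G],[G]^{-1}\}|=h(d_G)\,|\{[F],[F]^{-1}\}|$. But the class number formula for orders, together with $\leg{d_0}{2}=-1$, gives $h(d_F)/h(d_G)=3\cdot|\OO_{d_F}^\times|/|\OO_{d_G}^\times|$, and $|\OO_{d_F}^\times|=|\OO_{d_G}^\times|$ unless $g=1$ and $d_0=-3$. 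Outside that exceptional case we get $h(d_F)=3h(d_G)$, hence $|\{[F],[F]^{-1}\}|=3|\{[G],[G]^{-1}\}|\ge 3$, which is impossible. Therefore $g=1$ and $d_0=-3$, so $\{d_F,d_G\}=\{-3,-12\}$; since $h(-3)=h(-12)=1$, the forms $F,G$ are $\GL$--equivalent, in one order or the other, to $X^2+XY+Y^2$ and $X^2+3Y^2$, and restoring the content this is the second or third alternative. The genuinely hard part throughout is the discriminant step: it rests on a careful local study over the rings $\Z_\ell$, and in particular an exhaustive classification over $\Z_2$ of which inequivalent binary forms share a value set — precisely the analysis that singles out $-3$ and $-12$.
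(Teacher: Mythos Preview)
Your approach matches the paper's in every structural respect: reduction to primitive forms via the content, a local analysis to pin the discriminants down to $d_F=d_G$ or $d_F=4d_G$ with $d_G\equiv5\pmod 8$, the equal--discriminant case via the ring class field and Chebotarev (this is the paper's Lemma~\ref{lEqual}), and the $4d$ case via the density of represented primes together with the class--number formula for orders (the paper's Lemma~\ref{lDensity} and Propositions~\ref{901}--\ref{-3unique}).

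The only real difference is in presentation of the discriminant step. You phrase it as a comparison of the $\ell$--adic closures $\Imag_\ell(F)$ and assert the outcome of a case analysis over each $\Z_\ell$. The paper instead first uses Lemma~\ref{lDensity} to find a prime $p$ represented by both forms, moves both to the shape $pX^2+\ast XY+\ast Y^2$, and then compares the resulting forms $(2pX+b_1Y)^2-dY^2$ and $(2pX+B_1Y)^2-DY^2$ modulo prime powers: for odd $q$ a short counting argument with $|\square_{q^k}|$ forces $v_q(d)=v_q(D)$, and for $q=2$ a lengthy case split (culminating in a table modulo $32$) yields $d\equiv5\pmod8$ and $D=4d$. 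The two arguments are doing the same local work; the paper's version is fully explicit, whereas your ``direct case analysis over $\Z_\ell$'' and ``exhaustive classification over $\Z_2$'' are exactly the places where the genuine labour lies and are not carried out in your text. If you intend this as a self--contained proof rather than a sketch, that $2$--adic classification needs to be written down; it is not a one--liner.
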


According to the historical information that we have at our disposal, we found it adequate to name the quadratic forms $X^2 + XY + Y^2$ and $X^2 + 3Y^2$ {\it Delone--Watson forms} (see \cite[Introduction]{FoKo1}). We respectively denote them by 
$$
{\rm dw}(X, Y) := X^2 +XY +Y^2 \text { and } {\rm DW}(X, Y) := X^2 +3Y^2,
$$ 
and they satisfy the important relation 
\begin{equation}
\label{dwDW}
{\rm dw} (2X, Y) = {\rm DW} (X+Y, X).
\end{equation}
Furthermore, by \cite[Theorem]{Wa1}, the content of Theorem \ref{starting} remains true when the forms $F$ and $G$ have real coefficients, not necessarily integral. 

In this paper we study the indefinite case, which corresponds to $b^2 - 4ac > 0$. To state our results, we introduce some notation. Let $F$ and $G$ be two quadratic forms. We say that $F \sim_{\text{val}} G$ if $\text{Im}(F) = \text{Im}(G)$ . We write $[F]_{\text{val}}$ for the corresponding equivalence class containing $F$. As remarked before, we have the trivial inclusion
\begin{equation}
\label{inclusion}
[F]_{\GL} \subseteq [F]_{\text{val}}.
\end{equation}
When the inclusion \eqref{inclusion} is strict, we say that $F$ is an {\it extraordinary form}. When \eqref{inclusion} turns out to be an equality, we say that $F$ is an {\it ordinary form}. Theorem \ref{starting} states that in the set of positive definite quadratic forms, there are only two extraordinary forms, up to a positive homothety and up to the action of $\GL$. Our purpose is to study the set of extraordinary forms mainly in the case of indefinite quadratic forms, but we shall recover Theorem \ref{starting} in the process.

We first recall some classical notions and notations in that context. The {\it discriminant} of $F$, as written in \eqref{defF}, is defined by
$$
{\rm disc}(F) := b^2 -4ac,
$$
so we have ${\rm disc}(F) \equiv 0, 1 \bmod 4$. The form $F$ is said to be {\it primitive} if and only if $\gcd(a, b, c) =1.$ The discriminant of a form is invariant under the operation of $\GL$ and so is the value of $\gcd(a, b, c)$. This remark allows us to speak of a {\it primitive class} $[F]_{\GL}$ when one element of the class is primitive, and so all the elements are. Similarly, Lemma \ref{lContent} below allows us to also speak of a {\it primitive class} $[F]_{\rm val}$. Starting from the $F$, we build the forms $F^\dag$ and $F^\ddag$ defined by 
\begin{equation}
\label{fdag}
F^\dag (X, Y) : = F(2X, Y) \text{ and } F^\ddag (X, Y) := F(X, 2Y).
\end{equation}
In particular, the equality \eqref{dwDW} implies
\begin{equation}
\label{208}
{\rm DW} \sim_{\GL} {\rm dw}^\dag.
\end{equation}

\subsection{Different types of class numbers}
\label{different}
Let $d$ be an integer (of any sign) different from a square. Then $d$ is the discriminant of a quadratic order $\mathcal O = \mathcal O_d$ if and only if $d \equiv 0, 1\bmod 4$. Furthermore, $\mathcal O$ is unique. We denote by $h(d)$ the class number of that order, so
$$
h(d) = \left\vert I(\mathcal O) / P(\mathcal O) \right\vert,
$$
where $I(\mathcal O)$ is the group of proper fractional $\mathcal O$--ideals and $P(\mathcal O)$ is the subgroup of principal ideals (see {\cite[Def. 5.2.7]{Coh} or \cite[Ch. 2, \S 7]{Cox} for instance}). The quotient $I(\mathcal O) /P (\mathcal O) $ is called the {\it (ordinary) class group} of $\mathcal O$ and is denoted by $\Cl (\mathcal O)$ (or $\Cl (d)$). 

Let $P^+(\mathcal O)$ be the subgroup of principal ideals generated by a totally positive element. The quotient $I(\mathcal O) / P^+(\mathcal O)$ is called the {\it narrow class group} of $\mathcal O$ and is denoted by $\Cl^+(\mathcal O)$ (or $\Cl^+(d)$). Its cardinality is the {\it narrow class number} and denoted by $h^+(d)$. The ratio $h^+(d)/h(d) $ has value $1$ or $2$. When $d<0$, this ratio is always equal to $1$. When $d> 0$ the ratio equals $1$ if and only if $\mathcal O$ has a fundamental unit with norm equal to $-1$. Finally, $h^+(d)$ is also the number of $\SL$--classes of primitive quadratic forms with discriminant $d$ (see Proposition \ref{t7.7} below).

We also introduce $h^\star(d)$, which is the number of $\GL$--classes of primitive quadratic forms with discriminant $d$. The number $h^\star (d)$ depends on the group structure of $\Cl^+(d)$. Indeed, if the group law on $\Cl^+(d)$ is written multiplicatively, and if we identify $x$ with $x^{-1}$, we obtain a set with cardinality ${h^\star}(d)$. We have the inequality
$$
h^\star(d) \geq \max\{1, h^+(d)/2\}.
$$

\subsection{The main theorems}
Our first result gives necessary and sufficient conditions for a primitive form to be extraordinary.

\begin{theorem}
\label{F=fUf}
Let $F$ be a primitive binary quadratic form with integer coefficients such that its discriminant is not a square. Then $[F]_{\textup{val}}$ breaks either into one or into two equivalence classes under $\GL$.

It breaks into two equivalence classes if and only if there exists $f$ such that $f \sim_{\textup{val}} F$ and such that the discriminant $d$ of $f$ satisfies $d \equiv 5 \bmod 8$ and $h^+(d) = h^+(4d)$. In this case we have a decomposition
\begin{equation}
\label{split}
[F]_{\textup{val}} = [f]_{\GL} \cup [f^\dag]_{\GL}.
\end{equation}
\end{theorem}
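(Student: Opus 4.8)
The plan is to study the value set $\textup{Im}(F)$ through the arithmetic of the quadratic order $\OO_d$, $d = \textup{disc}(F)$, and to understand exactly when "scaling the variable by $2$" produces a genuinely new $\GL$-class with the same value set. First I would set up the dictionary between primitive forms and (proper) ideal classes: by Proposition \ref{t7.7} (quoted above), primitive forms of discriminant $d$ up to $\SL$-equivalence correspond to $\Cl^+(d)$, and passing to $\GL$ merges a class with its inverse, giving $h^\star(d)$ classes. The value set $\textup{Im}(F)$ is, up to the usual adjustments, governed by which integers (and with which local conditions at $2$ and at primes dividing the conductor) are represented by some form in the genus/class; the key point is that $\textup{Im}(F) = \textup{Im}(G)$ forces $F$ and $G$ to have closely related discriminants — after extracting contents via Lemma \ref{lContent}, the possible discriminants are $d$, $4d$, or $d/4$ of one another, because replacing $X$ by $2X$ multiplies the discriminant by $4$ and the forms ${\rm dw}$, ${\rm DW}$ of \eqref{dwDW}, \eqref{208} are precisely the mechanism by which a form of discriminant $d$ and one of discriminant $4d$ can share a value set. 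So the first main step is a \emph{discriminant rigidity} lemma: if $F \sim_{\textup{val}} G$ and both are primitive, then $\textup{disc}(G) \in \{\textup{disc}(F), 4\,\textup{disc}(F), \textup{disc}(F)/4\}$.

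The second step is to pin down when $\textup{disc}(F)$ and $4\,\textup{disc}(F)$ can actually occur together. Write $f$ for a form of discriminant $d$ in $[F]_{\textup{val}}$; then $f^\dag$ has discriminant $4d$ by \eqref{fdag}. I would show: $f \sim_{\textup{val}} f^\dag$ always holds at the level of \emph{odd} values, and the obstruction lives entirely at the prime $2$. Concretely, $\textup{Im}(f)$ and $\textup{Im}(f^\dag)$ differ (if at all) only in which multiples of powers of $2$ appear, and the condition that no discrepancy arises is exactly a statement about the $2$-adic behaviour of $f$, which forces $d \equiv 5 \bmod 8$ (so that $2$ is inert in $\OO_d$ and $f$ represents no even numbers primitively in a way that $f^\dag$ misses). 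Under $d \equiv 5 \bmod 8$ one checks directly that $\textup{Im}(f^\dag) \subseteq \textup{Im}(f)$ by an explicit substitution in the spirit of \eqref{dwDW}, and that $\textup{Im}(f) \subseteq \textup{Im}(f^\dag)$ holds if and only if every integer represented by $f$ is also represented by some $\GL$-translate of $f^\dag$ — this is a class-number comparison, and it succeeds precisely when the natural map $\Cl^+(4d) \to \Cl^+(d)$ is a bijection, i.e. $h^+(4d) = h^+(d)$. (When $d \equiv 5 \bmod 8$ the conductor-$2$ exact sequence shows $h^+(4d)/h^+(d) \in \{1, 3\}$ up to the unit index, so this is a real condition.) Conversely, if $d \not\equiv 5 \bmod 8$ or $h^+(4d) \neq h^+(d)$, then either $\textup{Im}(f) \ne \textup{Im}(f^\dag)$ or $f^\dag$ is already $\GL$-equivalent to $f$, so no splitting occurs.

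The third step assembles these into the dichotomy. Given primitive $F$, let $d$ be the smallest discriminant occurring among primitive forms in $[F]_{\textup{val}}$ (it exists by Step 1, since the discriminants in the class are bounded below in absolute value by $|\textup{disc}(F)|/4$ and constrained to a finite set). If there is another $\GL$-class in $[F]_{\textup{val}}$, it must — again by Steps 1 and 2 — be represented by a form of discriminant $d$ or $4d$; two distinct $\GL$-classes of the \emph{same} discriminant $d$ with the same value set would contradict the injectivity of $[\,\cdot\,]_{\GL} \hookrightarrow [\,\cdot\,]_{\textup{val}}$ restricted to a fixed discriminant (this is essentially the indefinite analogue of the genus-theory fact that distinct classes have distinct value sets once the discriminant is fixed, which I would isolate as a lemma), so the second class has discriminant $4d$ and is $[f^\dag]_{\GL}$ for the chosen $f$ of discriminant $d$. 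By Step 2 this forces $d \equiv 5 \bmod 8$ and $h^+(4d) = h^+(d)$, and conversely those two conditions produce exactly the extra class $[f^\dag]_{\GL}$, giving \eqref{split}. Finally I would rule out three or more classes: any third class would again have discriminant in $\{d, 4d, 16d, \dots\}$, but iterating $f \mapsto f^\dag$ a second time lands in discriminant $16d$ where the $2$-adic obstruction reappears (the condition "$16d/4 = 4d \equiv 5 \bmod 8$" fails since $4d \equiv 0 \bmod 4$), so the process stabilizes after one step.

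\textbf{Main obstacle.} The delicate point is Step 2, the equivalence "$\textup{Im}(f) = \textup{Im}(f^\dag) \iff h^+(4d) = h^+(d)$" under $d \equiv 5 \bmod 8$. The forward implication needs a clean description of $\textup{Im}(f)$ that is sensitive enough to detect a missing class but coarse enough to be computable — presumably via representation of primes in each $\SL$-class together with a multiplicativity/composition argument — and the reverse implication requires showing that the conductor-$2$ surjection $\Cl^+(4d) \twoheadrightarrow \Cl^+(d)$ being an isomorphism is exactly what makes every value of $f$ reachable by a $\GL$-translate of $f^\dag$. Controlling the interaction between $\SL$-classes (which the value set does not see directly, since it is a $\GL$-invariant) and the $\GL$-merging $x \sim x^{-1}$ throughout this argument is where the real bookkeeping lies.
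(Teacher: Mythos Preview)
Your architecture is right --- discriminant rigidity (the paper's Theorem \ref{tdD}), a same-discriminant lemma (Lemma \ref{lEqual}), and the characterisation of when $f\sim_{\textup{val}} f^\dag$ (Theorem \ref{tMain}) --- but Step 2 has a real gap. The inclusion $\textup{Im}(f^\dag)\subseteq\textup{Im}(f)$ is trivial since $f^\dag(X,Y)=f(2X,Y)$; no substitution ``in the spirit of \eqref{dwDW}'' is needed. For the hard inclusion $\textup{Im}(f)\subseteq\textup{Im}(f^\dag)$ you invoke bijectivity of $\Cl^+(4d)\to\Cl^+(d)$, but that only tells you which \emph{primes} coprime to $2d$ are represented, and ``same primes'' does not by itself yield ``same value set'' (you acknowledge this in your obstacle paragraph but do not resolve it). The paper's mechanism is the idea you are missing: by the exact sequence \eqref{eRayClass*}, the condition $h^+(d)=h^+(4d)$ is equivalent to the fundamental unit $\epsilon_d$ generating $(\mathcal O_d/2\mathcal O_d)^\ast\cong\FF_4^\ast$. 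Multiplication by $\epsilon_d^{2j}$ is an automorphism of $f$ for every $j$ (see \eqref{eAutf}), and under this unit condition one can choose $j$ so that the automorphism sends any given $(X,Y)$ to a pair with even first coordinate --- hence every value of $f$ is a value of $f^\dag$, for \emph{all} integers at once. Your class-group route never produces this parity-fixing automorphism, and without it the inclusion remains open. (The converse implication the paper does handle via primes, but through the \emph{density} in Lemma \ref{lDensity} combined with $h^+(4d)\in\{h^+(d),3h^+(d)\}$, not a class-by-class comparison.)

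A second issue: you describe the same-discriminant lemma as ``the indefinite analogue of the genus-theory fact that distinct classes have distinct value sets.'' Genus theory cannot separate classes lying in the same genus, which is precisely the case you must handle. The paper's Lemma \ref{lEqual} uses Chebotarev in the ring class field: equal value sets give equal sets of represented primes, hence equal Frobenius conjugacy classes in $\Gal(R/\Q)$, hence $\{[I],[I]^{-1}\}=\{[J],[J]^{-1}\}$ in $\Cl^+(d)$, so $f\sim_{\GL}F$. Finally, your Step 1 is asserted without method; the paper's proof of Theorem \ref{tdD} is a substantial case analysis of \eqref{489} modulo odd prime powers (using the square-count of Lemma \ref{Sta}) and modulo $32$, and already the odd-prime part --- showing $v_q(d)=v_q(D)$ for every odd $q$ --- is not as immediate as ``the obstruction lives at $2$'' suggests.
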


We thank Andrew Earnest for pointing out that Delang Li \cite{Li1} has previously shown a rather similar result to Theorem \ref{F=fUf} for indefinite forms. More precisely, \cite{Li1} proves our result with the condition $h^+(d) = h^+(4d)$ replaced by an equivalent condition on the fundamental unit, see also our Corollary \ref{369}. It is worth pointing out that Li's main result \cite[Theorem 2]{Li1} is stated rather imprecisely. Indeed, using Li's notation, Li seems to forget that both forms $f$ and $g$ may be replaced by $\GL$--equivalent forms. The results in \cite{Li1} were also extended to real coefficients in \cite{Li2}. 

Our later theorems strengthen the results in \cite{Li1} in various ways: in particular by showing that there are infinitely many different extraordinary forms and that being extraordinary depends only on the discriminant (this is clear for $d \equiv 5 \bmod 8$, but much less clear for discriminants of the form $4d$ with $d \equiv 5 \bmod 8$). Moreover, unlike previous works, we are able to treat the definite and indefinite forms uniformly, and we also study coprime value sets in Corollary \ref{cCoprime}.

If $F$ is extraordinary, the class $[F]_{\textup{val}}$ is the union of exactly two distinct classes $\sim_{\GL}$ by Theorem \ref{F=fUf}. One of these classes is $[F]_{\GL}$, by \eqref{inclusion}. Any element $G$ of the second class is also extraordinary, and we call $(F,G)$ a {\it pair of associated extraordinary forms}. Of course, for any integer $c\ne 0$, for any $\gamma$ and $\gamma'\in \GL$ the pair $(cF\circ \gamma, cG\circ \gamma')$ is also a pair of associated extraordinary forms. Finally, we deduce from \eqref{split} that for any pair $(F,G)$ of associated primitive extraordinary forms, we have 
\begin{equation}
\label{1/4,4}
{\rm disc \,}F/ {\rm disc\, } G \in \{1/4,\, 4\}.
\end{equation}
We follow the convention to write a pair of associated extraordinary forms under the form $(F, G)$ with $\text{disc } G = 4 \cdot \text{disc } F$. Then $F$ is called a {\it lower} extraordinary form, and $G$ an {\it upper} extraordinary form. By $([F]_{\GL}, [G]_{\GL})$ we denote the pair of $\sim_{\GL}$ classes respectively containing $F$ and $G$ and we say that $([F]_{\GL}, [G]_{\GL})$ is a {\it pair of associated extraordinary classes}.

Our second result expands on Theorem \ref{F=fUf} by giving a rather easy criterion to exhibit extraordinary forms. It also shows a complete difference between definite and indefinite quadratic forms. This difference can be imputed to the dissimilarity of the structure of the set of invertible elements in the corresponding orders.

\begin{theorem}
\label{second}
According to the sign of the discriminants of quadratic forms we have:
\begin{enumerate}
\item[(1)] There are exactly two pairs of primitive associated extraordinary classes with negative discriminants: $([{\rm dw}]_{\GL}, [{\rm DW}]_{\GL})$ and 
$([{\rm -dw}]_{\GL}, [-{\rm DW}]_{\GL})$.
\item[(2)] Let $f$ be a primitive quadratic form with a non square positive discriminant $d\equiv 1 \bmod 4$. 
\begin{enumerate}
\item[a.] If $d\equiv 5 \bmod 8$ and if $h^+(d) = h^+(4d)$, then $f$ is lower extraordinary with associated upper extraordinary form $f^\dag$. In particular, every (primitive) form with discriminant $d$ is lower extraordinary.
\item [b.] If $d\equiv 1 \bmod 8$ or if $h^+(d) \neq h^+(4d)$, then $f$ is ordinary. In particular, every (primitive) form with discriminant $d$ is ordinary.
\end{enumerate}
\item[(3)] Let $f$ be a primitive quadratic form with a non square positive discriminant $d\equiv 0 \bmod 4$. 
\begin{enumerate}
\item [a.] If $d\equiv 20 \bmod 32$ and if $h^+(d) = h^+(d/4)$, then $f$ is upper extraordinary and there exists $\gamma \in \GL$ such that $(f\circ \gamma)(X/2,Y)$ has integer coefficients and such that $((f\circ \gamma)(X/2, Y), f(X, Y))$ is a pair of associated primitive extraordinary forms. In particular, every (primitive) form with discriminant $d$ is upper extraordinary.
\item [b.] If $d \bmod 32 \in \{0, 4, 8, 12, 16, 24, 28\}$ or if $h^+(d) \neq h^+(d/4)$, the form $f$ is ordinary. In particular, every (primitive) form with discriminant $d$ is ordinary.
\end{enumerate}
\end{enumerate}
\end{theorem}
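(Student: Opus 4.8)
The plan is to deduce all three statements from Theorem~\ref{F=fUf} and the discriminant relation \eqref{1/4,4}, so that the only new ingredients are (i) a comparison of the class numbers of $\OO_d$ and $\OO_{4d}$ for $d\equiv 5\bmod 8$, $d<0$, and (ii) an elementary ``halving'' statement for forms of discriminant $4m$ with $m\equiv 5\bmod 8$. Throughout I use two standing facts: if $F_1\sim_{\textup{val}}F_2$ then $F_1$ and $F_2$ are simultaneously primitive (Lemma~\ref{lContent}), and their discriminants have the same sign (a definite and an indefinite form cannot have the same value set). I begin with part~(2): let $F$ be primitive with non-square discriminant $d\equiv 1\bmod 4$. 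By Theorem~\ref{F=fUf}, $F$ is extraordinary if and only if there is $g\sim_{\textup{val}}F$ with $\mathrm{disc}\,g\equiv 5\bmod 8$ and $h^+(\mathrm{disc}\,g)=h^+(4\,\mathrm{disc}\,g)$, and then $[F]_{\textup{val}}=[g]_{\GL}\cup[g^\dag]_{\GL}$. Since $\mathrm{disc}\,g^\dag=4\,\mathrm{disc}\,g$ and $F$ lies in one of these two classes (cf.~\eqref{1/4,4}), we get $\mathrm{disc}\,g\in\{d,\,d/4\}$, hence $\mathrm{disc}\,g=d$ as $d$ is odd. Thus if $d\equiv 5\bmod 8$ and $h^+(d)=h^+(4d)$ the choice $g=F$ is admissible: $F$ is extraordinary with partner $F^\dag$, and since $\mathrm{disc}\,F^\dag=4d\neq d$ these are distinct $\GL$-classes, so $F$ is lower; whereas if $d\equiv 1\bmod 8$ or $h^+(d)\neq h^+(4d)$, no admissible $g$ exists and $F$ is ordinary. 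The alternative depends only on $d$, which yields the ``in particular'' clauses, proving (2a) and (2b).

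For part~(3), let $F$ be primitive with non-square discriminant $d\equiv 0\bmod 4$. As in (2), if $F$ is extraordinary then $[F]_{\textup{val}}=[g]_{\GL}\cup[g^\dag]_{\GL}$ with $\mathrm{disc}\,g\equiv 5\bmod 8$; since $d$ is even, $F$ cannot lie in $[g]_{\GL}$, so $F\sim_{\GL}g^\dag$ and $\mathrm{disc}\,g=d/4$. Hence $d/4$ must be an integer $\equiv 5\bmod 8$, i.e.\ $d\equiv 20\bmod 32$, and the class-number condition $h^+(\mathrm{disc}\,g)=h^+(4\,\mathrm{disc}\,g)$ reads $h^+(d/4)=h^+(d)$; this gives (3b) in all the listed cases. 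For (3a), with $d=4m$, $m\equiv 5\bmod 8$ and $h^+(m)=h^+(4m)$, the crucial input is: \emph{every primitive form of discriminant $4m$ with $m\equiv 5\bmod 8$ is $\GL$-equivalent to $g(2X,Y)$ for some primitive form $g$ of discriminant $m$.} One proves this by replacing $F$ with a $\GL$-equivalent form representing an integer coprime to $2m$ and then, after possibly applying $X\mapsto X+Y$, normalising modulo~$8$ to reach the shape $4a'X^2+2b'XY+c'Y^2$ with $a',b',c'$ odd, so that $F\sim_{\GL}g^\dag$ with $g=a'X^2+b'XY+c'Y^2$, $\mathrm{disc}\,g=m$, primitive. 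Granting this, pick $\gamma\in\GL$ with $F\circ\gamma=g(2X,Y)$; then $(F\circ\gamma)(X/2,Y)=g(X,Y)$ has integer coefficients, and by (2a) $g$ is lower extraordinary with $[g]_{\textup{val}}=[g]_{\GL}\cup[g^\dag]_{\GL}$. Since $F\sim_{\GL}g^\dag$ we obtain $[F]_{\textup{val}}=[g]_{\textup{val}}$, two distinct $\GL$-classes with $\mathrm{disc}\,g=\tfrac14\,\mathrm{disc}\,F$, so $F$ is upper extraordinary and $\big((F\circ\gamma)(X/2,Y),\,F(X,Y)\big)$ is a pair of associated primitive extraordinary forms. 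As $F$ was arbitrary of discriminant $d$, (3a) follows.

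For part~(1), take $d<0$, so $h^+=h$. By the above, a primitive form of negative discriminant is extraordinary exactly when it is $\sim_{\textup{val}}$ to a primitive form whose discriminant $d'$ satisfies $d'<0$, $d'\equiv 5\bmod 8$ and $h(d')=h(4d')$. The standard formula relating the class numbers of an order and its index-$2$ suborder (see \cite{Cox}) gives, using $\leg{d'}{2}=-1$, the relation $h(4d')=3\,h(d')/[\OO_{d'}^\times:\OO_{4d'}^\times]$; among negative $d'\equiv 5\bmod 8$ the unit index exceeds $1$ only for $d'=-3$, where it equals $3$ (as $\OO_{-3}^\times$ has order $6$ while $\OO_{-12}^\times=\{\pm1\}$). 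So $d'=-3$ is the unique possibility, and the primitive extraordinary forms of negative discriminant are precisely those $\sim_{\textup{val}}$ to a primitive form of discriminant $-3$. There are exactly two $\GL$-classes of such forms, $[{\rm dw}]_{\GL}$ and $[-{\rm dw}]_{\GL}$ (since $h(-3)=1$ and $F\mapsto -F$ interchanges the positive- and negative-definite classes), and for each the corresponding value class splits into its lower class (discriminant $-3$) and its upper class (discriminant $-12$); by \eqref{208} together with the identity $(-F)^\dag=-(F^\dag)$ these are the pairs $([{\rm dw}]_{\GL},[{\rm DW}]_{\GL})$ and $([-{\rm dw}]_{\GL},[-{\rm DW}]_{\GL})$, which are clearly distinct.

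I expect the main obstacle to be the halving lemma needed for (3a): it is precisely what upgrades ``some form of discriminant $4m$ is upper extraordinary'' — which is already visible from (2a) — to ``\emph{every} form of discriminant $4m$ is upper extraordinary'', and it simultaneously produces the matrix $\gamma$ in the statement. By contrast the residue bookkeeping (checking that the list $\{0,4,8,12,16,24,28\}\bmod 32$ is exactly the complement of $20\bmod 32$ inside $d\equiv 0\bmod 4$) and the unit-index computation in part~(1) are routine, though they must be carried out with care.
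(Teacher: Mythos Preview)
Your proof is correct. Parts (1), (2) and (3b) follow the paper's route: everything is reduced to Theorem~\ref{F=fUf} and the observation that any admissible $g$ has $\mathrm{disc}\,g\in\{d,d/4\}$, and your treatment of negative discriminants via the index $[\OO_{d'}^\times:\OO_{4d'}^\times]$ is exactly the paper's Propositions~\ref{894}--\ref{-3unique}.

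The one genuine difference is in (3a). The paper argues by counting: from $h^+(d/4)=h^+(d)$ and the exact sequence~\eqref{eRayClass*} it gets $\Cl^+(d/4)\cong\Cl^+(d)$, hence $h^\star(d/4)=h^\star(d)$; it then shows the map $\Phi\colon[g]_{\GL}\mapsto[g^\dag]_{\GL}$ is a well-defined injection between finite sets of equal size, so a bijection, and surjectivity furnishes the $\gamma$. Your halving lemma instead establishes surjectivity directly by an elementary normal-form argument modulo $8$, and (notably) uses only the congruence $m\equiv 5\bmod 8$, not the class-number hypothesis. This is a cleaner separation of concerns: in your argument the hypothesis $h^+(d/4)=h^+(d)$ enters only once, through the appeal to (2a) making $g$ extraordinary, whereas the paper invokes it both for the cardinality match and, via Theorem~\ref{F=fUf}, for well-definedness and injectivity of $\Phi$. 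Two small cosmetic points: the substitution that fixes the odd coefficient $c'$ while flipping the parity of $b'$ is $Y\mapsto Y+X$ rather than $X\mapsto X+Y$; and you only need $c'$ odd (one of $F(1,0)$, $F(0,1)$, $F(1,1)$ already is), not coprime to $m$.
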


\noindent In \S \ref{infinitely} we will study the frequency of these different cases.

\subsection{A first list of remarks}
The following corollary is a consequence of \eqref{1/4,4}. However, it will be directly proved in Lemma \ref{lEqual} below.

\begin{corollary} 
\label{rain}
Let $F$ and $G$ be two forms such that their non square discriminants satisfy ${\rm disc\,} F ={\rm disc\,} G$ and such that $F\sim_{\rm val} G$. Then
we have $F\sim_{\GL} G$.
\end{corollary}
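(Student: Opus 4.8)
The plan is to derive the statement from Theorem~\ref{F=fUf} together with the relation~\eqref{1/4,4}, after a short reduction to primitive forms.

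First I would reduce to the case where $F$ and $G$ are primitive. Let $m$ denote the content of $F$; since the content is an invariant of the class $[\cdot]_{\mathrm{val}}$ (this is Lemma~\ref{lContent}), the form $G$ has the same content $m$. Writing $F = mF_0$ and $G = mG_0$ with $F_0,G_0$ primitive, one has $\mathrm{Im}(F_0) = m^{-1}\,\mathrm{Im}(F) = m^{-1}\,\mathrm{Im}(G) = \mathrm{Im}(G_0)$, so $F_0 \sim_{\mathrm{val}} G_0$, and likewise ${\rm disc}\,F_0 = m^{-2}\,{\rm disc}\,F = m^{-2}\,{\rm disc}\,G = {\rm disc}\,G_0$, which is still non-square. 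Because $H \mapsto mH$ induces a bijection on $\GL$-orbits, it would then suffice to prove $F_0 \sim_{\GL} G_0$; so I may assume $F$ and $G$ primitive.

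Next I would apply Theorem~\ref{F=fUf} to $F$. If $[F]_{\mathrm{val}}$ is a single $\GL$-class, then $G \in [F]_{\mathrm{val}} = [F]_{\GL}$ and we are done. Otherwise $[F]_{\mathrm{val}}$ is the union of exactly two $\GL$-classes, and $F$ is (lower or upper) extraordinary; writing $(F',G')$ for an associated pair of primitive extraordinary forms whose $\GL$-classes are precisely the two classes making up $[F]_{\mathrm{val}}$, relation~\eqref{1/4,4} gives ${\rm disc}\,F'/{\rm disc}\,G' \in \{1/4,4\}$. Hence the two $\GL$-classes inside $[F]_{\mathrm{val}}$ have distinct (nonzero) discriminants. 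Since $F$ and $G$ both lie in $[F]_{\mathrm{val}}$ and have equal discriminant, they must lie in the same one of these two classes, i.e. $F \sim_{\GL} G$.

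I do not expect a serious obstacle: the corollary is essentially a bookkeeping consequence of results already in hand. The one place to be slightly careful is the reduction to primitive forms, which genuinely uses that the content is a $\sim_{\mathrm{val}}$-invariant (Lemma~\ref{lContent}); after that, \eqref{1/4,4} does all the work, forcing the two $\GL$-classes in an extraordinary value-set class to be separated by their discriminants. If one prefers to avoid quoting \eqref{1/4,4}, one can instead use the explicit splitting \eqref{split}: since $f^\dag(X,Y) = f(2X,Y)$ has discriminant $4\,{\rm disc}\,f$, the classes $[f]_{\GL}$ and $[f^\dag]_{\GL}$ are already distinguished by their discriminants, and the same conclusion follows.
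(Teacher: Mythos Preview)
Your proof is correct and follows exactly the route the paper itself indicates just before the statement: Corollary~\ref{rain} is ``a consequence of~\eqref{1/4,4}'', and your argument spells this out, including the necessary reduction to primitive forms via Lemma~\ref{lContent}. The paper also points to a more self-contained alternative, Lemma~\ref{lEqual}, which proves the primitive case directly from the ring class field and Chebotarev (without invoking the full Theorem~\ref{F=fUf}); after your primitive reduction, that lemma yields the conclusion in one line and avoids any appearance of circularity, since Lemma~\ref{lEqual} is an ingredient in the proof of Theorem~\ref{F=fUf} rather than a consequence of it.
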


In the following corollary, we generalize Theorem \ref{F=fUf} to the case where $F$ is not necessarily primitive.

\begin{corollary}
Let $F$ be a binary quadratic form with integer coefficients, written as in \eqref{defF}, such that its discriminant is not a square. Let $\Delta := \gcd( a, b, c)$. Then $[F]_{\textup{val}}$ breaks either into one or into two equivalence classes under $\GL$. It breaks into two equivalence classes if and only if there exists $f$ such that $f \sim_{\textup{val}} F$ and such that the discriminant $d$ of $f$ satisfies $d/\Delta^2 \equiv 5 \bmod 8$ and $h^+(d/\Delta^2) = h^+(4d/\Delta^2)$. In this case we have a decomposition
$$
[F]_{\textup{val}} = [f]_{\GL} \cup [f^\dag]_{\GL}.
$$
Every pair of $\GL$--classes of associated extraordinary forms has the shape
$$
\bigl([F]_{\GL}, [F^\dag]_{\GL}\bigr),
$$
where ${\rm disc} \, F$ satisfies the above properties.
\end{corollary}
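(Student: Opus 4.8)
The plan is to reduce everything to the primitive case, Theorem~\ref{F=fUf}, by stripping off the content. Put $\Delta = \gcd(a,b,c)$ and $F_0 := F/\Delta$; this is a primitive form with integer coefficients, and its discriminant ${\rm disc}(F_0) = {\rm disc}(F)/\Delta^2$ is again not a square. I would first record the behaviour of the map $H \mapsto \Delta H$. (i) Since the $\GL$-action preserves the content and $(\Delta H)^\dag = \Delta\,(H^\dag)$ as an identity of forms, $[H]_\GL \mapsto [\Delta H]_\GL$ is a bijection from the $\GL$-classes of primitive forms onto the $\GL$-classes of forms of content~$\Delta$, with $\Delta H_1 \sim_\GL \Delta H_2 \iff H_1 \sim_\GL H_2$. (ii) By Lemma~\ref{lContent}, $\gcd(\textup{Im}(H))$ equals the content of $H$ for every form $H$; combined with $\textup{Im}(\Delta H) = \Delta\cdot\textup{Im}(H)$ this shows that any $G$ with $G \sim_{\textup{val}} \Delta H$ has content~$\Delta$, so $G = \Delta H'$ with $H'$ integral and primitive, and dividing $\textup{Im}(G) = \textup{Im}(\Delta H)$ by the nonzero integer~$\Delta$ yields $H' \sim_{\textup{val}} H$. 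Hence $[H]_{\textup{val}} \mapsto [\Delta H]_{\textup{val}}$ is a bijection from the $\textup{val}$-classes of primitive forms onto the $\textup{val}$-classes of forms of content~$\Delta$, and it carries the partition of $[H]_{\textup{val}}$ into $\GL$-classes to the analogous partition of $[\Delta H]_{\textup{val}}$, the class $[H']_\GL$ corresponding to $[\Delta H']_\GL$.

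Granting (i) and (ii), the first two assertions are obtained by transporting Theorem~\ref{F=fUf} for $F_0$ through $H \mapsto \Delta H$. The number of $\GL$-classes in $[F]_{\textup{val}}$ equals that in $[F_0]_{\textup{val}}$, hence is $1$ or $2$; and it is $2$ precisely when $[F_0]_{\textup{val}}$ has two $\GL$-classes, i.e.\ precisely when there is a primitive $f_0 \sim_{\textup{val}} F_0$ whose discriminant $d_0$ satisfies $d_0 \equiv 5 \bmod 8$ and $h^+(d_0) = h^+(4 d_0)$. Writing $f := \Delta f_0$ — an integral form of content~$\Delta$ with $f \sim_{\textup{val}} F$ and ${\rm disc}(f) = \Delta^2 d_0 =: d$ — this condition becomes $d/\Delta^2 \equiv 5 \bmod 8$ and $h^+(d/\Delta^2) = h^+(4 d/\Delta^2)$; conversely any integral $f \sim_{\textup{val}} F$ with these two properties has content~$\Delta$ by Lemma~\ref{lContent}, so $f_0 := f/\Delta$ is an admissible witness for $F_0$. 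When this holds, \eqref{split} for $F_0$ reads $[F_0]_{\textup{val}} = [f_0]_\GL \cup [f_0^\dag]_\GL$, and applying (i)--(ii) together with the identity $(\Delta f_0)^\dag = \Delta f_0^\dag = f^\dag$ gives $[F]_{\textup{val}} = [f]_\GL \cup [f^\dag]_\GL$.

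For the last assertion, let $(\mathcal C,\mathcal D)$ be a pair of associated extraordinary $\GL$-classes with $\mathcal C$ the lower one, choose $F' \in \mathcal C$, and let $\Delta$ be its content. Then $F'$ is extraordinary, so the part just proved gives $[F']_{\textup{val}} = \mathcal C \cup \mathcal D = [f]_\GL \cup [f^\dag]_\GL$ for some integral $f \sim_{\textup{val}} F'$ whose discriminant $d$ satisfies $d/\Delta^2 \equiv 5 \bmod 8$ and $h^+(d/\Delta^2)=h^+(4d/\Delta^2)$. Since ${\rm disc}(f^\dag) = 4\,{\rm disc}(f)$ and $d = {\rm disc}(f) \neq 0$ (because $d/\Delta^2 \equiv 5 \bmod 8$), the class $[f]_\GL$ is the lower member of the pair $\{[f]_\GL,[f^\dag]_\GL\}$, so $\mathcal C = [f]_\GL$ and $\mathcal D = [f^\dag]_\GL$; taking $F := f$ then exhibits $(\mathcal C,\mathcal D) = ([F]_\GL, [F^\dag]_\GL)$ with ${\rm disc}(F)$ as prescribed. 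The only input that is not pure bookkeeping is Lemma~\ref{lContent}, namely that $\gcd(\textup{Im}(H))$ equals the content of $H$ — which for a primitive $H = \alpha X^2 + \beta XY + \gamma Y^2$ follows from $\gcd(H(1,0), H(1,1), H(0,1)) = \gcd(\alpha,\beta,\gamma) = 1$ — and I do not anticipate any genuine obstacle beyond verifying carefully that $H \mapsto \Delta H$ is indeed bijective on both families of classes and compatible with $\sim_\GL$, $\sim_{\textup{val}}$ and the $\dag$-operation.
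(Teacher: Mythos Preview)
Your proposal is correct and is exactly the argument the paper intends: the corollary is stated in the paper without proof, as an immediate consequence of Theorem~\ref{F=fUf} obtained by dividing out the content, applying the primitive case, and multiplying back by~$\Delta$. Your write-up simply makes explicit the bookkeeping (that $H\mapsto \Delta H$ is bijective on both $\GL$- and $\sim_{\rm val}$-classes via Lemma~\ref{lContent}, and commutes with~$\dag$) that the paper leaves to the reader.
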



\begin{remark} 
The first item of Theorem \ref{second} is only a reformulation of Theorem \ref{starting}. The items (2) and (3) investigate all the the possible values of the positive discriminant $d\equiv 0$ or $1 \bmod 4$. Thus the property for a form to be lower or upper extraordinary, or ordinary is completely determined by the value of its discriminant. It is interesting to compare this with the situation of binary forms with degree $d \geq 3$, see \cite{FoKo1, FoKo2, FoKo3}.
\end{remark}

 
Theorem \ref{second} allows us to deduce the following corollary by looking at the value sets of the principal form, and observing that two different fundamental discriminants can never differ by a factor of $4$.
 
\begin{corollary}
Let $K$ and $K'$ be two quadratic fields such that the set of norms of the integral elements of $K$ and $K'$ coincide. Then $K = K'$.
\end{corollary}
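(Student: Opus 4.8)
The plan is to realize the set of norms of integral elements of a quadratic field as the value set of an explicit primitive binary quadratic form, and then feed this into Theorem~\ref{F=fUf} and \eqref{1/4,4}. Write $K = \mathbb{Q}(\sqrt{D})$ with $D$ squarefree and let $d_K$ be the discriminant of $K$, so $d_K = D$ if $D \equiv 1 \bmod 4$ and $d_K = 4D$ otherwise; since $K$ is a genuine quadratic field, $d_K$ is not a square. Picking the standard integral basis $1, \omega$ of $\mathcal{O}_K$ (with $\omega = \sqrt{D}$ or $\omega = (1+\sqrt{D})/2$ according to the class of $D$ mod $4$), a one-line computation gives, for $\alpha = x + y\omega \in \mathcal{O}_K$, the identity $\Norm(\alpha) = P_{d_K}(x, y)$, where $P_{d_K}$ is the principal form of discriminant $d_K$: namely $P_{d_K}(X, Y) = X^2 - (d_K/4) Y^2$ when $d_K \equiv 0 \bmod 4$ and $P_{d_K}(X, Y) = X^2 + XY + ((1 - d_K)/4) Y^2$ when $d_K \equiv 1 \bmod 4$. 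In either case $P_{d_K}$ is primitive. Thus the hypothesis of the corollary is precisely $\text{Im}(P_{d_K}) = \text{Im}(P_{d_{K'}})$, i.e. $P_{d_K} \sim_{\text{val}} P_{d_{K'}}$.

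I would then argue by contradiction, assuming $K \ne K'$, hence $d_K \ne d_{K'}$. Since $\GL$--equivalent forms have equal discriminant, $P_{d_K} \not\sim_{\GL} P_{d_{K'}}$, so the class $[P_{d_K}]_{\text{val}}$ contains two distinct $\GL$--classes. By Theorem~\ref{F=fUf} it contains exactly two, and $(P_{d_K}, P_{d_{K'}})$ is, after possibly swapping the two forms, a pair of associated primitive extraordinary forms; by \eqref{1/4,4} this forces $d_K / d_{K'} \in \{1/4, 4\}$, i.e. one of $d_K, d_{K'}$ is four times the other.

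Finally I would rule this out by the elementary observation that two distinct fundamental discriminants never differ by a factor of $4$. Indeed, a fundamental discriminant $d$ is either squarefree with $d \equiv 1 \bmod 4$, or equals $4e$ with $e$ squarefree and $e \equiv 2, 3 \bmod 4$. In the first case $4d/4 = d \equiv 1 \bmod 4$ violates the congruence required of the squarefree part, so $4d$ is not a fundamental discriminant; in the second case $4d = 16e$ and $16e/4 = 4e$ is not squarefree, so again $4d$ is not a fundamental discriminant. Applying this with $d = d_K$ contradicts the previous paragraph, so $d_K = d_{K'}$ and $K = K'$. The only step that genuinely requires attention is this last arithmetic fact, since it is exactly what excludes the ``extraordinary'' alternative; everything else is either a routine computation or a direct appeal to the results already established. (It is worth remarking that a real and an imaginary quadratic field automatically have distinct norm sets — one contains negative integers, the other does not — but this case is subsumed above, as $d_K$ and $d_{K'}$ must then share the same sign.)
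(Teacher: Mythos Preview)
Your proof is correct and follows essentially the same approach as the paper's: identify the norm set with the value set of the principal form of discriminant $d_K$, invoke the main classification (the paper cites Theorem~\ref{second}, you cite Theorem~\ref{F=fUf} and \eqref{1/4,4}, which amounts to the same thing) to force the two discriminants to differ by a factor of $4$, and then observe that two distinct fundamental discriminants can never be in ratio $4$. Your write-up is more detailed than the paper's one-line sketch, and the case analysis for the last arithmetic fact is clean and correct.
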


\begin{remark} 
We state our results with the help of the quadratic form $f^\dag $. Of course they remain true if one replaces $f^\dag$ by $f^\ddag$, defined in \eqref{fdag}. In particular, suppose that $d$ satisfies the conditions of Theorem \ref{second} (2).a, we deduce that
$$
f \sim_{\rm val} f^\dag, \quad f \sim_{\rm val} f^\ddag,
$$
which implies that $f^\dag \sim_{\rm val} f^\ddag$. Since these forms have the same discriminant, they are $\GL$--equivalent by Theorem \ref{F=fUf}.
\end{remark}

\subsection{More advanced remarks and consequences} 
The equality $h^+(d) = h^+(4d)$ plays a central role in Theorems \ref{F=fUf} and \ref{second}. The following corollaries give another form to these theorems appealing to the fundamental unit $\varepsilon_d$ of the order $\mathcal O_d$ or to the ordinary class number $h$.

\begin{corollary}
\label{369}
Let $d$ be a non square discriminant. Choose a decomposition of $\mathcal O_d$ as $\mathcal O_d = \Z \oplus \Z \omega_d$. Let $\epsilon_d$ be a fundamental unit of $\mathcal O_d$, and write it as
$$
\epsilon_d = x_d + y_d \omega_d \quad (x_d, y_d \in \Z).
$$
Then Theorems \ref{F=fUf} and \ref{second} remain true if one replaces the condition $h^+(d) = h^+(4d)$ by the condition $2 \nmid y_d$.
\end{corollary}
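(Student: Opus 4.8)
The plan is to reduce Corollary \ref{369} to a purely local statement about the $2$-adic behaviour of the fundamental unit, and then to translate the condition $h^+(d) = h^+(4d)$ into the condition $2 \nmid y_d$ by analysing the natural surjection $\Cl^+(4d) \twoheadrightarrow \Cl^+(d)$. First I would recall that, for a non-square discriminant $d$, the order $\OO_{4d}$ is the suborder of $\OO_d$ of conductor $2$, and that there is a short exact sequence
\begin{equation}
\label{369ses}
1 \longrightarrow \OO_d^{\ast}/\OO_{4d}^{\ast} \cdot \{\pm 1\}^{?} \longrightarrow \left(\OO_d/2\OO_d\right)^{\ast} \big/ \left(\Z/2\Z\right)^{\ast} \longrightarrow \Cl^+(4d) \longrightarrow \Cl^+(d) \longrightarrow 1,
\end{equation}
the standard conductor formula relating the narrow class groups of an order and its suborder (see \cite[Ch.~2, \S 7]{Cox} or \cite[Prop.~7.20 ff.]{Cox}, adapted to the narrow setting). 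The group $\left(\OO_d/2\OO_d\right)^{\ast}$ has order dividing $3$ (it is $\Z/3\Z$, trivial, or the Klein four group quotient depending on $d \bmod 8$), so that $h^+(4d)/h^+(d) \in \{1, 2, 3\}$; the ratio is $1$ precisely when the image of the unit group $\OO_d^{\ast}$ in $\left(\OO_d/2\OO_d\right)^{\ast}$ surjects onto it after quotienting by $\pm 1$.

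Next I would make the generator $\ve_d$ explicit. Writing $\OO_d = \Z \oplus \Z\omega_d$ with $\omega_d$ a root of $X^2 - X + (1-d)/4$ when $d \equiv 1 \bmod 4$ and $X^2 - d/4$ when $d \equiv 0 \bmod 4$, the reduction of $\ve_d = x_d + y_d\omega_d$ modulo $2\OO_d$ depends only on the parities of $x_d$ and $y_d$. Since $\Norm(\ve_d) = \pm 1$ forces $x_d$ and $y_d$ not both even, the only question is whether $2 \mid y_d$. If $2 \mid y_d$ then $\ve_d \equiv x_d \equiv 1 \bmod 2\OO_d$ (as $2 \nmid x_d$), so the image of $\OO_d^{\ast}$ in $\left(\OO_d/2\OO_d\right)^{\ast}$ is just $\{\pm 1\}$, which is trivial after quotienting; hence the connecting map in \eqref{369ses} is injective and $h^+(4d) = h^+(d) \cdot \#\bigl(\left(\OO_d/2\OO_d\right)^{\ast}/\{\pm1\}\bigr) > h^+(d)$ unless that unit group is already trivial — and here one uses that the conditions of Theorems \ref{F=fUf} and \ref{second} already impose $d \equiv 5 \bmod 8$ (resp.\ $d \equiv 20 \bmod 32$), for which $\left(\OO_d/2\OO_d\right)^{\ast} \cong \Z/3\Z$ is nontrivial. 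Conversely, if $2 \nmid y_d$, then $\ve_d$ reduces to an element of $\left(\OO_d/2\OO_d\right)^{\ast}$ of order $3$, which together with $-1$ generates the whole group, so the connecting map is zero and $h^+(4d) = h^+(d)$.

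The main obstacle I anticipate is bookkeeping of the several residue cases for $d \bmod 8$ (or $d \bmod 32$ in the $4 \mid d$ branch) and, in tandem, keeping the narrow versus ordinary distinction straight in \eqref{369ses}: the factor $\{\pm 1\}$ has to be handled carefully because a totally positive unit versus a unit of norm $-1$ changes which principal ideals become trivial in $\Cl^+$. One clean way to sidestep part of this is to note that $h^+(4d) = h^+(d)$ is equivalent to $\OO_d^{\ast}$ and $\OO_{4d}^{\ast}$ having the same index in their respective totally positive parts \emph{and} the unit norm being unchanged, which for $d \equiv 5 \bmod 8$ reduces exactly to asking that a generator of $\OO_d^{\ast}/\{\pm1\}$ already lies in $\OO_{4d}^{\ast}$, i.e.\ that $\ve_d \in \Z + 2\Z\omega_d = \OO_{4d}$ modulo sign, i.e.\ $2 \mid y_d$ or $2 \mid (x_d, y_d-1)$ — and since $\ve_d^k$ running over powers cycles the residue of $y_d$ through a coset of order $3$, being in $\OO_{4d}$ at \emph{some} power is automatic, while being there at the fundamental power is exactly $2 \nmid y_d$ after replacing $\ve_d$ by $\ve_d^{-1}$ or $-\ve_d$ if needed. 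Once this local computation is pinned down, Corollary \ref{369} follows by substituting the equivalence $h^+(d) = h^+(4d) \iff 2 \nmid y_d$ (and the analogous $h^+(d) = h^+(d/4) \iff 2 \nmid y_{d/4}$ in part (3), obtained by applying the same analysis with the roles of $d$ and $d/4$ exchanged) into the statements of Theorems \ref{F=fUf} and \ref{second} verbatim.
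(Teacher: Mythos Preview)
Your approach is the same as the paper's: the proof of Corollary~\ref{369} is reduced to Lemma~\ref{midnight}, which in turn is deduced from the exact sequence \eqref{eRayClass*}
\[
0 \longrightarrow \frac{(\OO_d/2\OO_d)^\ast}{\mathrm{im}\,\OO_d^\ast} \longrightarrow \Cl^+(\OO_{4d}) \longrightarrow \Cl^+(\OO_d) \longrightarrow 0,
\]
valid for $d\equiv 5\bmod 8$. Your second paragraph carries this out correctly: since $(\OO_d/2\OO_d)^\ast\cong\FF_4^\ast\cong\Z/3\Z$ and $-1\equiv 1\bmod 2\OO_d$, the image of $\OO_d^\ast$ is trivial exactly when $\ve_d\equiv 1\bmod 2\OO_d$, i.e.\ when $2\mid y_d$; hence $h^+(d)=h^+(4d)\iff 2\nmid y_d$.

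However, your third paragraph should be discarded. There you assert that $h^+(4d)=h^+(d)$ is equivalent to $\ve_d\in\OO_{4d}=\Z+2\Z\omega_d$, i.e.\ to $2\mid y_d$. This is the reverse of the truth and contradicts your own second paragraph: if $\ve_d\in\OO_{4d}$ then its image in $(\OO_d/2\OO_d)^\ast$ is trivial, the kernel in the exact sequence has order~$3$, and $h^+(4d)=3h^+(d)$. The subsequent sentence (``being there at the fundamental power is exactly $2\nmid y_d$'') compounds the confusion, and the attempted fix via replacing $\ve_d$ by $\pm\ve_d^{\pm1}$ changes nothing, since these replacements preserve the parity of $y_d$. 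Also tidy up your displayed sequence \eqref{369ses}: drop the ``$\{\pm1\}^{?}$'' and the quotient by $(\Z/2\Z)^\ast$ (which is trivial), and restrict from the outset to $d\equiv 5\bmod 8$ as the paper does---this is the only case relevant to Theorems~\ref{F=fUf} and~\ref{second}, and it renders the side remarks about ``Klein four group'' (which cannot have order dividing~$3$) moot.
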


\noindent The proof will be given in Lemma \ref{midnight}. We also have the following variant.

\begin{corollary}
\label{corollary1.11}
Theorems \ref{F=fUf} and \ref{second} remain true if one replaces the condition $h^+(d) = h^+(4d)$ by the condition $h(d) = h(4d)$. 
\end{corollary}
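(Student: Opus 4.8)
The plan is to reduce the replacement of the condition $h^+(d) = h^+(4d)$ by $h(d) = h(4d)$ to a purely arithmetic comparison of the two ratios $h^+/h$ for the orders $\OO_d$ and $\OO_{4d}$, using the general formula relating these class numbers to the index of the unit groups. Recall that, for a non-square discriminant $d$, one has the exact sequence relating $\Cl^+(\OO)$ and $\Cl(\OO)$, so that $h^+(d)/h(d) \in \{1, 2\}$, with the value $2$ occurring precisely when $d > 0$ and the fundamental unit $\ve_d$ has norm $+1$, and the value $1$ occurring when $d < 0$ or when $\Norm(\ve_d) = -1$.

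First I would dispose of the case $d < 0$: here $h^+ = h$ for every order, so the conditions $h^+(d) = h^+(4d)$ and $h(d) = h(4d)$ are literally identical, and there is nothing to prove. So assume $d > 0$. The key point is the relationship between the fundamental units of $\OO_d$ and $\OO_{4d} = \Z + 2\OO_d$ (the suborder of conductor $2$). Writing $\ve_d = x + y\omega_d$ as in Corollary \ref{369}, the fundamental unit $\ve_{4d}$ of $\OO_{4d}$ is $\ve_d^{k}$, where $k$ is the smallest positive integer with $\ve_d^k \in \Z + 2\OO_d$; concretely $k \in \{1, 2, 3\}$, and $k = 1$ exactly when $2 \mid y_d$. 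Moreover $\Norm(\ve_{4d}) = \Norm(\ve_d)^k$. I would then run through the possibilities. If $\Norm(\ve_d) = -1$, then $h^+(d) = h(d)$; here $2 \nmid y_d$ (this is a standard fact: a unit of norm $-1$ cannot lie in a proper suborder unless... — more precisely one checks that $\Norm(\ve_d) = -1$ forces $\ve_d \notin \Z + 2\OO_d$, equivalently $2 \nmid y_d$, using that $\OO_d/2\OO_d$ has no element of norm $-1 \equiv 1$ of the required shape), so $k$ is odd and $\Norm(\ve_{4d}) = -1$ as well, giving $h^+(4d) = h(4d)$. Thus both conditions $h^+(d) = h^+(4d)$ and $h(d) = h(4d)$ reduce, in this subcase, to $h(d) = h(4d)$, so they agree. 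If instead $\Norm(\ve_d) = +1$, then $h^+(d) = 2h(d)$ and $\Norm(\ve_{4d}) = +1$ too, so $h^+(4d) = 2h(4d)$; hence $h^+(d) = h^+(4d) \iff 2h(d) = 2h(4d) \iff h(d) = h(4d)$, and again the two conditions coincide.

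In every case the equivalence $h^+(d) = h^+(4d) \iff h(d) = h(4d)$ holds, so the statement of Theorems \ref{F=fUf} and \ref{second} is unaffected by the substitution. I expect the main obstacle to be the norm-$(-1)$ subcase: one must verify carefully that $\Norm(\ve_d) = -1$ implies $2 \nmid y_d$, so that the exponent $k$ passing from $\OO_d$ to $\OO_{4d}$ is odd and the sign of the norm is preserved. This is where the parity analysis of the fundamental unit — essentially the same computation underlying Corollary \ref{369} via Lemma \ref{midnight} — does the real work; everything else is bookkeeping with the ratio $h^+/h \in \{1,2\}$. I would therefore organize the proof to cite Lemma \ref{midnight} for the parity statement and then conclude by the short case distinction above.
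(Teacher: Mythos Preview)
Your overall strategy coincides with the paper's: both reduce Corollary \ref{corollary1.11} to the statement that $N(\ve_d)=-1\iff N(\ve_{4d})=-1$, which forces $h^+(d)/h(d)=h^+(4d)/h(4d)$ and hence the equivalence of the two conditions. The negative-discriminant case and the subcase $N(\ve_d)=+1$ are handled correctly.

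However, the subcase $N(\ve_d)=-1$ contains a false step. You assert that $N(\ve_d)=-1$ forces $2\nmid y_d$, but this is not true: for $d=37\equiv 5\bmod 8$ one has $\ve_{37}=6+\sqrt{37}=5+2\omega_{37}$, so $y_{37}=2$ is even while $N(\ve_{37})=36-37=-1$. The heuristic you give (``$\OO_d/2\OO_d$ has no element of norm $-1$ of the required shape'') cannot work, since $-1\equiv 1$ in $\FF_2$ and the norm map $\FF_4^\ast\to\FF_2^\ast$ is trivial; reducing modulo $2$ simply gives no information about the sign of the norm.

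The fix is exactly what the paper does, and it is shorter than your case analysis. For $d\equiv 5\bmod 8$ the group $(\OO_d/2\OO_d)^\ast\cong\FF_4^\ast$ has order $3$, so your index $k=[\OO_d^\ast:\OO_{4d}^\ast]$ lies in $\{1,3\}$ (the value $k=2$ you list cannot occur here). Equivalently, $\ve_d^3\in 1+2\OO_d\subset\OO_{4d}$ always. Hence $k$ is odd and $N(\ve_{4d})=N(\ve_d)^k=N(\ve_d)$ regardless of the parity of $y_d$. Once you replace the incorrect claim by this observation, your argument goes through and matches the paper's proof in \S\ref{sunny}.
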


\noindent The proof of Corollary \ref{corollary1.11} will be given in \S \ref{sunny}.

\begin{remark}
Theorem \ref{second} is very efficient in exhibiting primitive extraordinary forms when one only has at one's disposal tables in classical books (for instance \cite{Bo-Ch}, \cite[p.505--509]{Coh}, \cite[p.465--472]{Hua}). Usually, these tables give the values of $h^+(d)$ for fundamental $d$ and the fundamental unit, which provides a sufficient amount of information thanks to Corollary \ref{369}. Alternatively, one could use Magma or Sage.

To illustrate our results, consider $d = 229$. It is a prime $\equiv 5 \bmod 8$ and by \cite[p. 507]{Coh}, it satisfies $h^+(229) = 3$ and $\epsilon_{229} =7 + \frac{1+\sqrt{229}}2$. Lemma \ref{midnight} implies $h^+(229) = h^+(4 \cdot 229) = 3$. So there exist exactly three classes of quadratic forms with discriminant $229$ modulo the action of $\SL$. These are 
\begin{equation}
\label{list3}
[X^2-XY -57Y^2]_{\SL}, \, [3X^2+13XY-5Y^2]_{\SL}, \, [9X^2+7XY-5Y^2]_{\SL}.
\end{equation}
We have $\Cl^+(229) \simeq \Z/3\Z$ and $h^\star (229) = 2$. So there are two classes of lower extraordinary forms modulo $\GL$. They are 
$$
[X^2-XY -57Y^2]_{\GL} \textup{ and } [3X^2+13XY-5Y^2]_{\GL},
$$
since the change of variables $(X, Y)\mapsto (-X, -2X+Y)$ transforms the second form of the list \eqref{list3} into the third.
\end{remark}

The following consequence is inspired by \cite[p.73]{Wa1}. It concerns the image of
$$
\Z^2_{\text{prim}} := \{(m, n) : \gcd (m, n) = 1\}.
$$
By construction $F(\Z^2_{\text{prim}})$ is the set of integers which admit a {\it primitive representation} by $F$. If $F$ and $G$ are $\GL$--equivalent, then certainly $F(\Z^2_{\text{prim}}) = G(\Z^2_{\text{prim}})$. Conversely, we have

\begin{corollary} 
\label{cCoprime}
Let $F$ and $G$ be two primitive quadratic forms, with non square discriminants and with $F\bigl(\Z^2_{\textup{prim}}\bigr) = G\bigl(\Z^2_{\textup{prim}}\bigr)$. Then we have $F \sim_{\GL} G$.
\end{corollary}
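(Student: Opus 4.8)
The plan is to derive Corollary \ref{cCoprime} from the main classification of extraordinary forms together with a careful analysis of how primitive representations behave under the operations $F \mapsto F^\dag$ and $F \mapsto F^\ddag$. First I would observe that $F(\Z^2_{\textup{prim}}) = G(\Z^2_{\textup{prim}})$ implies that $F$ and $G$ have the same content: indeed, as with the full value set, the content $\Delta = \gcd(a,b,c)$ can be read off as the $\gcd$ of all primitively represented values (for a primitive form, $1$ is ``almost'' represented in the sense that one can produce coprime-to-anything values, whereas for content $\Delta$ every primitively represented value is divisible by $\Delta$), so after dividing out we may assume $F$ and $G$ are primitive.

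Next I would want to pass from the hypothesis on $\Z^2_{\textup{prim}}$ to a statement about the full value sets. The key point is that for a primitive form $F$ of non-square discriminant, the full value set $\textup{Im}(F)$ is determined by the set of \emph{primitively} represented values: every represented integer is of the form $m^2 \cdot v$ with $v$ primitively represented, and conversely (this is the standard fact that $F(mx, my) = m^2 F(x,y)$ together with the observation that a representation $F(x,y) = n$ with $\gcd(x,y) = m$ gives a primitive representation of $n/m^2$ by the same form $F$, since $F$ is primitive so $m^2 \mid n$). Hence $F(\Z^2_{\textup{prim}}) = G(\Z^2_{\textup{prim}})$ forces $\textup{Im}(F) = \textup{Im}(G)$, i.e. $F \sim_{\textup{val}} G$, so we are squarely in the setting of Theorem \ref{F=fUf}. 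If $[F]_{\textup{val}}$ does not split, then $F \sim_{\GL} G$ and we are done; so the only remaining case is when $F$ and $G$ are extraordinary and lie in the two distinct $\GL$-classes $[f]_{\GL}$ and $[f^\dag]_{\GL}$ of \eqref{split}.

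The heart of the argument is therefore to show that a lower extraordinary form $f$ and its associated upper form $f^\dag$ do \emph{not} have the same set of primitively represented values, which will rule out this case and complete the proof. For this I would compare $f(\Z^2_{\textup{prim}})$ with $f^\dag(\Z^2_{\textup{prim}}) = \{f(2x, y) : \gcd(x,y) = 1\}$. The natural invariant to exploit is behaviour modulo small primes, in particular modulo $2$: since $\textup{disc}(f) = d \equiv 5 \bmod 8$, the form $f$ primitively represents some odd values that are $\equiv$ a non-residue class forced out when the first variable is even; concretely, $f(1,0) = a$ with $a$ odd (as $d \equiv 5 \bmod 8$ forces $f$ to be, up to $\SL$-equivalence, $x^2 + xy + \frac{1-d}{4}y^2$ type with odd outer coefficients), whereas in $f^\dag(2x,y)$ the value at $(x,y)$ with $x$ having absorbed the factor $2$ shifts the $2$-adic behaviour. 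I expect one can pin down explicitly, using the reduced model $f \sim_{\SL} X^2 + XY + \frac{1-d}{4} Y^2$ and the relation \eqref{dwDW}-style identity $f(2X,Y)$, that $f$ primitively represents an integer $\equiv 2 \bmod 4$ or $\equiv 3 \bmod 4$ (depending on normalization) that $f^\dag$ cannot primitively represent, or vice versa — the cleanest route is to note $f$ primitively represents odd numbers in \emph{both} square classes mod $8$ relevant to $d$, while $f^\dag$, whose discriminant is $4d$, has more restrictive $2$-adic local conditions on primitive values. The main obstacle is exactly this local computation: making the mod-$8$ (or mod-$2^k$) comparison airtight and uniform across the definite and indefinite cases, since one must be sure the ``missing'' residue class is genuinely never hit primitively and not merely ``generically'' absent. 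Once that local obstruction is established, $f \not\sim_{\textup{val-prim}} f^\dag$, so the split case is impossible under the hypothesis of the corollary, and therefore $F \sim_{\GL} G$ in all cases.
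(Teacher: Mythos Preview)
Your overall strategy matches the paper's: pass from $F(\Z^2_{\textup{prim}}) = G(\Z^2_{\textup{prim}})$ to $F \sim_{\textup{val}} G$ via the decomposition $\Z^2 = \bigcup_{k\geq 1} k\cdot \Z^2_{\textup{prim}}$, invoke the classification (Theorem \ref{F=fUf} / Corollary \ref{rain}) to reduce to the case where $(F,G)$ is an associated extraordinary pair $(f,f^\dag)$ with $\textup{disc}(f) = d \equiv 5 \bmod 8$, and then separate $f$ from $f^\dag$ by a $2$-adic obstruction on primitively represented values.

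Where you go astray is in the execution of that last step. The claim that $f \sim_{\SL} X^2 + XY + \tfrac{1-d}{4}Y^2$ is false in general: this holds only when $f$ is in the principal class, and the whole point of Theorem \ref{second} is that \emph{every} form of discriminant $d$ is lower extraordinary, not just the principal one. So you cannot reduce to that model. Fortunately no such reduction is needed. The paper's argument is far simpler than the mod-$8$ computation you anticipate: from $d = b^2 - 4ac \equiv 5 \bmod 8$ one reads off directly that $a,b,c$ are all odd, hence
\[
f(X,Y) \equiv X^2 + XY + Y^2 \pmod 2,
\]
and $X^2+XY+Y^2$ never vanishes mod $2$ at a coprime pair. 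Thus $f$ primitively represents no even integer. On the other hand $G \sim_{\GL} f^\dag$ primitively represents $f^\dag(1,2) = f(2,2)$, which is even. That is the entire contradiction --- no mod-$8$ case analysis, no appeal to a reduced model, and it works uniformly for definite and indefinite forms.
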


\begin{proof} 
Let $F$ and $G$ be as above, with discriminants $d_F$ and $d_G$ satisfying $\vert d_F\vert \leq \vert d_G\vert$ without loss of generality. From the decomposition
$$
\Z^2 =\bigcup_{k\geq 1} k \cdot \Z^2_{\text{prim}},
$$
and from the hypothesis, we deduce that $F\sim_{\rm val} G$ by homogeneity. So $d_F$ and $d_G$ have the same sign. If $ d_F \ = d_G $, we have $F\sim_{\GL} G$ by Corollary \ref{rain}. 

If $\vert d_F \vert < \vert d_G \vert$, $(F, G)$ is a pair of associated extraordinary forms. By Theorem \ref{second} (2), we know that $F(X, Y) =aX^2 +bXY +cY^2$, with $b^2-4ac \equiv 5 \bmod 8$. So all the coefficients $a$, $b$ and $c$ are odd and we have
$$
F(X, Y) \equiv X^2 +XY+Y^2 \bmod 2.
$$
The form $X^2+XY+Y^2$ never represents primitively an even integer. This implies that $F$ never represents primitively an even integer. By Theorem \ref{second} (2), we have $G\sim_{\GL} F^\dag$. So the form $G$ primitively represents the integer $F^\dag(1,2) = F(2,2 )\equiv 0 \bmod 2$, contradiction.
\end{proof}

\subsection{Infinitely many classes of extraordinary forms}
\label{infinitely} 
Consider the two sets of integers 
$$
\begin{cases}
\mathcal D_{5, 8} := \{d > 0: d \equiv 5 \bmod 8, \, h^+(d) = h^+(4d)\},\\
\mathcal D_{20, 32} := \{d > 0 : d \equiv 20 \bmod 32, \, h^+(d) = h^+(d/4)\}.
\end{cases}
$$
As a consequence of Theorem \ref{second}, a form, with positive discriminant, is lower (resp. upper) extraordinary if and only if its discriminant belongs to $\mathcal D_{5, 8}$ (resp. ${\mathcal D}_{20,32}$). In the definitions of $ \mathcal D_{5, 8}$ and of $ \mathcal D_{20,32}$ we do not require $d$ or $d/4$ to be squarefree. In order to describe the asymptotic cardinality of $\mathcal D_{5, 8}$, we consider the following subset 
\begin{equation}
\label{S58}
\mathcal S_{5, 8} := \{d > 0 : d \text{ squarefree, } d \equiv 5 \bmod 8, \, h^+(d) = h^+(4d)\}.
\end{equation}
All the elements of $\mathcal S_{5, 8}$ are fundamental discriminants. We will put the sets $\mathcal D_{5, 8}$ and $\mathcal S_{5, 8}$ in perspective with the more general set
\begin{equation}
\label{defcalG58}
\mathcal G_{5, 8} := \{d > 0 : d \text{ squarefree}, d \equiv 5 \bmod 8\}.
\end{equation}
For $x \geq 1$, the number of elements $d \leq x$ belonging to the sets $\mathcal D_{5, 8}$, $\mathcal S_{5, 8}$ and $\mathcal G_{5, 8}$ are respectively denoted by $D_{5, 8}(x)$, $S_{5, 8}(x)$ and $G_{5, 8}(x)$. With this notation set, we have

\begin{theorem}
\label{S5} 
As $x$ tends to infinity one has the inequality
\begin{equation}
\label{pi2}
S_{5, 8}(x) \geq (1/2 - o(1))\cdot G_{5, 8} (x). 
\end{equation}
In particular, we have the inequality 
\begin{equation}
\label{pi3}
D_{5, 8}(x) \geq \left(\frac 1{2\, \pi^2}- o(1)\right) x.
\end{equation}
\end{theorem}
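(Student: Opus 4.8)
The plan is to translate membership in $\mathcal{S}_{5,8}$ into a divisibility condition on the class number of the order of discriminant $4d$, to observe that non-divisibility by $3$ of that class number is a sufficient condition, and then to control how often it fails by a Davenport--Heilbronn type first-moment estimate for $3$-torsion.

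First I would set up the reduction. Let $d$ be squarefree with $d\equiv 5\bmod 8$; then $d$ is a fundamental discriminant, $2$ is inert in $\mathbb{Q}(\sqrt d)$, and $\mathbb{Z}[\sqrt d]$ is the order of conductor $2$ in $\mathcal{O}_d$. Since $\mathbb{Z}[\sqrt d]=\mathbb{Z}+2\mathbb{Z}\omega_d$ and $(\mathcal{O}_d/2\mathcal{O}_d)^\times\cong\mathbb{F}_4^\times$ has order $3$, the class number formula for this order gives $h(4d)=3h(d)/[\mathcal{O}_d^\times:\mathbb{Z}[\sqrt d]^\times]$ with the unit index dividing $3$, so $h(4d)\in\{h(d),\,3h(d)\}$; and the index is $1$ precisely when $\epsilon_d\in\mathbb{Z}[\sqrt d]$, i.e. when $2\mid y_d$. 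By Corollary \ref{corollary1.11} (and its proof), for such $d$ one has $d\in\mathcal{S}_{5,8}$ exactly when $h(d)=h(4d)$ (equivalently $2\nmid y_d$, by Corollary \ref{369}). In particular, if $3\nmid h(4d)$ then $h(4d)\ne 3h(d)$, hence $h(4d)=h(d)$ and $d\in\mathcal{S}_{5,8}$, so
\[
S_{5,8}(x)\ \ge\ G_{5,8}(x)\ -\ \#\{d\le x:\ d\in\mathcal{G}_{5,8},\ 3\mid h(4d)\}.
\]

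Next I would bound the last count. If $3\mid h(4d)$ then $\Cl(4d)[3]$ is a nonzero $\mathbb{F}_3$-vector space, so $|\Cl(4d)[3]|\ge 3$, and therefore
\[
\#\{d\le x:\ d\in\mathcal{G}_{5,8},\ 3\mid h(4d)\}\ \le\ \tfrac12\sum_{\substack{d\le x\\ d\in\mathcal{G}_{5,8}}}\bigl(|\Cl(4d)[3]|-1\bigr).
\]
The key analytic input I would invoke is a Davenport--Heilbronn type asymptotic
\[
\sum_{\substack{d\le x\\ d\in\mathcal{G}_{5,8}}}|\Cl(4d)[3]|\ \le\ (2+o(1))\,G_{5,8}(x)\qquad(x\to\infty),
\]
to be proved by parametrizing the nontrivial $3$-torsion classes of the quadratic order of discriminant $4d$ by $\mathrm{GL}_2(\mathbb{Z})$-classes of integral binary cubic forms of that discriminant (equivalently, by cubic rings with the prescribed resolvent), counting these forms over the family of discriminants $4d$ with $d$ squarefree and $d\equiv 5\bmod 8$ by the method of Davenport--Heilbronn together with a squarefree sieve, and computing the relevant local densities; the $2$-adic prescription --- that $2$ be inert in $\mathbb{Q}(\sqrt d)$ and that the order $\mathbb{Z}[\sqrt d]$ have conductor $2$ --- is what produces the constant $2$. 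Combining the three displays gives $\#\{d\le x:\ d\in\mathcal{G}_{5,8},\ 3\mid h(4d)\}\le(\tfrac12+o(1))G_{5,8}(x)$, hence $S_{5,8}(x)\ge(\tfrac12-o(1))G_{5,8}(x)$, which is \eqref{pi2}.

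Finally, \eqref{pi3} would follow immediately: one has $G_{5,8}(x)\sim x/\pi^2$ by the standard count of squarefree integers in an arithmetic progression (squarefree integers lie in the class $5\bmod 8$ with relative density $\tfrac14$ among the odd squarefree integers, whose density is $4/\pi^2$), and $\mathcal{S}_{5,8}\subseteq\mathcal{D}_{5,8}$ gives $D_{5,8}(x)\ge S_{5,8}(x)\ge(\tfrac1{2\pi^2}-o(1))x$. The main obstacle is the Davenport--Heilbronn estimate of the previous step: one must run the binary-cubic-form count over the thin family $\{4d:\ d\text{ squarefree},\ d\equiv 5\bmod 8\}$ and pin down the local factor at $2$ well enough to get a constant at most $2$. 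The argument is tight --- a first moment yields the density $\tfrac12$ and no better, even though the true proportion of $\mathcal{S}_{5,8}$ inside $\mathcal{G}_{5,8}$ is presumably strictly larger (a Cohen--Lenstra heuristic for conductor-$2$ orders would predict a value such as $\prod_{k\ge1}(1-3^{-k})$), which lies beyond moment methods.
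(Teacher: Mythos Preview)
Your argument is correct in outline, but it takes a genuinely different route from the paper. The paper's own proof is a one-line citation: it observes that $\mathcal S_{5,8}$ is the complement in $\mathcal G_{5,8}$ of Stevenhagen's \emph{Eisenstein set} $\mathcal E=\{d\in\mathcal G_{5,8}:h^+(4d)=3h^+(d)\}$ and then quotes \cite[Theorem, p.\,261]{Stevenhagen}, which states that $\mathcal E$ has upper density at most $1/2$ in $\mathcal G_{5,8}$. Passing to the complement gives \eqref{pi2}, and \eqref{pi3} follows from $G_{5,8}(x)\sim x/\pi^2$ exactly as you wrote. What you have done instead is to unpack Stevenhagen's black box.

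Your reduction is the same as his: the dichotomy $h(4d)\in\{h(d),3h(d)\}$ and the implication ``$3\nmid h(4d)\Rightarrow d\in\mathcal S_{5,8}$'' are correct (and your appeal to Corollary~\ref{corollary1.11} to pass between $h$ and $h^+$ is legitimate). Your first-moment inequality is also fine. The substantive step is the Davenport--Heilbronn input $\sum_{d\in\mathcal G_{5,8},\,d\le x}|\Cl(4d)[3]|\le(2+o(1))\,G_{5,8}(x)$, which is precisely what Stevenhagen establishes: via ring class field theory one identifies the index-$3$ subgroups of $\Cl(4d)$ with non-Galois cubic fields $M$ of discriminant $d$ or $4d$ (according as the corresponding cyclic cubic over $\Q(\sqrt d)$ has conductor $\mathcal O_K$ or $2\mathcal O_K$), and then one counts such cubics by Davenport--Heilbronn with the appropriate local conditions at $2$. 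So your plan is sound, but the sentence ``to be proved by\dots'' is exactly the place where all the work lies; in effect you would be reproving Stevenhagen's theorem rather than citing it. Either is acceptable, but you should be aware that the paper chooses the citation.

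One small correction to your closing heuristic: the conjectured density of $\mathcal S_{5,8}$ in $\mathcal G_{5,8}$ is $2/3$, not $\prod_{k\ge1}(1-3^{-k})$. Stevenhagen's heuristic is not a Cohen--Lenstra model for the order but equidistribution of the image of $\epsilon_d$ among the three nonzero classes of $\mathcal O_d/2\mathcal O_d\cong\FF_4$; exactly one of these lies in $\Z/2\Z$, so $\mathcal E$ should have density $1/3$.
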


The inequality \eqref{pi2} means that, for a positive proportion of positive discriminants $d$, every form $F$ with discriminant $d$ is extraordinary. This implies that the set of pairs of $\GL$--classes of primitive extraordinary forms is infinite. This situation is completely different from the case of definite forms. 

To go from \eqref{pi2} to \eqref{pi3}, we use the M\" obius function $\mu$ to write the equalities 
$$ 
G_{5, 8}(x) = \sum_{\substack{d \leq x \\ d \equiv 5 \bmod 8}} \mu^2(d) = \sum_{\substack{k \\ 2 \nmid k}} \mu (k) \sum_{\substack{\delta\leq x/k^2 \\ \delta k^2\equiv 5 \bmod 8}} 1 \sim \prod_{p \geq 3} \left(1- \frac{1}{p^2} \right) \cdot \frac{x}{8} \sim \frac{1}{\pi^2} \cdot x.
$$
So it remains to prove the inequality \eqref{pi2}. This will be done in \S \ref{Statistics}.

\subsection{Ordinary forms of a special type} 
Theorem \ref{second} {\it 2.(b)} or {\it 3.(b)} proves that there exist infinitely many classes of ordinary forms by simply choosing special congruences for $d$. We present a more involved way to exhibit ordinary forms.

\begin{theorem}
\label{488} 
There exist infinitely many $\GL$--classes of primitive ordinary forms $f$ with discriminant congruent to $5\bmod 8$.
\end{theorem}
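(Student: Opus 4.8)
The plan is to exhibit, by a completely explicit and elementary construction, infinitely many discriminants $d \equiv 5 \bmod 8$ for which $h^+(d) \neq h^+(4d)$. Once this is done, Theorem \ref{second}(2)(b) guarantees that \emph{every} primitive form of discriminant $d$ is ordinary; in particular the principal form $X^2 + XY + \tfrac{1-d}{4}Y^2$ is a primitive ordinary form of discriminant $d$, and since discriminant is a $\GL$--invariant and the $d$'s are pairwise distinct, letting $d$ range over the family produces infinitely many $\GL$--classes of primitive ordinary forms with discriminant $\equiv 5 \bmod 8$. By Corollary \ref{369}, with the choice $\omega_d = \tfrac{1+\sqrt d}{2}$, the condition $h^+(d) \neq h^+(4d)$ is equivalent to $2 \mid y_d$, i.e. to the fundamental unit $\epsilon_d = x_d + y_d\omega_d$ lying in the index-$2$ suborder $\mathcal{O}_{4d} = \Z + 2\mathcal{O}_d = \Z[\sqrt d]$ of $\mathcal{O}_d$.

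The family I would use is $d = n^2 + 1$ with $n \equiv 2 \bmod 4$. For such $n$, $d$ is a non-square integer with $d \equiv 5 \bmod 8$, and $u := n + \sqrt d$ is a unit of $\mathcal{O}_{4d} = \Z[\sqrt d]$ of norm $-1$ with $1 < u < 2n+1$. The point to prove is that $2 \mid y_d$ for all large $n$. As a preliminary lemma I would record that, because $d \equiv 5 \bmod 8$, the minimal polynomial $T^2 - T + \tfrac{1-d}{4}$ of $\omega_d$ is irreducible mod $2$, so $\mathcal{O}_d/2\mathcal{O}_d \cong \mathbb{F}_4$ and the image of $\mathcal{O}_{4d} = \Z + 2\mathcal{O}_d$ is the prime subfield; hence $\epsilon_d \in \mathcal{O}_{4d}$ exactly when $2 \mid y_d$, and when $2 \nmid y_d$ the image of $\epsilon_d$ in $\mathbb{F}_4^{\times} \cong \Z/3\Z$ has order $3$, which forces $\epsilon_{4d} = \epsilon_d^3$.

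Now suppose for contradiction that $2 \nmid y_d$. Then $\epsilon_{4d} = \epsilon_d^3$, so the unit $u > 1$ of $\mathcal{O}_{4d}$ is a positive power of $\epsilon_d^3$, whence $\epsilon_d \leq u^{1/3} < (2n+1)^{1/3}$. On the other hand, writing $\epsilon_d = \tfrac{A + B\sqrt d}{2}$ with $A = 2x_d + y_d$ and $B = y_d$ in $\Z$, one has $B \geq 1$ (since $B\sqrt d = \epsilon_d - \epsilon_d' > 0$) and $A \geq 1$ (since $A = \epsilon_d + \epsilon_d' > 0$, because $|\epsilon_d'| = 1/\epsilon_d < 1$), so $\epsilon_d \geq \tfrac{1+\sqrt d}{2} > \tfrac n2$. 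For every $n \equiv 2 \bmod 4$ with $n \geq 6$ one has $(n/2)^3 > 2n+1$, so the bounds $\tfrac n2 < \epsilon_d < (2n+1)^{1/3}$ are incompatible. This contradiction yields $2 \mid y_d$, hence $h^+(d) \neq h^+(4d)$, for all such $n$, which completes the proof.

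I expect the only genuine obstacle to be the size estimate in the last paragraph: one must rule out the possibility that the ``cheap'' unit $n + \sqrt d$ of the suborder $\mathcal{O}_{4d}$ is a proper cube in $\mathcal{O}_d$, equivalently that $\epsilon_d$ itself fails to lie in $\mathcal{O}_{4d}$. The inequality argument above handles this robustly; as an alternative one could instead solve the Pell-type equation $A^2 - dB^2 \in \{\pm 4\}$ for $d = n^2+1$ and verify directly that $n + \sqrt d$ is the fundamental unit of $\mathcal{O}_d$ for $n \geq 3$, but this is messier and unnecessary.
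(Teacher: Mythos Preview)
Your proof is correct and takes a genuinely different route from the paper's. The paper simply invokes Stevenhagen's theorem \cite[Theorem 1.6]{Stevenhagen} that the Eisenstein set $\mathcal{E}$ (squarefree $d\equiv 5\bmod 8$ with $h^+(4d)=3h^+(d)$) satisfies $|\{d\leq x: d\in\mathcal{E}\}|\gg\sqrt{x}$, and then appeals to Theorem \ref{second}(2)(b). Your argument instead produces the explicit family $d=n^2+1$ with $n\equiv 2\bmod 4$, $n\geq 6$, and shows by an elementary size estimate that the fundamental unit $\epsilon_d$ already lies in $\mathcal{O}_{4d}=\Z[\sqrt d]$: if it did not, then $\epsilon_{4d}=\epsilon_d^3$ would force $\epsilon_d<(2n+1)^{1/3}$, contradicting the easy lower bound $\epsilon_d\geq \tfrac{1+\sqrt d}{2}>\tfrac n2$. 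Via Corollary \ref{369} (equivalently Lemma \ref{midnight}) this gives $h^+(d)\neq h^+(4d)$, and then Theorem \ref{second}(2)(b) finishes as you say.

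What each approach buys: the paper's citation is shorter and, because Stevenhagen works with squarefree (hence fundamental) discriminants, yields the slightly stronger conclusion that infinitely many such ordinary classes occur already among fundamental discriminants. Your argument is entirely self-contained and elementary, avoids any appeal to \cite{Stevenhagen}, and still recovers the same $\gg\sqrt{x}$ count of ordinary discriminants up to $x$; the only small loss is that your $d=n^2+1$ are not all squarefree (e.g.\ $n=18$ gives $d=325=5^2\cdot 13$), though this is irrelevant for the statement of Theorem \ref{488} as written.
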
 

\noindent The proof, based on results in \cite{Stevenhagen}, will be given in \S \ref{againstevenhagen}.

\section*{Acknowledgements}
The idea of this paper emerged when the first named author worked with Michel Waldschmidt on the common values of binary forms with degrees at least three. EF warmly thanks Michel Waldschmidt for his constant support and for his inspiration. PK gratefully acknowledges the support of Dr. Max R\"ossler, the Walter Haefner Foundation and the ETH Z\"urich Foundation. The authors thank Andrew Earnest for bringing the work of Delang Li to their attention.
 
\section{Schering's result}
\label{Scheringresult}
We now state the rather old result of Schering (1859). After recalling the notations and definitions he used, we give his central result. Then in \S \ref{CommentsofSchering} we will comment on the consequences of Theorem \ref{Schering} and their links with Theorems \ref{F=fUf} and \ref{second}. Finally, in \S \ref{modern} we will give an alternative proof of it, based on rather modern concepts.

\subsection{The statement} 
In this section only, we adopt Schering's notations inspired by Gauss \cite[Section 5]{Gauss}. They do not fit to those used in \S \ref{intro}. In particular, the middle coefficients of the quadratic forms are now even. The result of Schering concerns binary forms $f(X, Y)$ written as
\begin{equation}
\label{fquadSchering}
f(X, Y) = aX^2 +2bXY +cY^2, 
\end{equation}
with coefficients $a$, $b$ and $c \in \mathbb Z$. The form $f$ is {\it primitive} when 
$$
\gcd(a, b, c) = 1.
$$
Three integers are attached to $f$. These are 
\begin{align*}
& \text{ \it the determinant of } f: &d &= b^2-ac, \\ 
& \text{ \it the order of } f:&o &= \gcd(a, b, c),\\ 
& \text{ \it the species (in french, {\it esp\`ece}) of } f:&e& = \frac{\gcd(a, 2b, c)}{\gcd(a, b, c)}. 
\end{align*}
Obviously, the species of a quadratic form is equal to $1$ or $2$ and its determinant is divisible by the square of its order. The primitive form $f$ is {\it properly primitive} if $e=1$. In that case $\gcd (a, b, c) = \gcd (a, 2b, c) = 1$ and the form $f$ is primitive in the sense of \S \ref{intro}. If $e = 2$, we say that the primitive form is {\it improperly primitive}. In that case the form $f/2$ is primitive in the sense of \S \ref{intro}. See \cite[p. 65]{Cox} for comments on these notations. 
 
The set of forms of the shape \eqref{fquadSchering} is stable under the action of $\SL$ by change of variables. Thus we obtain the notion of {\it equivalent forms} and {\it equivalence classes of quadratic forms}, or {\it classes modulo $\SL$} for short. The numbers $d$, $e$ and $o$ are constant in each class. We say that a form $f$ is {\it contained in a form $F$} if there exist some integer coefficients $a_i$ such that $f(X, Y) = F(a_1X +a_2 Y, a_3 X +a_4 Y)$. This implies the inclusion of the images $f(\Z^2) \subseteq F(\Z^2)$.
 
Let 
$$
F(X, Y) = AX^2 + 2BXY + CY^2
$$
be another quadratic form with integers $A$, $B$ and $C$. We denote by capital letters $D$, $E$ and $O$ the determinant, the species and the order of $F$.

Motivated by a result stated, without precision nor proof, by Legendre concerning forms contained in another one (see \cite[pages 237--238]{Le}), Schering showed a theorem  that we reproduce as follows.

\begin{theorem}[{{\cite[ Troisi\`eme Th\' eor\`eme]{Sch} and \cite[p. 26]{Dic}}}]
\label{Schering}
The quadratic forms $f$ and $F$ have the same image if and only if the following conditions are satisfied
\begin{enumerate}
\item [(1)] one of the two forms is contained in the other one, 
\item [(2)] $oe = OE$,
\item [(3)] $e^2 d = E^2D$,
\item[(4)] and if $e \ne E$, the number $D/O^2 (=d/o^2)$ is congruent to $5$ modulo $8$, and for this value of the determinant, there are as many improperly primitive classes as properly primitive classes modulo $\SL$ (which means that the classes with the corresponding $e = 2$ and $e = 1$ are equally numerous).
\end{enumerate}
\end{theorem}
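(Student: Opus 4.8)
The plan is to translate Schering's data (order $o$, species $e$, determinant $d$) into the normalisation of \S\ref{intro} and then invoke Theorems \ref{F=fUf} and \ref{second}. We may assume $d$ is not a perfect square; the square case is elementary (condition (4) then forces $e=E$, and (1)--(3) produce a $\GL$--equivalence directly) and is omitted. To a Schering form $f=aX^2+2bXY+cY^2$ attach $\phi(f):=f/(oe)$; since $oe=\gcd(a,2b,c)$, the form $\phi(f)$ is primitive in the sense of \S\ref{intro}, one has $f=oe\cdot\phi(f)$, and
\[
{\rm disc}\,\phi(f)=\frac{4d}{(oe)^2},
\]
which is $\equiv 0\bmod 4$ precisely when $e=1$ (so $\phi(f)$ has even middle coefficient) and $\equiv 1\bmod 4$ precisely when $e=2$ (so $\phi(f)$ has odd middle coefficient); thus the species is visible in ${\rm disc}\,\phi(f)\bmod 4$. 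I will also use the dictionary: for $n\equiv 5\bmod 8$, ``as many improperly primitive as properly primitive $\SL$--classes of determinant $n$'' is exactly $h^+(n)=h^+(4n)$, because halving induces a bijection between improperly primitive $\SL$--classes of determinant $n$ and primitive $\SL$--classes of discriminant $n$, whereas the properly primitive $\SL$--classes of determinant $n$ are precisely the primitive $\SL$--classes of discriminant $4n$, and then Proposition \ref{t7.7} applies.

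\emph{Reduction to primitive forms and the containment index.} A primitive form (in the sense of \S\ref{intro}) represents an integer coprime to any prescribed modulus, so the gcd of $f(\Z^2)$ equals $oe$ (Lemma \ref{lContent}). Hence $f\sim_{\text{val}}F$ forces $oe=OE=:g$, which is condition (2), and then $f=g\,\phi(f)$, $F=g\,\phi(F)$, so $f\sim_{\text{val}}F$ iff $oe=OE$ and $\phi(f)\sim_{\text{val}}\phi(F)$. Likewise $f$ is contained in $F$ iff $\phi(f)$ is contained in $\phi(F)$ by the same integer matrix $M$, and then $4d/g^2={\rm disc}\,\phi(f)=(\det M)^2\cdot 4D/g^2$; combined with condition (3), namely $e^2d=E^2D$, this gives $(\det M)^2=E^2/e^2$, so $\det M=\pm 1$ (and $e=E$) or $\det M=\pm 2$ (and $e=1$, $E=2$); the ``mismatched'' case $e=2$, $E=1$ cannot occur with $f$ contained in $F$.

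\emph{Proof of the equivalence.} ($\Rightarrow$) Assume $f\sim_{\text{val}}F$; then $oe=OE$ and $\phi(f)\sim_{\text{val}}\phi(F)$, so by Theorem \ref{F=fUf} either (i) $\phi(f)\sim_{\GL}\phi(F)$, in which case equal discriminants give $d=D$ and then $o=O$, so (3) holds, (4) is vacuous, and scaling a $\GL$--equivalence up by $g$ gives $f\sim_{\GL}F$, hence (1); or (ii) $\{\phi(f),\phi(F)\}$ is a pair of associated extraordinary forms, whose discriminants differ by a factor $4$, so $e\neq E$. In case (ii), writing (after possibly swapping $f$ and $F$) $\phi(f)$ for the lower form, its discriminant $n:={\rm disc}\,\phi(f)\equiv 5\bmod 8$, hence $e=2$, $E=1$, and $D/O^2=n=d/o^2$; by Theorem \ref{second} also $h^+(n)=h^+(4n)$, so the dictionary yields condition (4), while $e^2d=4d=D=E^2D$ gives (3); finally $\phi(F)\sim_{\GL}\phi(f)^\dag$ by Theorem \ref{second}, and $\phi(f)^\dag$ is contained in $\phi(f)$ by definition, so after scaling up $F$ is contained in $f$ -- condition (1). ($\Leftarrow$) Assume (1)--(4); relabelling, let $f$ be contained in $F$, so $\phi(f)=\phi(F)\circ M$ with $\det M=\pm 1$ or $\pm 2$. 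If $\det M=\pm 1$, then $f\sim_{\GL}F$ and we are done. If $\det M=\pm 2$, then $e=1$, $E=2$, and by the elementary divisor theorem $M$ factors as a $\GL$--matrix, the substitution $(X,Y)\mapsto(X,2Y)$, and a $\GL$--matrix, so $\phi(f)\sim_{\GL}\psi^\ddag$ for some $\psi\sim_{\GL}\phi(F)$, where $\psi$ has discriminant $n:={\rm disc}\,\phi(F)$. Condition (4) says $n\equiv 5\bmod 8$ and $h^+(n)=h^+(4n)$ (dictionary), so by Theorem \ref{second} every primitive form of discriminant $n$, in particular $\psi$, is lower extraordinary, satisfies $\psi^\ddag\sim_{\GL}\psi^\dag$, and has $[\psi]_{\text{val}}=[\psi]_{\GL}\cup[\psi^\dag]_{\GL}$; hence $\phi(f)\sim_{\GL}\psi^\ddag\sim_{\GL}\psi^\dag\in[\psi]_{\text{val}}$, so $\phi(f)\sim_{\text{val}}\phi(F)$ and therefore $f\sim_{\text{val}}F$.

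\emph{Main obstacle.} The mathematical content is entirely in Theorems \ref{F=fUf} and \ref{second}; the work is the translation. Two points demand care. First, matching Schering's ``equally many properly and improperly primitive classes'' with $h^+(n)=h^+(4n)$, tracking the factor $2$ between an improperly primitive form and its half as well as the even--middle--coefficient convention. Second, in the ``$\Leftarrow$'' direction, upgrading ``$f$ contained in $F$ with index $2$'' to an equality of value sets: after the elementary divisor reduction one is left with a form of the shape $\psi^\ddag$, and the assertion $\psi^\ddag(\Z^2)=\phi(F)(\Z^2)$ is not formal -- it genuinely uses the classification of extraordinary forms together with $\psi^\dag\sim_{\GL}\psi^\ddag$ in the relevant discriminants.
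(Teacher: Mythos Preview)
Your proof is correct. The paper does not give its own proof of Theorem~\ref{Schering}---it is quoted from \cite{Sch} and \cite{Dic}---but the comments in \S\ref{CommentsofSchering} record precisely the dictionary you set up (Schering's $(o,e,d)$ versus the content and discriminant of \S\ref{intro}, condition~(4) versus $h^+(d)=h^+(4d)$, and the Smith normal form reduction of the containment matrix to the substitution $(X,Y)\mapsto(2X,Y)$), and your argument is the natural formalisation of that sketch through Theorems~\ref{F=fUf} and~\ref{second}, which the paper establishes independently of Schering, so there is no circularity. The one point you leave open is the square--determinant case, which indeed falls outside Theorems~\ref{F=fUf} and~\ref{second}; your observation that a perfect square cannot be $\equiv 5\bmod 8$ forces $e=E$ and hence reduces the backward direction to a $\GL$--equivalence, but the forward direction for degenerate forms would still need a short separate argument to be complete.
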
 

\subsection{Comments and interpretation}
\label{CommentsofSchering} 
We now interpret the statement of Theorem \ref{Schering}.

\begin{itemize}
\item Let $F$ and $f$ be two binary quadratic forms such that $f\sim_{\rm val} F$ but $f\not\sim_{\GL} F$. Trivially, the integers $d$ and $D$ have the same sign. With the notations of this theorem, we have the equality
$$
\frac Dd = \left(\frac eE\right)^2.
$$
Since the numbers $e$ and $E$ take only the values $1$ or $2$, we deduce that $D/d \in \{1, 4\}$, if we suppose that $\vert D \vert \geq \vert d \vert$. Let $\gamma$ be a matrix with integer coefficients such that (see condition (1))
\begin{equation}
\label{F=fcircgamma}
F = f \circ \gamma.
\end{equation} 
By the relation $D = d \cdot (\det \gamma)^2$ we deduce that $\det \gamma = \pm 1$ or $\det \gamma = \pm 2$. The first case is forbidden since it would give $f \sim_{\GL} F$ by \eqref{F=fcircgamma}. So $\det \gamma =\pm 2$. Using the Smith normal form, there exist two matrices $A, B \in \GL$ such that
$$
\gamma = A \begin{pmatrix}2&0\\0&1\end{pmatrix} B.
$$
Inserting this equality into \eqref{F=fcircgamma}, we deduce that there exists a quadratic form $g \sim_{\GL} f$ such that
$$
F(X, Y)\sim_{\GL} g(2X, Y).
$$
\item From the equality $D = d \cdot (\det \gamma)^2$, we deduce the following statement: {\it if the forms $f$ and $F$ are such that $d = D$ and such that $f \sim_{\rm val} F$, then we have $f \sim_{\GL} F$}.

\item The hypothesis {\it (1)} of Theorem \ref{Schering} is not required in the proofs of our Theorems \ref{F=fUf}, \ref{second} and \ref{tMain} leading however to the same type of conclusions. Another significant advantage of our results is that condition {\it (4)} of Schering's result is demystified by reinterpreting it in terms of more modern invariants such as the class group (or alternatively the fundamental unit by Corollary \ref{369}). Indeed, a computation shows that the number of improperly primitive forms equals $h^+(d)$, while the number of properly primitive forms equals $h^+(4d)$. Therefore, the condition {\it (4)} is equivalent to the condition $h^+(d) = h^+(4d)$ appearing in Theorems \ref{F=fUf} and \ref{second}.
%
%
\end{itemize}

\section{A modern approach to Schering's result}
\label{modern} 
We now return to the conventions of \S \ref{intro}. In particular, we consider quadratic forms generically written as \eqref{defF}. Although the case of negative discriminants is completely solved by Theorem \ref{starting}, we will consider it again in the proofs. Our purpose is to prove several autonomous results that we will gather in \S \ref{recovering} to obtain Theorems \ref{F=fUf} and \ref{second}.

\subsection{Auxiliary results}
\label{auxiliary}
We will need the following auxiliary results. 

\begin{lemma} 
Let $f(X, Y) = aX^2 + bXY + cY^2$ be a primitive quadratic form. Then
$$ 
\gcd \left\{f(m, n) : (m, n) \in \Z^2\right\} =1.
$$
\end{lemma}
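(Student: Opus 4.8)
The proof is immediate once one spots the right finitely many values, so the ``plan'' is essentially a one--line observation. Set $g := \gcd\{f(m,n) : (m,n)\in\Z^2\}$, which is a well-defined positive integer because $f$ is not the zero form (its coefficients are coprime, hence not all zero). I would note that $g$ divides the three particular values
$$
f(1,0) = a, \qquad f(0,1) = c, \qquad f(1,1) = a+b+c.
$$
From $g \mid a$, $g \mid c$ and $g \mid (a+b+c)$ one gets $g \mid b$ as well, so $g$ divides $\gcd(a,b,c) = 1$; therefore $g = 1$, and conversely $1$ divides every value of $f$, so equality holds.

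There is really no obstacle here; the only point worth stating explicitly is that the greatest common divisor of an infinite family of integers is by definition the nonnegative generator of the subgroup of $\Z$ they generate, so exhibiting finitely many values with gcd $1$ already forces $g = 1$. If one prefers an argument that does not privilege those three points, one can instead work prime by prime: for a prime $p$, primitivity gives $p \nmid \gcd(a,b,c)$, so the reduction $\bar f \in \FF_p[X,Y]$ is a nonzero quadratic form, which therefore takes some nonzero value on $\FF_p^2$; lifting a point where it is nonzero, $p$ fails to divide the corresponding value of $f$, whence $p \nmid g$. Either way the lemma follows, and it is exactly the statement needed to speak of the content of a form as a $\sim_{\mathrm{val}}$--invariant, as alluded to in the introduction.
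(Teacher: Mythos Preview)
Your proof is correct and is essentially identical to the paper's own argument: both use the three values $f(1,0)=a$, $f(0,1)=c$, $f(1,1)=a+b+c$ to conclude that any common divisor of the values divides $\gcd(a,b,c)=1$. The only cosmetic difference is that the paper phrases it as ``no prime can divide all values'' rather than working directly with the gcd $g$.
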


Such a lemma is not true for primitive forms with degree $\geq 3$. For example, the cubic primitive form $XY(X+Y)$ has image contained in the set of even integers.

\begin{proof} 
Let $p$ be a prime, if it exists, such that $p \mid f(m, n)$ for all $m$ and $n$ in $\Z$. In particular, $p \mid f(1,0) = a$, $p \mid f(0, 1) = c$ and $p \mid f(1,1) = a + b + c$. Thus $p \mid \gcd (a, b, c)$, which is contrary to the primitivity of $f$.
\end{proof}

By homogeneity, we deduce

\begin{lemma} 
Let $f(X, Y)= aX^2+bXY+cY^2$ be a quadratic form. Then we have the equality
$$ 
\gcd(a, b, c) = \gcd \left\{ f(m, n) : (m, n)\in \Z^2\right\} .
$$
\end{lemma}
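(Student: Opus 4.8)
The statement to prove is the "by homogeneity" version of the previous lemma: for an arbitrary quadratic form $f(X,Y) = aX^2 + bXY + cY^2$ (not assumed primitive), one has $\gcd(a,b,c) = \gcd\{f(m,n) : (m,n) \in \Z^2\}$. The plan is to reduce this to the primitive case, which was just handled in the preceding lemma. Write $\Delta := \gcd(a,b,c)$ and factor $f = \Delta \cdot g$, where $g(X,Y) = (a/\Delta)X^2 + (b/\Delta)XY + (c/\Delta)Y^2$ has integer coefficients with $\gcd(a/\Delta, b/\Delta, c/\Delta) = 1$; that is, $g$ is primitive.

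\textbf{Key steps.} First, observe that $f(m,n) = \Delta \cdot g(m,n)$ for every $(m,n) \in \Z^2$, so
$$
\gcd\{f(m,n) : (m,n) \in \Z^2\} = \Delta \cdot \gcd\{g(m,n) : (m,n) \in \Z^2\}.
$$
Second, apply the previous lemma to the primitive form $g$: it gives $\gcd\{g(m,n) : (m,n)\in\Z^2\} = 1$. Combining the two displays yields $\gcd\{f(m,n):(m,n)\in\Z^2\} = \Delta = \gcd(a,b,c)$, as desired. One minor point to check along the way is that pulling a constant factor out of a gcd of a set of integers is legitimate, i.e. $\gcd\{\Delta t : t \in S\} = \Delta \cdot \gcd\{t : t \in S\}$ for a set $S$ of integers and $\Delta \in \Z_{>0}$; this is immediate from the definition of gcd (or from the case $\Delta = 0$ being trivial, which does not arise here since $f$ is a genuine quadratic form, but in any event if $\Delta = 0$ both sides are $0$).

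\textbf{Main obstacle.} There is essentially no obstacle: the whole content has been isolated into the preceding primitive lemma, and what remains is the bookkeeping of scaling. The only thing to be careful about is the degenerate edge cases (e.g. whether one allows the zero form), but under the standing conventions of the paper these do not occur, and the argument above covers them anyway. So the proof is a two-line deduction from the previous lemma via homogeneity.
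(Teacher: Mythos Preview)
Your proof is correct and matches the paper's approach exactly: the paper simply writes ``By homogeneity, we deduce'' after the primitive case, and you have spelled out precisely what that means --- factor out $\Delta = \gcd(a,b,c)$, apply the preceding lemma to the resulting primitive form, and pull the scalar through the gcd.
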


This result directly gives

\begin{lemma}
\label{lContent}
Let $f(X, Y) = aX^2 + bXY + cY^2$ and $F(X, Y) = AX^2 + BXY + CY^2$ be forms such that $f \sim_{\rm val} F$. Then we have the equality 
\[
\gcd(a, b, c) = \gcd(A, B, C).
\]
In particular, if $F$ is primitive, every form in $[F]_{\rm val}$ is also primitive.
\end{lemma}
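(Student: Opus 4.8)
The final statement to prove is Lemma~\ref{lContent}: if $f \sim_{\rm val} F$ then the contents $\gcd(a,b,c)$ and $\gcd(A,B,C)$ agree, and primitivity is preserved within a $\sim_{\rm val}$-class.

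\textbf{Plan.} The plan is to simply chain together the two lemmas that immediately precede this statement in the excerpt. The preceding (unnumbered) lemma asserts that for \emph{any} quadratic form $g(X,Y) = aX^2 + bXY + cY^2$ one has the identity
\[
\gcd(a,b,c) = \gcd\bigl\{ g(m,n) : (m,n) \in \Z^2 \bigr\}.
\]
In other words, the content of a binary quadratic form is intrinsically determined by its value set. Now suppose $f \sim_{\rm val} F$, which by definition means $\textup{Im}(f) = \textup{Im}(F)$, i.e. $\{f(m,n) : (m,n)\in\Z^2\} = \{F(m,n):(m,n)\in\Z^2\}$. Applying the identity above to $f$ and then to $F$ gives
\[
\gcd(a,b,c) = \gcd\bigl\{ f(m,n):(m,n)\in\Z^2\bigr\} = \gcd\bigl\{ F(m,n):(m,n)\in\Z^2\bigr\} = \gcd(A,B,C),
\]
which is exactly the asserted equality.

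\textbf{The ``in particular'' clause.} For the second assertion, suppose $F$ is primitive, so $\gcd(A,B,C) = 1$, and let $g \in [F]_{\rm val}$ be arbitrary, written $g(X,Y) = a'X^2 + b'XY + c'Y^2$. Then $g \sim_{\rm val} F$, so by the equality just established $\gcd(a',b',c') = \gcd(A,B,C) = 1$; hence $g$ is primitive. This shows every form in $[F]_{\rm val}$ is primitive.

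\textbf{Expected obstacle.} There is essentially no obstacle here: the lemma is a formal consequence of the fact, already proved in the two preceding lemmas, that the content of a binary quadratic form depends only on its value set. The only point requiring any care is purely bookkeeping — making sure that the identity from the preceding lemma is invoked with the correct form (once for $f$, once for $F$), and that the definition of $\sim_{\rm val}$ as equality of value sets is used to bridge the two applications. No new ideas or computations are needed.
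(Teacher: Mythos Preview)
Your proof is correct and is exactly the paper's argument: the paper introduces Lemma~\ref{lContent} with the words ``This result directly gives,'' meaning it is an immediate consequence of the preceding identity $\gcd(a,b,c)=\gcd\{f(m,n):(m,n)\in\Z^2\}$ applied to each of $f$ and $F$.
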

 
The next proposition is the natural analogue of \cite[Theorem 7.7]{Cox} (that we will not recall since it is classical) for real quadratic orders. It contains the bijection between binary primitive quadratic forms and ideals that we need. Recall that, for discriminants $D > 0$, the form class group (for the composition of forms) $\mathcal{C}^+(D)$ of discriminant $D$ consists of primitive binary quadratic forms of discriminant $D$ up to the action of $\text{SL}_2(\Z)$. 

Let $I$ be a proper $\mathcal{O}$-ideal and let $I = [\alpha, \beta]$ be a $\Z$-basis. Write $\tau = \beta/\alpha$ and let $ax^2 + bx + c$ be the minimal polynomial of $\tau$ with $a > 0$, $a, b, c \in \Z$ and $\gcd(a, b, c) = 1$. We define
\[
f(x, y) = \text{sgn}(N(\alpha))(ax^2 + bxy + cy^2)
\]
with $\text{sgn}$ the sign function {and with the letter $N$ denoting the norm of an element or the norm of an ideal}. 

\begin{prop}
\label{t7.7}
Let $D > 0$ be a discriminant and let $\mathcal{O}$ be the unique order of discriminant~$D$.

\begin{itemize}
\item[(i)] The above map sending an ideal $I = [\alpha, \beta]$ to the binary quadratic form $f$ above induces a bijection between $\mathcal{{C\ell}^+(\mathcal{O})}$ and $\mathcal{C}^+(D)$. In particular, we have $h^+(D) = \vert \mathcal{C}^+(D) \vert$.

\item[(ii)] Let $f$ be a primitive binary form of discriminant $D$. Then there exists an invertible ideal $I$ in the unique order $\mathcal{O}$ of discriminant $D$ and a $\Z$-basis $[\alpha, \beta]$ of $I$ such that
\[
f(X, Y) = \frac{N(\alpha X + \beta Y)}{N(I)}.
\]
In particular, a prime $p \nmid 2D$ is represented by $f$ if and only if $(D/p) = 1$ and $p$ is the norm of an invertible ideal $J$ of $\mathcal{O}$ narrowly equivalent to $I$.
\end{itemize}
\end{prop}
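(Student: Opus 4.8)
The plan is to transcribe the classical bijection for imaginary quadratic orders, \cite[Theorem 7.7]{Cox}, to the real case. The only genuinely new ingredient is the bookkeeping of signs: the normalization ``$\beta/\alpha$ lies in the upper half plane'' used in the imaginary case is replaced by a fixed orientation convention on the $\Z$-basis $[\alpha,\beta]$ of $I$ relative to $\sqrt D \in \mathbb{R}_{>0}$ (for instance, one requires $(\alpha\overline{\beta}-\overline{\alpha}\beta)/\sqrt D > 0$), and the factor $\text{sgn}(N(\alpha))$ in the definition of $f$ records the orientation of the generator. This sign phenomenon is exactly what forces the \emph{narrow} class group $\Cl^+(\mathcal{O})$, rather than $\Cl(\mathcal{O})$, to be the correct source of the bijection. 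Two facts transfer verbatim from \cite[\S7]{Cox} and I would use them freely: if $I = \alpha[1,\tau]$ is a proper (equivalently, invertible) $\mathcal{O}$-ideal and $aX^2+bX+c$ is the primitive minimal polynomial of $\tau$ with $a>0$, then $[1,a\tau] = \Z[a\tau]$ is an order of discriminant $b^2-4ac$ — so $I$ being a proper $\mathcal{O}$-ideal forces $b^2-4ac=D$ — and $N(I)=|N(\alpha)|/a$.

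First I would verify that $I \mapsto [f]_{\SL}$ is well defined on $\Cl^+(\mathcal{O})$. The form $f$ is primitive by construction and has discriminant $D$ by the fact just quoted. Passing to another orientation-respecting $\Z$-basis of $I$ amounts to applying an element of $\SL$ to the column $(\alpha,\beta)$, which replaces $f$ by an $\SL$-equivalent form; and replacing $I$ by $\lambda I$ with $N(\lambda)>0$ — i.e. moving within the narrow ideal class — leaves $\tau=\beta/\alpha$ unchanged and leaves $\text{sgn}(N(\lambda\alpha))=\text{sgn}(N(\alpha))$ unchanged, hence leaves $f$ unchanged. Combining $N(\alpha X+\beta Y)=N(\alpha)(X+\tau Y)(X+\overline{\tau}Y)=\tfrac{N(\alpha)}{a}(aX^2+bXY+cY^2)$ (the orientation convention is what pins down the sign of the middle coefficient here) with $N(I)=|N(\alpha)|/a$ gives $f(X,Y)=N(\alpha X+\beta Y)/N(I)$, which is the first assertion of (ii).

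For bijectivity, surjectivity is obtained by running the construction backwards: given a primitive $f=aX^2+bXY+cY^2$ of discriminant $D$, first use $\SL$ to reduce to the case $a>0$ (an indefinite form represents a positive integer, hence also one primitively), then set $\tau=\tfrac{-b+\sqrt D}{2a}$ and $I=[\,a,\tfrac{-b+\sqrt D}{2}\,]=a[1,\tau]$; here $\alpha=a$ has $N(\alpha)=a^2>0$, and the construction returns $f$. Injectivity is the standard computation: if the forms attached to $I$ and $I'$ are $\SL$-equivalent, the associated quadratic irrationalities $\tau,\tau'$ are related by a fractional linear transformation coming from $\mathrm{PSL}_2(\Z)$, whence $I'=\lambda I$ with $N(\lambda)>0$ and $[I]=[I']$ in $\Cl^+(\mathcal{O})$. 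This proves the bijection, and in particular $h^+(D)=|\Cl^+(\mathcal{O})|=|\mathcal{C}^+(D)|$. Finally, for the representation statement in (ii): when $p\nmid 2D$, a primitive value $f(X_0,Y_0)=p$ means $\mu:=\alpha X_0+\beta Y_0\in I$ has $N(\mu)=p\,N(I)>0$, so $J:=(\mu)I^{-1}$ is an integral invertible ideal of norm $p$; unique factorization of ideals coprime to the conductor then forces $(D/p)=1$, and the narrow class of $J$ is the asserted one because the generator $\mu$ has positive norm. Conversely, an integral invertible $J$ of norm $p$ in that narrow class gives $JI=(\mu)$ with $N(\mu)>0$, and writing $\mu=\alpha X_0+\beta Y_0$ yields $f(X_0,Y_0)=N(\mu)/N(I)=p$.

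The main obstacle is not any isolated computation but the consistent handling of orientations throughout — of the basis $[\alpha,\beta]$ and of $\text{sgn}(N(\alpha))$ — so that the map is well defined on narrow classes and is a bijection onto \emph{all} primitive forms of discriminant $D$, with no spurious factor of two. Once that is set up correctly, the rest is a routine transcription of the imaginary-quadratic argument.
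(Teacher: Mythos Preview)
Your proposal is correct and is essentially the same approach as the paper's: the paper's own proof consists of a bare citation of \cite[p.~142 and Exercises 7.19--7.24]{Cox} together with \cite[Theorem 5.2.9]{Coh}, and your sketch is precisely a worked solution of those exercises. The only cosmetic slip is that your $J=(\mu)I^{-1}$ lands in the narrow class of $I^{-1}$ rather than of $I$, but replacing $J$ by its conjugate $\overline{J}$ (which has the same norm $p$) fixes this at once.
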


\begin{proof}
This follows from the remark on \cite[p. 142]{Cox} and the exercises in \cite[Exercises 7.19--7.24]{Cox}. {See 
also \cite[Theorem 5.2.9]{Coh} for the item {\it (i)}}.
\end{proof}

Recall that in the case where $D<0$, the analogue of Proposition \ref{t7.7} asserts a bijection between the set $\mathcal C^+(D)$ of classes of primitive positive definite quadratic forms with discriminant $D$ (modulo the action of $\SL$) and $\Cl^+(\mathcal O)$ ($= \Cl (\mathcal O)$ in that particular case). 

Fix an algebraic closure $\overline{\Q}$ of $\Q$. We will view all our number fields inside $\overline{\Q}$ from now on. Let $\mathcal{O}$ be a real quadratic order with fraction field $K$. The (narrow) ring class field $R$ of $\mathcal{O}$ is the largest abelian extension of $K$ with the following properties:
\begin{itemize}
\item the conductor of $R/K$ divides $f \infty_1 \infty_2$, where $f$ is the conductor of the order $\mathcal{O}$ and $\infty_1, \infty_2$ are the infinite places of $K$;
\item all but finitely many primes of $K$ inert over $\Q$ split completely in $R$.
\end{itemize}
The most important property of the ring class field for us is that there is a canonical isomorphism $\Gal(R/K) \cong \Cl^+(\mathcal{O})$ given by the Artin map. Furthermore, the field $R$ is Galois over $\Q$ and the exact sequence
\[
1 \rightarrow \Gal(R/K) \rightarrow \Gal(R/\Q) \rightarrow \Gal(K/\Q) \rightarrow 1
\]
splits with $\Gal(K/\Q)$ acting by inversion on $\Gal(R/K)$.

The following lemma gives the possible values of $h^+(4d)$ in terms of $h^+(d)$.

\begin{lemma}
\label{summer} 
Let $d$ be an integer satisfying the congruence $d \equiv 5 \bmod 8$. Then $h^+(4d)$ equals $h^+(d)$ or $3h^+(d)$.
\end{lemma}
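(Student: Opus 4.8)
The plan is to compare the orders $\OO_d$ and $\OO_{4d}$ directly, using the index-two inclusion $\OO_{4d} \subset \OO_d$. When $d \equiv 5 \bmod 8$, we have $\OO_d = \Z[\tfrac{1+\sqrt d}{2}]$ and $\OO_{4d} = \Z[\sqrt d]$, and the conductor of $\OO_{4d}$ in $\OO_d$ is the prime $(2)$, which is \emph{inert} in $\OO_d$ (since $d \equiv 5 \bmod 8$ means $x^2 - x - \tfrac{d-1}{4}$ is irreducible mod $2$). Now I would invoke the standard exact sequence relating the narrow class groups of an order and a suborder of conductor $\ff$: writing $\OO = \OO_d$ and $\OO' = \OO_{4d}$, there is an exact sequence
\[
1 \longrightarrow \OO^{\times,+}/\OO'^{\times,+} \longrightarrow \bigl(\OO/2\OO\bigr)^{\times} \big/ \bigl(\Z/2\Z\bigr)^{\times} \longrightarrow \Cl^+(\OO') \longrightarrow \Cl^+(\OO) \longrightarrow 1,
\]
which is the narrow-class-group analogue of \cite[Theorem 7.24]{Cox} (or \cite[Proposition 7.20--7.22]{Cox}, \cite[Proposition 5.2.13 and exercises]{Coh}). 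Since $(2)$ is inert, $\OO/2\OO \cong \FF_4$, so $(\OO/2\OO)^{\times}$ has order $3$ and the quotient by $(\Z/2\Z)^{\times}$ (which is trivial) still has order $3$. Hence the middle term is a group of order $3$.

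From the exact sequence, $h^+(4d) = |\Cl^+(\OO')|$ equals $|\Cl^+(\OO)| = h^+(d)$ multiplied by the order of the image of $(\OO/2\OO)^{\times}/(\Z/2\Z)^{\times} \to \Cl^+(\OO')$, which is the order of that group-of-order-$3$ modulo the image of the unit term $\OO^{\times,+}/\OO'^{\times,+}$. A group of order $3$ has only the subgroups of order $1$ and $3$, so the quotient has order $3$ or $1$ according to whether $\OO^{\times,+} = \OO'^{\times,+}$ or not. Therefore $h^+(4d) \in \{h^+(d), 3h^+(d)\}$, as claimed. (This dichotomy is exactly what governs whether $d \in \mathcal D_{5,8}$: the ratio is $1$ precisely when the fundamental totally positive unit of $\OO_d$ already lies in $\OO_{4d} = \Z[\sqrt d]$, matching Corollary \ref{369}.)

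I expect the only real subtlety to be setting up the unit/index exact sequence correctly in the \emph{narrow} setting and making sure the finite part is $(\OO/\ff\OO)^{\times}/(\Z/\ff\Z)^{\times}$ rather than $(\OO/\ff\OO)^{\times}/\mathrm{im}(\Z)^{\times}$ with the wrong normalization; for $\ff = 2$ this is harmless since $(\Z/2\Z)^\times$ is trivial, so the finite part is just $\FF_4^\times \cong \Z/3\Z$ and no confusion can arise. In fact, for the purposes of the present lemma one does not even need the full exact sequence: it suffices to know that $h(4d)/h(d)$ (or the narrow analogue) divides $|(\OO/2\OO)^\times| / |(\Z/2\Z)^\times| \cdot [\OO^\times : \OO'^\times]^{-1}$ and that $[\OO^{\times,+} : \OO'^{\times,+}]$ divides $3$ with quotient of $3$-power-free part dividing $3$ — in short, the ratio divides $3$ and is a positive integer, hence is $1$ or $3$. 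I would present the clean exact-sequence argument since it also identifies the $d$ for which the ratio is $1$, which is needed elsewhere.
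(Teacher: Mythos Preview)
Your proposal is correct and takes essentially the same approach as the paper: both invoke the exact sequence relating $\Cl^+(\OO_{4d})$ to $\Cl^+(\OO_d)$ (the paper cites \cite[Theorem 5.4]{KL} while you cite the Cox/Cohen formulation), observe that $d\equiv 5\bmod 8$ makes $2$ inert so that $(\OO_d/2\OO_d)^\ast\cong\FF_4^\ast$ has order $3$, and conclude that the index $h^+(4d)/h^+(d)$ is $1$ or $3$. The only cosmetic difference is that the paper writes the three-term sequence with the first term already quotiented as $(\OO_d/2\OO_d)^\ast/\mathrm{im}\,\OO_d^\ast$, whereas you write the four-term version with the unit quotient separated out; the content is the same.
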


\begin{proof} 
We apply the relative ray class group sequence from \cite[Theorem 5.4]{KL} with their $\mathcal{O}$ equal to $\mathcal{O}_{4d}$, their $\mathcal{O}'$ replaced with $\mathcal{O}_d$, $\mathfrak{m} = \mathcal{O}_{4d}$, $\mathfrak{m}' = \mathcal{O}_d$, $\Sigma' = \Sigma' = \varnothing$ and $\mathfrak{d} = 2\mathcal{O}_d$ to obtain
\begin{align}
\label{eRayClass*}
0 \rightarrow \frac{(\mathcal{O}_d/2\mathcal{O}_d)^\ast}{\text{im } \mathcal{O}_d^\ast} \rightarrow {\Cl}^+(\mathcal{O}_{4d}) \rightarrow {\Cl}^+(\mathcal{O}_d) \rightarrow 0,
\end{align}
where ${\rm im}\ \mathcal O^*_d$ denotes the image of $\mathcal O^*_d$ in $(\mathcal O_d/2\mathcal O_d)^*$. Since $d \equiv 5 \bmod 8$, the Kronecker symbol $(2/d)$ has the value $-1$, and $\mathcal{O}_d/2\mathcal{O}_d$ is isomorphic to the field $\mathbb F_4$. So the cardinality of the quotient appearing in the sequence \eqref{eRayClass*} is $1$ or $3$.
\end{proof}

We deduce an easy criterion based on the explicit knowledge of a fundamental unit $\epsilon_d$ of the order $\mathcal O_d$.

\begin{lemma} 
\label{midnight}
Let $d \equiv 5 \bmod 8$. Let $\mathcal O_d$ be the order with discriminant $d$, and let $\omega_d \in \mathcal O_d$ be such that $[1, \omega_d]$ 
is a $\Z$--basis of $\mathcal O_d$. Let $\epsilon_d$ be a fundamental unit of $\mathcal O_d^*$ written uniquely as $\epsilon_d := a +b \omega_d$. Then we have 
$$
h^+(d) = h^+(4d) \iff b \equiv 1 \bmod 2.
$$
\end{lemma}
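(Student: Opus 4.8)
The plan is to compute the cardinality of the quotient group $(\mathcal{O}_d/2\mathcal{O}_d)^\ast / \,\mathrm{im}\,\mathcal{O}_d^\ast$ appearing in the exact sequence \eqref{eRayClass*} of Lemma \ref{summer}, and to decide exactly when it is trivial. Since $d\equiv 5\bmod 8$, we have $(\mathcal{O}_d/2\mathcal{O}_d)^\ast \cong \mathbb{F}_4^\ast \cong \Z/3\Z$, so this group is either trivial or the whole of $\mathbb{F}_4^\ast$. By Lemma \ref{summer} the first case corresponds to $h^+(4d) = h^+(d)$ and the second to $h^+(4d) = 3h^+(d)$. Thus I must show: $h^+(d)=h^+(4d)$ if and only if the reduction map $\mathcal{O}_d^\ast \to \mathbb{F}_4^\ast$ is surjective, i.e. some unit of $\mathcal{O}_d$ maps to a generator of $\mathbb{F}_4^\ast$.

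Next I would make the unit group explicit. The unit group $\mathcal{O}_d^\ast$ is generated by $\pm 1$ and the fundamental unit $\epsilon_d = a + b\omega_d$. The element $-1$ reduces to $1$ in $\mathbb{F}_4^\ast$ (since $\mathbb{F}_4$ has characteristic $2$), so the image of $\mathcal{O}_d^\ast$ in $\mathbb{F}_4^\ast$ is precisely the cyclic subgroup generated by $\overline{\epsilon_d}$. Hence the reduction map is surjective if and only if $\overline{\epsilon_d} \neq 1$ in $\mathbb{F}_4^\ast$, equivalently $\epsilon_d \not\equiv 1 \bmod 2\mathcal{O}_d$. Now writing $\epsilon_d = a + b\omega_d$ and recalling that $\{1, \omega_d\}$ is a $\Z$-basis of $\mathcal{O}_d$, reduction mod $2\mathcal{O}_d$ sends $\epsilon_d$ to $\bar a + \bar b\,\bar\omega_d$, where $\{1, \bar\omega_d\}$ is an $\mathbb{F}_2$-basis of $\mathbb{F}_4$. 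Therefore $\epsilon_d \equiv 1 \bmod 2\mathcal{O}_d$ if and only if $a$ is odd and $b$ is even. So it remains only to check that $a$ is automatically odd, which follows from the norm equation: $N(\epsilon_d) = \pm 1$ is odd, while $N(a + b\omega_d) = a^2 + abt + b^2 N(\omega_d)$ for suitable integers depending on the choice of $\omega_d$; reducing mod $2$ using $d\equiv 5\bmod 8$ (so that $\mathcal{O}_d/2\mathcal{O}_d$ is a field, hence $b$ even forces the norm to be $\bar a^2$) shows $b$ even would force $a$ odd anyway, while $b$ odd is the case we want. In all cases $\overline{\epsilon_d}=1$ is equivalent to ($a$ odd and) $b$ even, i.e. to $b\not\equiv 1\bmod 2$. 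Combining, $h^+(d)=h^+(4d) \iff \overline{\epsilon_d}\ne 1 \iff b \equiv 1\bmod 2$, which is the claim.

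The only genuinely delicate point is bookkeeping the identification of $(\mathcal{O}_d/2\mathcal{O}_d)^\ast$ with $\mathbb{F}_4^\ast$ and tracking the image of $\mathcal{O}_d^\ast$ correctly — in particular being careful that the sequence \eqref{eRayClass*} really identifies the kernel of $\mathrm{Cl}^+(\mathcal{O}_{4d}) \to \mathrm{Cl}^+(\mathcal{O}_d)$ with this precise quotient, which is exactly what Lemma \ref{summer} already furnishes. Everything else is elementary: the reduction of the fundamental unit is a linear-algebra computation over $\mathbb{F}_2$, and the observation that $-1 \equiv 1$ in characteristic $2$ collapses $\mathcal{O}_d^\ast$ to the subgroup generated by $\overline{\epsilon_d}$. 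So I expect the main (minor) obstacle to be purely notational care with the structural isomorphism, not any substantive argument.
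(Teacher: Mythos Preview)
Your proposal is correct and takes essentially the same approach as the paper: the paper does not spell out a proof of this lemma but presents it as an ``easy criterion'' deduced from the exact sequence \eqref{eRayClass*} of Lemma \ref{summer}, and your argument fills in exactly those details by computing the image of $\mathcal{O}_d^\ast$ in $(\mathcal{O}_d/2\mathcal{O}_d)^\ast \cong \mathbb{F}_4^\ast$. One small streamlining: rather than invoking the norm equation to see that $b$ even forces $a$ odd, you can simply note that $\epsilon_d$ is a unit, so its image in $\mathbb{F}_4$ is nonzero, whence $b$ even gives $\overline{\epsilon_d}=\bar a\neq 0$ and thus $a$ odd automatically.
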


The following lemma gives information on the set of primes represented by a primitive quadratic form.

\begin{lemma}
\label{lDensity}
Let $f$ be primitive of discriminant $d$, positive or negative. Then the density of primes represented by $f$ is equal to either $1/h^+(d)$ or $1/(2h^+(d))$.
\end{lemma}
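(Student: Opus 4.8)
The plan is to use Proposition~\ref{t7.7}(ii) to translate the representation of primes by $f$ into a statement about which Frobenius classes in the Galois group of the narrow ring class field $R$ of the order $\mathcal{O} = \mathcal{O}_d$ allow $p$ to be a norm of an invertible ideal narrowly equivalent to the fixed ideal $I$ attached to $f$. First I would fix, via Proposition~\ref{t7.7}(ii), an invertible ideal $I$ with $f(X,Y) = N(\alpha X + \beta Y)/N(I)$, and recall that for a prime $p \nmid 2d$ one has: $p$ is represented by $f$ iff $(d/p) = 1$ and $p = N(J)$ for some invertible ideal $J$ of $\mathcal{O}$ with $[J] = [I]$ in $\Cl^+(\mathcal{O})$. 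The condition $(d/p)=1$ is equivalent to $p$ splitting in $K$, say $p\mathcal{O} = \pp\opp$, and then $p = N(\pp)$; so up to the finitely many ramified or non-coprime primes, $p$ is represented by $f$ iff $p$ splits in $K$ and the class of $\pp$ (or of $\opp$) in $\Cl^+(\mathcal{O})$ equals the class of $I$.

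Next I would invoke the canonical isomorphism $\Gal(R/K)\cong \Cl^+(\mathcal{O})$ from the Artin map, together with the splitting of the sequence $1\to\Gal(R/K)\to\Gal(R/\Q)\to\Gal(K/\Q)\to 1$ in which $\Gal(K/\Q)$ acts by inversion on $\Gal(R/K)$. Under this correspondence, the two classes $[\pp]$ and $[\opp]$ are inverse to each other in $\Cl^+(\mathcal{O})$, so the set of split primes $p$ represented by $f$ corresponds, via Chebotarev, to the conjugacy class (in $\Gal(R/\Q)$) of those elements of $\Gal(R/K)$ mapping to $[I]$ — but since conjugation by a lift of the nontrivial element of $\Gal(K/\Q)$ sends $g$ to $g^{-1}$, the relevant subset of $\Gal(R/K)$ is $\{[I], [I]^{-1}\}$. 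Chebotarev's density theorem then gives that the density of primes represented by $f$ equals $\#\{[I],[I]^{-1}\}/[R:\Q] = \#\{[I],[I]^{-1}\}/(2\,h^+(d))$, which is $1/(2h^+(d))$ if $[I]\ne[I]^{-1}$ and $1/h^+(d)$ if $[I]=[I]^{-1}$ (in particular if $[I]$ is trivial or $2$-torsion).

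For the case $d<0$ the same argument runs with the ordinary ring class field in place of the narrow one (here $\Cl^+ = \Cl$), using the definite analogue of Proposition~\ref{t7.7} recalled just after its statement; the splitting of the Galois sequence and the inversion action are the classical facts about ring class fields of imaginary quadratic orders, so only cosmetic changes are needed. I expect the main obstacle to be purely bookkeeping: making sure that discarding the finitely many primes dividing $2d$ (or failing the coprimality hypothesis in Proposition~\ref{t7.7}(ii)) does not affect the density, and carefully checking that "represented by $f$" matches "norm of an invertible ideal in the class of $I$" on the nose rather than up to the $\GL$-ambiguity $[I]\leftrightarrow[I]^{-1}$ — i.e. that it is genuinely a union of \emph{one or two} Frobenius classes and never three or more. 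This is exactly the dichotomy $[I]=[I]^{-1}$ versus $[I]\ne[I]^{-1}$, and once that is pinned down the density values $1/h^+(d)$ and $1/(2h^+(d))$ follow immediately.
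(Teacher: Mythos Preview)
Your proposal is correct and is precisely the argument the paper has in mind: the paper's proof is a one-line pointer to Proposition~\ref{t7.7} combined with Chebotarev for the ring class field $R$ of $\mathcal{O}$ (citing \cite[Theorem 9.12]{Cox} for $d<0$), and you have simply unpacked that pointer into the explicit computation with the conjugacy class $\{\sigma,\sigma^{-1}\}$ inside $\Gal(R/\Q)$. There is no substantive difference in approach.
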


\begin{proof}
This follows upon combining Proposition \ref{t7.7} with the Chebotarev density theorem applied to the ring class field of $\mathcal{O}$. For $d < 0$, this is the content of \cite[Theorem 9.12]{Cox}.
\end{proof}

\begin{remark} 
Lemma \ref{lDensity} implies that the form $f$ represents infinitely many primes. Such a statement can be interpreted as a highly generalized form of Dirichlet's Theorem about primes in arithmetic progressions. Schering uses Dirichlet's Theorem in his proof (see \cite[Page 255]{Sch}). Finally, Dirichlet \cite[Page 98]{Dir} was the first to prove that every properly primitive binary quadratic form $aX^2+2bXY+cY^2$ (see \S \ref{Scheringresult} for the conventions) represents infinitely many primes, but his result does not contain the asymptotic natural density of such primes, contrary to Lemma \ref{lDensity} above. We will use Lemma \ref{lDensity} again in \S \ref{preparing}.
\end{remark}

\subsection{\texorpdfstring{Preparing the forms $f$ and $F$}{Preparing the quadratic forms}} 
\label{preparing}
We now start from two forms 
\begin{equation}
\label{fandF}
f(X, Y) = aX^2 + bXY + cY^2 \text{ and } F(X, Y) = AX^2 + BXY + CY^2 \text{ with } f \sim_{\rm val} F,
\end{equation}
and investigate which properties one deduces for their respective discriminants $d$ and $D$.

By scaling the forms if necessary, we may assume that $f$ and $F$ are primitive (see Lemma \ref{lContent}). By Lemma \ref{lDensity} we may find a prime $p$ not dividing $2abcdABCD$ represented by $f$ and therefore also by $F$ thanks to our assumptions. This means that for some integers $m, n, M, N$ we have the equalities $p = f(m, n) = F(M, N)$. Since $p$ is prime, such a representation must be primitive, i.e. with $\gcd(m, n) = \gcd(M, N) = 1$. This means that
\[
f \sim_{\GL} pX^2 + b_1XY + c_1Y^2, \quad F \sim_{\GL} pX^2 + B_1XY + C_1Y^2
\]
for some integers $b_1$, $c_1$, $B_1$ and $C_1$ such that 
\begin{equation}
\label{d=D=}
d= b_1^2-4pc_1^2 \quad \text{ and } \quad D = B_1^2 -4 p C_1^2. 
\end{equation}
Since $f \sim_{\text{val}} F$, we get that
\[
pX^2 + b_1XY + c_1Y^2 \sim_{\text{val}} pX^2 + B_1XY + C_1Y^2
\]
and therefore that
\begin{align}
(2pX + b_1Y)^2 - dY^2 &= 4p^2X^2 + 4pb_1XY + 4pc_1Y^2\nonumber \\
&\sim_{\text{val}} 4p^2X^2 + 4pB_1XY + 4pC_1Y^2\nonumber \\
&= (2pX + B_1Y)^2 - DY^2.\label{474}
\end{align}

\subsection{The fundamental theorem} 
We will use the above tools to prove the following important step in the description of a class $\sim_{\rm val}$. We have 

\begin{theorem}
\label{tdD} 
Let $f$ and $F$ be two primitive forms such that $f \sim_{\rm val} F$. Then $d$ and $D$ have the same sign and we have that $d = D$ or $d = 4D$ or $4d = D$. Furthermore, if we have $0 < \vert d\vert < \vert D\vert$, then we have $d \equiv 5 \bmod 8$. 
\end{theorem}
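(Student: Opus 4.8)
\emph{Strategy.} I would argue in three stages: first the signs, then the constraint $d/D\in\{1,4,1/4\}$, and finally the congruence $d\equiv 5\bmod 8$. For the signs: if $d<0$ then $\pm f$ is positive definite, so $\text{Im}(f)$ lies in $\Z_{\ge 0}$ or in $\Z_{\le 0}$, whereas a form of positive discriminant represents integers of both signs and of arbitrarily large absolute value (on the region between the two real lines $f=0$ it has constant sign $-\mathrm{sgn}(a)$); hence $\text{Im}(F)=\text{Im}(f)$ forces $D<0$, and the reverse implication is symmetric. The case $d=0$, where $f$ is (a sign times) the square of a primitive linear form so that $\text{Im}(f)$ is the set of squares, is disposed of by a short direct check showing that $\text{Im}(F)=\text{Im}(f)$ then forces $D=0$. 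So from now on $d$ and $D$ are nonzero of the same sign.

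\emph{The ratio $d/D$.} By \S\ref{preparing} I may assume $f$ and $F$ primitive and, having fixed a large prime $p\nmid 2abcdABCD$ represented by both, that $f\sim_{\GL}f_1:=pX^2+b_1XY+c_1Y^2$ and $F\sim_{\GL}F_1:=pX^2+B_1XY+C_1Y^2$ with ${\rm disc}(f_1)=d$, ${\rm disc}(F_1)=D$ and $f_1\sim_{\rm val}F_1$; note $\leg{d}{p}=\leg{D}{p}=1$, so $p$ splits in both $\mathcal O_d$ and $\mathcal O_D$. Substituting $Y=2p$ in \eqref{474} gives $4p^2\big((X+B_1)^2-D\big)\in\text{Im}\big((2pX+b_1Y)^2-dY^2\big)=4p\cdot\text{Im}(f_1)$, so $p(t^2-D)\in\text{Im}(f_1)$ for every $t\in\Z$, and symmetrically $p(t^2-d)\in\text{Im}(F_1)$. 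Next, using Proposition \ref{t7.7} and the splitting of $p$, I would identify the set of $n\in\text{Im}(f_1)$ with $v_p(n)=1$, after division by $p$, as a union of value sets of primitive forms of discriminant $d$ lying in the principal genus (namely the classes $[\mathcal O_d]$ and $[\mathfrak p]^2$, where $\mathfrak p\mid p$). Combining this with the displayed inclusion, $\{t^2-D:t\in\Z,\ p\nmid t^2-D\}$ is contained in a union of value sets of principal-genus forms of discriminant $d$. Now for an odd prime $\ell$ with $\ell\mid d$ and $\ell\nmid D$, such forms represent only integers that are squares modulo $\ell$ (the relevant assigned character is trivial), whereas $\{t^2-D\bmod\ell\}$ is a nontrivial translate of the set of squares, hence not contained in it: contradiction. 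So every odd prime dividing $d$ divides $D$; running the symmetric argument and comparing the $\Z_\ell$-equivalence classes of $f_1$ and $F_1$ at each common odd prime divisor of $dD$ — governed by $v_\ell({\rm disc})$ and the square class of the unit part — forces $v_\ell(d)=v_\ell(D)$ for all odd $\ell$. With $d,D$ of the same sign and both $\equiv 0,1\bmod 4$, this yields $d/D\in\{1,4,1/4\}$.

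\emph{The congruence $d\equiv 5\bmod 8$.} Suppose $0<|d|<|D|$, so $D=4d$, and keep the notation above, so $\text{Im}(f_1)=\text{Im}(F_1)$ with ${\rm disc}(F_1)=4\,{\rm disc}(f_1)$. I would finish by a $2$-adic comparison: the ``$v_p=1$ part'' of $\text{Im}(f_1)$ is built from forms of discriminant $d$, that of $\text{Im}(F_1)$ from forms of discriminant $4d$, so their $2$-adic value sets must coincide. A case check of binary quadratic forms over $\Z_2$ shows that if $d\equiv 0\bmod 4$, or if $d\equiv 1\bmod 8$, these $2$-adic value sets differ on a positive density of integers — for instance, when $d\equiv 1\bmod 8$ the principal form of discriminant $d$ represents certain units $\equiv 1\bmod 8$ that are not $2$-adically represented by any discriminant-$4d$ form. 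Only $d\equiv 5\bmod 8$ survives.

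\emph{Main obstacle.} The hard part is the local bookkeeping in the last two stages: one must extract from the bare equality $\text{Im}(f_1)=\text{Im}(F_1)$ enough $\ell$-adic information — above all at $\ell=2$ and at the primes dividing the conductors of the two orders — to pin down the exact factorizations of $d$ and $D$ and the residue $d\bmod 8$. The complication is that the global value set of a binary form is in general strictly smaller than the intersection of its local value sets, so one cannot simply read off local equivalence; the remedy is that, by the effective Chebotarev/Dirichlet input behind Lemma \ref{lDensity} and Proposition \ref{t7.7}, the global value set still meets every residue class that is represented locally, after which the explicit case analysis of binary forms over $\Z_\ell$ goes through.
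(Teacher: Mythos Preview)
Your argument has a real gap at the sentence ``comparing the $\Z_\ell$-equivalence classes of $f_1$ and $F_1$ \dots\ forces $v_\ell(d)=v_\ell(D)$.'' The genus-theory step preceding it only shows that $d$ and $D$ have the same \emph{set} of odd prime divisors, not the same valuations. Your argument uses that when $\ell\mid d$ and $\ell\nmid D$ the set $\{t^2-D\bmod\ell\}$ is a nontrivial translate of the squares; but as soon as $\ell\mid D$ too, $t^2-D\equiv t^2\pmod\ell$ and no contradiction arises, so the method cannot separate $v_\ell(d)=1$ from $v_\ell(d)=3$, say. Nor can you fall back on ``$\Z_\ell$-equivalence of $f_1$ and $F_1$'': equality of value sets does \emph{not} imply local equivalence of the forms (if it did you would get $d=D$ outright, which is exactly what fails here), and your proposed remedy---that the global value set meets every locally represented residue class---is trivially true (take any $(x,y)$ realising the congruence) but does nothing to pin down $v_\ell(\mathrm{disc})$. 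The $2$-adic paragraph has the same shape: you assert that a ``case check over $\Z_2$'' rules out $d\equiv 0\bmod 4$ and $d\equiv 1\bmod 8$, but no check is performed, and the illustrative claim about units $\equiv 1\bmod 8$ is not connected to the specific forms $f_1,F_1$ in play. There are also unaddressed conductor issues: your identification of the ``$v_p=1$ part'' of $\text{Im}(f_1)$ with values of principal-genus forms goes through the ideal--form dictionary of Proposition~\ref{t7.7}, which only controls integers coprime to the conductor.

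The paper's proof bypasses genus theory and ideal classes entirely and works directly with the relation \eqref{489}. For an odd prime $q$, set $k=\max(v_q(d),v_q(D))$ and reduce \eqref{489} modulo $q^k$: one side becomes a pure square $(2pX+B_1Y)^2$, the other becomes $\alpha^2-d\beta^2$ with $v_q(d)<k$. Equality of images gives $\square_{q^k}-\{d\}=\square_{q^k}$; summing over $\square_{q^k}$ yields $q^k\mid d\cdot|\square_{q^k}|$, and since $q\nmid|\square_{q^k}|$ by the explicit count in Lemma~\ref{Sta}, this contradicts $v_q(d)<k$. The prime $2$ is then handled by an explicit case split on the parities of $b_1,B_1$ together with a computation of both images modulo $32$, which isolates $d\equiv 5\bmod 8$ and $D\equiv 4\bmod 8$. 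This is elementary throughout and avoids the principal-genus and conductor subtleties your route would have to confront.
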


Our proof of Theorem \ref{tdD} is based on elementary considerations on congruences.

\subsubsection{Some lemmas} 
Given a non-zero element $z \in \Z/2^\ell\Z$, we denote by $v_2(z)$ the $2$--adic valuation of $z$ given by the formula $v_2(z) =v_2(m)$, where $m$ is any integer such that $m \equiv z \bmod 2^\ell$. We also put $v_2(0) = \infty$. 

%

For a prime $q$ and an integer $k \geq 1$ we denote by $\square_{q^k}$ the set of squares modulo $q^k$. The following lemma gives an explicit formula for its cardinality.

\begin{lemma}[{\cite{Sta}}]
\label{Sta} 
We have the equalities
\begin{itemize}
\item if $q\geq 3$ is a prime
$$
\vert \square_{q^k} \vert =
\begin{cases} 
\displaystyle{\frac{q^{k+1}+q+2}{2 (q+1)} } &\textup{ when } 2\mid k\textup{ and } k\geq 2,
\\
\\
\displaystyle{\frac{q^{k+1}+2q+1}{2(q+1)}} &\textup{ when } 2\nmid k\textup{ and } k\geq 1,
\end{cases}
$$
\item if $q=2$ $$
\vert \square_{2^k} \vert =
\begin{cases} 
\displaystyle{\frac{2^{k -1} +4}{3} } &\textup{ when } 2\mid k\textup{ and } k\geq 2,
\\
\\
\displaystyle{\frac{2^{k -1} +5}{3}} &\textup{ when } 2\nmid k\textup{ and } k\geq 1.
\end{cases}
$$
\end{itemize}
\end{lemma}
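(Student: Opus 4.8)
The plan is to stratify the squares modulo $q^k$ according to their $q$-adic valuation, thereby reducing the problem to counting squares inside the unit groups $(\Z/q^m\Z)^*$ for $m \le k$. The starting observation is elementary: if $x^2 \not\equiv 0 \bmod q^k$, then, setting $y := x^2$, we have $v_q(y) = v_q(x^2) < k$, so $v_q(y)$ is even, say $v_q(y) = 2i$ with $2i < k$; writing $x = q^i u$ with $u$ a unit modulo $q^{k-2i}$ exhibits $y$ in the form $q^{2i} v$ with $v$ a square unit modulo $q^{k-2i}$, and conversely every such element is visibly a square. Since classes with distinct $q$-adic valuations are distinct, this yields
\[
\vert \square_{q^k} \vert = 1 + \sum_{\substack{i \ge 0 \\ 2i < k}} s\bigl(q^{\,k-2i}\bigr),
\]
where $s(q^m)$ denotes the number of squares in $(\Z/q^m\Z)^*$ and the summand $1$ accounts for $y = 0$.

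For $q \ge 3$ the group $(\Z/q^m\Z)^*$ is cyclic of even order $q^{m-1}(q-1)$, so $s(q^m) = q^{m-1}(q-1)/2$ for every $m \ge 1$, and the displayed identity becomes a finite geometric sum:
\[
\vert \square_{q^k} \vert = 1 + \frac{q-1}{2}\sum_{i=0}^{\lfloor (k-1)/2 \rfloor} q^{\,k-1-2i} = 1 + \frac{q^{k+1} - q^{\,k-1-2\lfloor (k-1)/2 \rfloor}}{2(q+1)}.
\]
It then remains only to note that the exponent $k-1-2\lfloor (k-1)/2 \rfloor$ equals $1$ when $k$ is even and $0$ when $k$ is odd, and to simplify the fraction in each case; this recovers the two formulas stated for odd $q$.

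The delicate part, and the only genuine obstacle, is $q = 2$, where $(\Z/2^m\Z)^*$ is not cyclic. Here I would compute $s(2^m)$ directly: one checks $s(2) = s(4) = 1$, while for $m \ge 3$ the squares in $(\Z/2^m\Z)^*$ are exactly the residues $\equiv 1 \bmod 8$, so $s(2^m) = 2^{m-3}$ — equivalently, via $(\Z/2^m\Z)^* \cong \Z/2 \times \Z/2^{m-2}$, the squaring map has image of index $4$. Substituting into the displayed identity, the moduli $2^{k-2i}$ that occur are $2^1, 2^3, \dots, 2^k$ when $k$ is odd and $2^2, 2^4, \dots, 2^k$ when $k$ is even; in both cases the sum is again geometric once the first term (the exceptional modulus $m \in \{1,2\}$, where $s$ does not follow the clean pattern $2^{m-3}$) is separated off, and a short summation — together with a direct verification of the base cases $k \le 3$ — gives $(2^{k-1}+5)/3$ for $k$ odd and $(2^{k-1}+4)/3$ for $k$ even. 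Throughout, essentially all the care goes into the bookkeeping of these small exceptional terms and of the parity of $k$; the structural input (cyclicity of $(\Z/q^m\Z)^*$ for odd $q$, and the $\Z/2 \times \Z/2^{m-2}$ decomposition for $q = 2$) is entirely standard.
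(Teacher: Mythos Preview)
Your argument is correct. The stratification by $q$-adic valuation is exactly the right decomposition, the bijection between nonzero squares of valuation $2i$ and square units modulo $q^{k-2i}$ is clean and well-stated, and your computations for both the odd-prime case and the $q=2$ case check out (I verified the geometric sums and the small base cases $k\le 3$ you mention).

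There is nothing to compare here: the paper does not supply its own proof of this lemma but simply quotes the result from Stangl \cite{Sta}. Your write-up is a self-contained proof of the cited fact, and the method you use --- reducing to the count of square units in $(\Z/q^m\Z)^*$ via the valuation filtration --- is the standard and natural one, essentially the approach taken in Stangl's original note as well.
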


\subsubsection{\texorpdfstring{Beginning of the proof of Theorem \ref{tdD}}{Beginning the proof}} 
Let $f$ and $F$ be as in \eqref{fandF}. Since an indefinite form takes positive and negative values, we deduce that $d$ and $D$ have the same sign. We may assume that $d \neq D$. Furthermore, by swapping $f$ and $F$ if necessary, we may further assume that $\vert d \vert < \vert D\vert$. In order to prove that $D = 4d$ and $d \equiv 5 \bmod 8$, it suffices to prove
\begin{equation}
\begin{cases}
v_q(d) = v_q(D) \text{ for all primes } q \geq 3, \label{cond1} \\
d \equiv 5 \bmod 8, \\
D \equiv 4\bmod 8.
\end{cases}
\end{equation}
By \eqref{474}, we have
\begin{equation}
\label{489}
(2pX + b_1Y)^2 - dY^2 \sim_{\text{val}} (2pX + B_1Y)^2 - DY^2
\end{equation}
for some prime $p$ with $p \nmid 2abcdABCD$ and some integers $b_1$ and $B_1$.

\subsubsection{\texorpdfstring{Study of \eqref{489} modulo powers of odd primes}{Reducing modulo odd prime powers}}
The equivalence \eqref{489} must also be true modulo every prime power. First consider an odd prime $q$ and suppose that $v_q(d) \neq v_q(D)$ to arrive at a contradiction. Put $k := \max(v_q(d), v_q(D))$. We now take both forms modulo $q^k$. Let us assume that $v_q(d) < v_q(D)$, the other case being similar. Then we get
$$
f\sim_{\rm val} F \bmod q^k,
$$
with
\[
f(X, Y) \equiv (2pX + b_1Y)^2 - dY^2 \bmod q^k, \quad F(X, Y) \equiv (2pX + B_1Y)^2 \bmod q^k.
\]
We claim that $f$ represents all residue classes of the form $\alpha^2 - d \beta^2$. Indeed, one may choose $Y := \beta$ and then solve for $X \bmod q^k$. Here we use that $2p$ is invertible modulo $q$, which follows from the fact that $q$ is odd and that $p$ does not divide $2abcdABCD$ by construction. Similarly, the image of $F$ is equal to the set of squares modulo $q^k$. To arrive at a contradiction we prove the following 

\begin{lemma}
There is no odd prime $q$, no integer $k \geq 1$, no integer $d \ne 0$ such that $v_q(d)< k$ and such that
$$
\{x^2 - dy^2 \bmod q^k : (x, y) \in \Z^2\} = \{z^2 \bmod q^k : z \in \Z\}.
$$
\end{lemma}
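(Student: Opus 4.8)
The plan is to argue by contradiction, reducing the claimed coincidence of value sets to a structural statement about the group of translations fixing the set of squares modulo $q^k$, and then to reach a contradiction from a divisibility property of cardinalities supplied by Lemma~\ref{Sta}. Write $S := \square_{q^k}$, viewed inside $\Z/q^k\Z$, and let $\bar d$ be the image of $d$; note that $\bar d \ne 0$ because $v_q(d) < k$. Since, as $x$ and $y$ run over $\Z$, the residues $x^2$ and $y^2$ run independently over $S$, the hypothesis of the lemma is precisely the set identity
\[
\{\, s - \bar d\, t \ :\ s, t \in S \,\} \;=\; S .
\]

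First I would specialize $t = 1 \in S$, which gives $S - \bar d \subseteq S$; since translation by $-\bar d$ is a bijection of $\Z/q^k\Z$, this forces $S - \bar d = S$. Hence $\bar d$ lies in the stabilizer subgroup $H := \{\, g \in \Z/q^k\Z : S + g = S \,\}$ of $(\Z/q^k\Z,+)$, so $\langle \bar d\rangle \subseteq H$. Now I would compute the additive order of $\bar d$: writing $d = q^{j} d'$ with $j = v_q(d)$, $\gcd(d',q) = 1$, and using the crucial hypothesis $j < k$, this order equals $q^{\,k-j}$. Therefore $q$ divides $|\langle \bar d\rangle| = q^{\,k-j}$, which divides $|H|$, which in turn divides $|S| = |\square_{q^k}|$ because the $H$-invariant set $S$ is a disjoint union of cosets of $H$. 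In particular $q \mid |\square_{q^k}|$.

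It remains to contradict this. By Lemma~\ref{Sta}, for the odd prime $q$ and any $k \ge 1$ the quantity $|\square_{q^k}|$ is a fraction whose numerator is either $q^{k+1}+q+2$ or $q^{k+1}+2q+1$ --- congruent to $2$ or to $1$ modulo $q$, hence prime to $q$ --- and whose denominator $2(q+1)$ is also prime to $q$ (as $q$ is odd). Since $|\square_{q^k}|$ is an integer, this yields $q \nmid |\square_{q^k}|$, contradicting the previous paragraph and completing the proof.

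I do not expect a genuine obstacle in carrying this out. The only points that need a little care are the order computation $\mathrm{ord}(\bar d) = q^{k - v_q(d)}$ (which is exactly where the hypothesis $v_q(d) < k$ is used) and the elementary observation that a translation-invariant subset of an abelian group is a union of cosets of its stabilizer. As a more computational alternative one could instead bound $\bigl|\{\, s - \bar d\, t : s,t \in S \,\}\bigr|$ from below directly using the cardinalities in Lemma~\ref{Sta} --- for instance via Kneser's addition theorem in $\Z/q^k\Z$ --- but the stabilizer argument is shorter and invokes Lemma~\ref{Sta} only through the single fact that $q \nmid |\square_{q^k}|$.
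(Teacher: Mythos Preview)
Your proof is correct and follows essentially the same line as the paper: both deduce $\square_{q^k} - \bar d = \square_{q^k}$ from the hypothesis and then contradict the fact, supplied by Lemma~\ref{Sta}, that $q \nmid |\square_{q^k}|$. The only cosmetic difference is that the paper reaches the divisibility by summing all elements of $\square_{q^k}$ (obtaining $q^k \mid d\cdot|\square_{q^k}|$ directly), whereas you package the same step via the stabilizer subgroup and coset decomposition to get $q \mid |\square_{q^k}|$.
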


\begin{proof} 
We will prove this by contradiction. So suppose that $\square_{q^k} - d\square_{q^k} = \square_{q^k}$ holds for some odd prime $q$, some integer $k \geq 1$ and some integer $d \neq 0$ satisfying $v_q(d) < k$. Since $1 \in \square_{q^k}$, we have the inclusion $\square_{q^k} - \{d\} \subseteq \square_{q^k}$. The sets $\square_{q^k} - \{d\}$ and $\square_{q^k}$ have the same cardinality. We deduce the equality of sets
$$
\square_{q^k} - \{d\} = \square_{q^k}.
$$
Summing over the elements $a \in \square_{q^k}$, we obtain the equalities
$$
\sum_{a \in \square_{q^k}} a = \sum_{a \in \square_{q^k} - \{d\}} a = \sum_{b \in \square_{q^k}} b - d \cdot \vert \square_{q^k} \vert \bmod q^k,
$$
from which we deduce that 
\begin{equation}
\label{508}
q^k \mid d \cdot \vert \square_{q^k} \vert.
\end{equation} 
The first item of Lemma \ref{Sta} implies that $q\nmid \vert \square_{q^k}\vert$. Returning to \eqref{508}, we obtain the desired contradiction since $v_q(d) < k$.
\end{proof}

We conclude that $v_q(d) = v_q(D)$ for any odd prime $q$, proving the first condition of \eqref{cond1}.

\subsubsection{\texorpdfstring{Study of \eqref{489} modulo powers of $2$}{Reducing modulo powers of 2}}
We now consider the case $q = 2$. Since $\vert d \vert < \vert D \vert$ and since $v_q(d) = v_q(D)$ for all odd primes $q$, we have
\begin{equation}
\label{532}
v_2(d) < v_2(D) := k.
\end{equation}
We return to \eqref{489} itself to exploit it under the form
\begin{equation}
\label{530}
(2pX + b_1Y)^2 - dY^2 \sim_{\rm val} (2pX + B_1Y)^2 -DY^2\bmod 2^{L},
\end{equation} 
where $L$ is at our disposal, not necessarily equal to $k$. We will now distinguish several cases according to the parities of $b_1$ and $B_1$.
 
\vskip .2cm
$\bullet$ {\bf Case $b_1$ even and $B_1$ odd.} In this situation, we have $v_2(d) \geq 2$ and $v_2(D) = 0$ by equation \eqref{d=D=}. This contradicts the inequality \eqref{532}.
 
\vskip .2cm
$\bullet$ {\bf Case $b_1$ and $B_1$ odd.} By \eqref{d=D=} we have $v_2 (d)=v_2 (D) =0.$ This contradicts \eqref{532}.

\vskip .2cm
$\bullet$ {\bf Case $b_1$ and $B_1$ even.} In this situation we have $d \equiv D \equiv 0 \bmod 4$. Let $b'_1 = b_1/2$, $B'_1 = B_1/2$, $d' = d/4$ and $D' = D/4$. Returning to \eqref{530}, we have
\begin{equation}
\label{544}
(pX + b'_1Y)^2 - d'Y^2 \sim_{\rm val} (pX + B'_1Y)^2 -D'Y^2\bmod 2^{L-2}.
\end{equation} 
Since $p$ is odd, we make a linear change of variables in \eqref{544} leading to
\begin{equation}
\label{53}
X^2 - d'Y^2 \sim_{\rm val} X^2 - D'Y^2\bmod 2^{L - 2}.
\end{equation} 
Then choosing 
$$
L - 2 = v_2(D') = v_2(D) - 2 = k - 2 := \ell
$$ 
we obtain that
\begin{equation}
\label{531} 
X^2 - d'Y^2 \sim_{\rm val} X^2 \bmod 2^{\ell}.
\end{equation} 
Write 
$$
v_2 (d') := \ell' \ (< \ell).
$$
We now count squares modulo $2^\ell$. Similarly to \eqref{508} we obtain 
$$
2^{\ell} \mid d' \cdot \vert \square_{2^\ell} \vert,
$$
which we rewrite as $2^{\ell- \ell'} \mid \vert \square_{2^\ell} \vert$. By the explicit values of $\vert \square_{2^\ell} \vert$ given in Lemma \ref{Sta}, we obtain that
\begin{equation}
\label{575}
\begin{cases} 
2^{\ell- \ell'} \mid {2^{\ell -1} +4 } &\text{ if } 2\mid \ell\text{ and } \ell\geq 2, \\
2^{\ell -\ell'} \mid {2^{\ell -1} +5} &\text{ if } 2\nmid \ell\text{ and } \ell\geq 1.
\end{cases}
\end{equation}
These divisibility properties are impossible for some cases of $\ell$ and $\ell'$. Recall that we have $ 0\leq \ell' < \ell$.
From \eqref{575} we deduce 
\begin{enumerate}
\item If $\ell$ is odd, the only possible value is $\ell = 1$ and $\ell' = 0$.
\item If $\ell$ is even, then
\begin{itemize}
\item[(i)] if $\ell = 2$, then $\ell' = 1$,
\item[(ii)] if $\ell \geq 4$, then $0 < \ell - \ell' \leq 2$.
\end{itemize}
\end{enumerate}

$\diamond$ We will now rule out item 1. by claiming that the case $(\ell, \ell') = (1, 0)$ never happens.

\begin{proof}[Proof of claim]
The relation \eqref{531} is too weak to get a contradiction, so we return to \eqref{53} with $L - 2 = 2$. We obtain that
$$
X^2 - d'Y^2 \sim_{\rm val} X^2 -2 (D'/2) Y^2 \bmod 4,
$$
where $d'$ and $D'/2 $ are odd. Observing that $X^2 - d'Y^2$ takes three different values modulo $4$, while $X^2 -2 (D'/2) Y^2$ takes four different values modulo $4$, finishes the proof of the claim.
\end{proof}

$\diamond$ We will now rule out item 2.(i) by claiming that the case $(\ell, \ell') = (2, 1)$ never happens.

\begin{proof}[Proof of claim]
We return to \eqref{53}, that we write under the shape
$$
X^2-2 Y^2 \sim_{\rm val} X^2 \bmod 4.
$$
This equivalence is false since the image of $X^2-2Y^2 \bmod 4$ is $\{0, 1, 2, 3\}$, but the image of $X^2$ is $\{0, 1\}$. 
\end{proof}

$\diamond$ We now rule out item 2.(ii) in two steps. Firstly, we claim that the case $(\ell, \ell')$ with $\ell$ even, $\ell \geq 4$ and $\ell -\ell' =1$ never happens.

\begin{proof}[Proof of claim]
If we had $\ell -\ell' =1$, then $\ell' = v_2(-d')$ would be odd. The number $-d'$ belongs to the image of $X^2-d'Y^2 \bmod 2^\ell$, but it does not belong to the image of $X^2$, since all the elements of this image have a $2$--adic valuation divisible by $2$ or equal to $\infty$ (see \eqref{531}).
\end{proof} 

$\diamond$ Finally, we claim that the case $(\ell, \ell')$ with $\ell$ even, $\ell \geq 4$ and $\ell - \ell' = 2$ never happens.

\begin{proof}[Proof of claim]
We proceed by proving a contradiction. So suppose that we are in the case where $D' = 4d'$ with $v_2(d') \geq 2$ even. We write
$$
d'= 2^{2\alpha} \delta',
$$
where $\alpha \geq 1$ is an integer and $\delta'$ is an odd integer. Choosing $L - 2 = 2\alpha + 2$ we see that \eqref{53} has the shape
$$
X^2 -2^{2\alpha} \delta' Y^2 \sim_{\rm val} X^2 \bmod 2^{2\alpha + 2}.
$$
In the left--hand side of this expression, fix $X = 0$ and $Y = 1$ to deduce that $-\delta' \equiv 1 \bmod 4$ for some odd integer $\gamma$. We obtain the equivalence
$$
X^2 + 2^{2\alpha} Y^2 \sim_{\rm val} X^2 \bmod 2^{2\alpha + 2}.
$$
Now choose $X = 2^\alpha$ and $Y = 1$ on the left--hand side. We obtain that the value $2^{2\alpha} +2^{2\alpha} = 2^{2\alpha + 1}$ is a square. This is a contradiction since $2\alpha + 1$ is odd and less than $2\alpha + 2$. 
\end{proof}

We investigated all the cases when $b_1$ and $B_1$ are both even. We conclude that these congruence conditions are not compatible with \eqref{532}.

\vskip .2cm
$\bullet$ {\bf Case $b_1$ is odd and $B_1$ even.} Then $d = b_1^2 - 4pc_1$ is odd (and thus $d \equiv 1 \bmod 4$) and $D = B_1^2 - 4pC_1$ is even (and hence $D \equiv 0 \bmod 4$). We study \eqref{530} with $L = 5$ and we will only manipulate congruences modulo $32$.

The odd squares modulo $32$ form the set ${\rm O_{dd}} := \{1, 9, 17, 25\}$, while the even squares form ${\rm E_{ven}} := \{0, 4, 16\}$. These sets satisfy the equalities
$$
\{8\ell\}{\rm O_{dd}} = \{8\ell\}, \, {\rm O_{dd}} + \{ 8\ell\} = {\rm O_{dd}}, \, 5{\rm O_{dd}} + \{ 8\ell\} = 5{\rm O_{dd}} \text{ and } \{8\ell\}{\rm E_{ven}} = \{0\} \bmod 32,
$$
for any integer $\ell$. The value set of $f$ modulo $32$ are those residue classes of the shape $\alpha^2 - d\beta^2$ with $\alpha \equiv \beta \bmod 2$ (see the left--hand side of \eqref{530}). We split the condition $d\equiv 1 \bmod 4$ into $d \equiv 1 + 8\ell$ or $d \equiv 5 + 8\ell $ modulo $32$ for some integer $\ell$.
\begin{enumerate}
\item[$\diamond$] If $d \equiv 1 \bmod 8$, the image of $f\bmod 32 $ is equal to 
\begin{align}
\left({\rm O_{dd}} - ({\rm O_{dd}} + \{8\ell\})\right) \bigcup \left({\rm E_{ven}} - ({\rm E_{ven}} + \{0\})\right)
&= \left({\rm O_{dd}} - {\rm O_{dd}} \right) \bigcup \left({\rm E_{ven}} - {\rm E_{ven}} \right)\nonumber \\
&= \left\{0, 8, 16, 24\right\} \cup \left\{0, 4, 12, 16, 20, 28\right\}\nonumber \\
&= \{0, 4, 8, 12, 16, 20, 24, 28\}.\label{f1}
\end{align}

\item[$\diamond$] If $d \equiv 5 \bmod 8$, the image of $f \bmod 32$ is equal to 
\begin{align}
\left({\rm O_{dd}} - (5{\rm O_{dd}} + \{8\ell\})\right) \bigcup \left({\rm E_{ven}} - (5{\rm E_{ven}} + \{0\})\right) &= \left\{4, 12, 20, 28\right\} \bigcup \left\{0, 4, 12, 16, 20, 28\right\}\nonumber \\
&= \{0, 4, 12, 16, 20, 28\}.\label{f5}
\end{align}
\end{enumerate}

We now study $F$ (see the right--hand side of \eqref{530}) modulo $32$. Since $B_1$ is even, the value set of $F$ modulo $32$ are the residue classes of the shape $\alpha^2 - D\beta^2$ with $\alpha$ even. This is the set ${\rm E_{ven}} - D ({\rm E_{ven}}\cup {\rm O_{dd}})$. Since $D\equiv 0 \bmod 4$, we have either $D = 8\lambda$ or $D = 4 + 8\lambda$, where $\lambda$ is some integer.
\begin{enumerate}
\item[$\diamond$] If $D\equiv 0 \bmod 8$, the image of $F \bmod 32$ is equal to 
$$
\left({\rm E_{ven}} - \{8\lambda\} {\rm E_{ven}}\right) \bigcup \left({\rm E_{ven}} - \{8\lambda\} {\rm O_{dd}}\right) = \left\{0, 4, 16\right\} \cup \left\{-8\lambda, 4 - 8\lambda, 16 - 8\lambda\right\}.
$$
Giving to the integer $\lambda$ the values $0, 1, 2, 3 \bmod 4$, we check that this formula never coincides with the right--hand side of \eqref{f1} or with the right--hand side of \eqref{f5}. We conclude that $f \not\sim_{\rm val} F$.
 
\item[$\diamond$] If $D \equiv 4 \bmod 8$, then $D = 4d$ (see the now proved first condition of \eqref{cond1} and the condition \eqref{532}). The image of $F \bmod 32$ is equal to 
\begin{align}
&\left({\rm E_{ven}} - \{4 + 8\lambda\} {\rm E_{ven}} \right) \bigcup \left({\rm E_{ven}} - \{4 + 8\lambda\}{\rm O_{dd}} \right)\nonumber \\
&= ({\rm E_{ven}} - 4{\rm E_{ven}}) \cup ({\rm E_{ven}} - 4 {\rm O_{dd}} - \{8\lambda\})\nonumber \\
&= \{0, 4, 16, 20 \} \cup \{-8\lambda, 12 - 8\lambda, 28 - 8\lambda\}\nonumber \\
&= \{0, 4, 16, 20, -8\lambda, 12 - 8\lambda, 28 - 8\lambda\}.\label{F5}
\end{align}
We give the values $\lambda \in \{0, 1, 2, 3\}$ to the parameter $\lambda$ in order to compare this set with the right--hand sides of \eqref{f1} and \eqref{f5}. For $\lambda \in \{0, 2\}$, \eqref{F5} coincides with \eqref{f5}. For $\lambda \in \{1, 3\}$, \eqref{F5} never coincides with \eqref{f1} or with \eqref{f5}. In conclusion, we checked that $d \equiv 5 \bmod 8$ and $D = 4d$. We recognize the last two conditions of \eqref{cond1}.
\end{enumerate}
 
\noindent This concludes the proof of Theorem \ref{tdD}.
 
\subsection{\texorpdfstring{More on the classes $\sim_{\rm val}$}{More on the classes val}}
Our next result is the analogue of \cite[Lemma 2.7, Remark 2.8]{Voight}, where the proof is given in the context of definite primitive forms representing almost the same primes.

\begin{lemma}
\label{lEqual}
Let $f$ and $F$ be primitive forms of discriminant $d = D$. Then $f \sim_{\textup{val}} F$ if and only if $f \sim_{\GL} F$. In particular, in any pair of primitive extraordinary forms $(f, F)$ with discriminants $d$ and $D$ we have $d \neq D$ and if we impose $\vert d \vert < \vert D \vert$, we have $d \equiv 5 \bmod 8$ and $D = 4d$. Finally, there is no primitive form which is simultaneously lower and upper extraordinary. 
\end{lemma}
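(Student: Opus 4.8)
The plan is to establish the equivalence in both directions and then read off the three supplementary assertions. The direction $f\sim_{\GL}F\Rightarrow f\sim_{\rm val}F$ is immediate, since any $\gamma\in\GL$ permutes $\Z^2$ and hence $\text{Im}(f\circ\gamma)=\text{Im}(f)$. For the converse I would argue as in the analogue \cite[Lemma 2.7]{Voight}, via Proposition \ref{t7.7} and the Chebotarev density theorem applied to the ring class field. Assuming $f\sim_{\rm val}F$ with $d=D$, I attach to $f$ and $F$ their narrow ideal classes $[I]^+,[I']^+\in\Cl^+(\mathcal O)$, where $\mathcal O=\mathcal O_D$, using Proposition \ref{t7.7}(ii) (and its analogue for $D<0$ recalled in the text, where ``narrow'' just means ``ordinary''). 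The crucial local computation is: for a prime $p\nmid 2D$ with $(D/p)=1$ and $\pp$ one of the two primes of $\mathcal O$ above $p$, one has $\pp\overline\pp=(p)$ with $p$ totally positive, so $[\overline\pp]^+=[\pp]^{-1}$; hence by Proposition \ref{t7.7}(ii) the form $f$ represents $p$ exactly when $[\pp]^+\in\{[I]^+,[I]^{-1}\}$, and $F$ represents $p$ exactly when $[\pp]^+\in\{[I']^+,[I']^{-1}\}$ (both conditions being independent of the choice of $\pp\mid p$).

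Since $\text{Im}(f)=\text{Im}(F)$ forces $f$ and $F$ to represent the same primes, and since Chebotarev applied to the ring class field $R$ of $\mathcal O$ (together with $\Gal(R/K)\cong\Cl^+(\mathcal O)$) produces, for every class $c\in\Cl^+(\mathcal O)$, infinitely many primes $p\nmid 2D$ with $[\pp]^+=c$, comparing the two descriptions yields $\{[I]^+,[I]^{-1}\}=\{[I']^+,[I']^{-1}\}$; in particular $[I']^+=[I]^+$ or $[I']^+=[I]^{-1}$. In the first case $f\sim_{\SL}F$ by Proposition \ref{t7.7}(i). In the second case the inverse class $[I]^{-1}$ corresponds under the bijection of Proposition \ref{t7.7}(i) to the opposite form $f(X,-Y)$, so $F\sim_{\SL}f(X,-Y)$, and since $(X,Y)\mapsto(X,-Y)$ belongs to $\GL$ we get $F\sim_{\GL}f$. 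Either way $f\sim_{\GL}F$.

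For the supplementary claims: if $(f,F)$ is a pair of primitive extraordinary forms then $f\sim_{\rm val}F$ but $f\not\sim_{\GL}F$, so the equivalence just shown forces $d\ne D$; ordering so that $|d|<|D|$, Theorem \ref{tdD} then gives $D=4d$ and $d\equiv 5\bmod 8$. Finally, a form simultaneously lower and upper extraordinary would have odd discriminant (being lower extraordinary makes its discriminant $\equiv5\bmod8$, by what was just proved) and even discriminant (being upper extraordinary makes its discriminant four times that of an associated lower form), which is absurd. The step I expect to require the most care is the bookkeeping with the narrow class group: correctly matching ``$f$ represents $p$'' with a union of at most two narrow ideal classes, deriving $[\overline\pp]^+=[\pp]^{-1}$ from totally positive generators, and identifying the form of the inverse class as the opposite form; once this is set up uniformly in the sign of $D$, Chebotarev and Theorem \ref{tdD} finish everything formally.
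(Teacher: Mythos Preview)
Your proposal is correct and follows essentially the same route as the paper's proof: both attach narrow ideal classes to $f$ and $F$ via Proposition \ref{t7.7}, use that $f\sim_{\rm val}F$ forces the same represented primes, invoke Chebotarev on the ring class field to conclude $\{[I],[I]^{-1}\}=\{[I'],[I']^{-1}\}$, and finish with the fact that the inverse (opposite) form is $\GL$--equivalent. The only cosmetic difference is that the paper phrases the Chebotarev step through Frobenius elements $\sigma,\tau\in\Gal(R/K)$ rather than directly through ideal classes, and for the final sentence the paper simply notes that no integer satisfies $4d\equiv 5\bmod 8$, which is exactly your odd/even contradiction.
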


\begin{proof}
It suffices to show that, if $d =D$, then $f \sim_{\textup{val}} F$ implies $f \sim_{\GL} F$. Since $f \sim_{\textup{val}} F$, these forms certainly represent the same primes. 

We use the notations introduced in \S \ref{auxiliary}. Suppose that $f$ corresponds to the ideal class $[I]$ via the correspondence of Theorem \ref{t7.7} and suppose that $[I]$ corresponds to $\sigma \in \Gal(R/K)$ via the Artin map. Then a prime $p \nmid 2d$ is represented by $f$ if and only if $\text{Frob}_p \in \{\sigma, \sigma^{-1}\}$.

If we assume that $F$ corresponds to $[J]$ and $\tau$ respectively, then the Chebotarev density theorem yields $\{\sigma, \sigma^{-1}\} = \{\tau, \tau^{-1}\}$ and therefore $\{I, I^{-1}\} = \{J, J^{-1}\}$. Thus we have that $f \sim_{\SL} F$ or $f \sim_{\SL} F^{-1}$, where $F^{-1}$ is the inverse of $F$. Since a form is always $\sim_{\GL}$-equivalent to its inverse, the first part of the lemma follows.

The second part follows from Theorem \ref{tdD} and from the fact that there is no integer $d$ such $4d \equiv 5 \bmod 8$.
\end{proof}

\begin{corollary}
\label{cCount}
Every equivalence class of $\sim_{\textup{val}}$ consists of one or two equivalence classes under $\sim_{\GL}$.
\end{corollary}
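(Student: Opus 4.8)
The plan is to combine Theorem \ref{tdD}, which restricts the discriminants that can occur inside a single class $[F]_{\rm val}$, with Lemma \ref{lEqual}, which pins down the forms of a fixed discriminant inside such a class up to $\GL$--equivalence. First I would dispose of the non-primitive case by a content argument. Given $F$ written as in \eqref{defF} with $\Delta := \gcd(a,b,c) \neq 0$, Lemma \ref{lContent} shows that every $G \in [F]_{\rm val}$ has the same content $\Delta$; writing $F = \Delta F_0$ and $G = \Delta G_0$ with $F_0, G_0$ primitive, injectivity of multiplication by $\Delta$ on subsets of $\Z$ gives ${\rm Im}(G_0) = {\rm Im}(F_0)$, that is $G_0 \in [F_0]_{\rm val}$. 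Since $(\Delta H) \circ \gamma = \Delta (H \circ \gamma)$, the assignment $G \mapsto G_0$ is a bijection $[F]_{\rm val} \to [F_0]_{\rm val}$ that both preserves and reflects $\sim_{\GL}$, so it suffices to treat the primitive case.

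So suppose $F$ is primitive, of discriminant $d$. By Lemma \ref{lContent} every $G \in [F]_{\rm val}$ is again primitive, and applying Theorem \ref{tdD} to the pair $(F, G)$ shows that ${\rm disc}\,G$ has the same sign as $d$ and ${\rm disc}\,G / d \in \{1, 4, 1/4\}$. The key observation is that at most two of these three discriminant values can actually be attained on $[F]_{\rm val}$: if one member had discriminant $d/4$ and another had discriminant $4d$, these two members would be $\sim_{\rm val}$ to each other, and a second application of Theorem \ref{tdD} would force their discriminants to differ by a factor lying in $\{1, 4, 1/4\}$; but $4d$ and $d/4$ differ by the factor $16$, a contradiction. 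Hence $[F]_{\rm val}$ meets either exactly one discriminant value, or exactly two, and in the latter case the two values differ by a factor of $4$.

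To conclude I would split into these two cases. If only one discriminant value $d'$ occurs, any two members $G, G'$ of $[F]_{\rm val}$ are primitive, $\sim_{\rm val}$ to each other, and have the same discriminant $d'$, hence $G \sim_{\GL} G'$ by Lemma \ref{lEqual}; so $[F]_{\rm val}$ is a single $\GL$--class. If two discriminant values $d'$ and $4d'$ occur, the same appeal to Lemma \ref{lEqual} shows that the members of $[F]_{\rm val}$ of discriminant $d'$ form a single $\GL$--class and those of discriminant $4d'$ form another, so $[F]_{\rm val}$ is the union of exactly two $\GL$--classes; either way the corollary follows. The only step with genuine content beyond quoting the two preceding results — and thus the place to be careful — is ruling out the simultaneous occurrence of the discriminants $d/4$ and $4d$; the rest is bookkeeping, notably the verification that the content reduction transports the $\GL$--action, which is immediate.
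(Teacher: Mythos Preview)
Your proof is correct and follows essentially the same route as the paper: the corollary is placed immediately after Lemma \ref{lEqual} with no separate proof, since it is the direct combination of Theorem \ref{tdD} (at most three possible discriminants) with Lemma \ref{lEqual} (equal discriminant forces $\GL$--equivalence, and no form is both lower and upper extraordinary). Your factor-$16$ argument to exclude the simultaneous occurrence of $d/4$ and $4d$ is a clean variant of the paper's reasoning in the last sentence of Lemma \ref{lEqual}, which instead uses the congruence observation that $d \equiv 5 \bmod 8$ and $4d \equiv 5 \bmod 8$ cannot both hold; both arguments are one-line applications of Theorem \ref{tdD}.
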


%
%
%

\begin{remark} 
The fact that the equivalence $[F]_{\rm val}$ breaks in one or two $\sim_{\GL}$ equivalence classes is also true for binary forms with integer coefficients, with fixed degree $d\geq 3$ and with non-zero discriminant, see \cite[Corollary 1.4]{FoKo1}.
\end{remark}

\section{An improved version of Schering's result}
In this section, we improve the content of Theorem \ref{tdD} in order to give a characterization of pairs of associated extraordinary forms. Thus we will meet the result of Schering given in Theorem \ref{Schering}, particularly the related comments given in \S \ref{CommentsofSchering}.

\begin{theorem}
\label{tMain}
Let $f$ and $F$ be primitive forms with $\vert d \vert \leq \vert D \vert$. Then $f \sim_{\textup{val}} F$ if and only if one of the following conditions hold
\begin{itemize}
\item $f \sim_{\GL} F$ or
\item $d \equiv 5 \bmod 8$, $D = 4d$, $h^+(d) = h^+(D)$ and $F \sim_{\GL} f^\dag$.
\end{itemize}
\end{theorem}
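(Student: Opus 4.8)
\textbf{Proof proposal for Theorem \ref{tMain}.}

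The plan is to prove the two implications separately. For the ``if'' direction, the first bullet is trivial by \eqref{inclusion}. For the second bullet, one must show that $d \equiv 5 \bmod 8$ together with $h^+(d) = h^+(4d)$ forces $f \sim_{\rm val} f^\dag$. Here I would argue via the prime-representation criterion of Proposition \ref{t7.7}(ii) and Lemma \ref{lDensity}: since $f$ has odd discriminant $d \equiv 5 \bmod 8$, all its coefficients are odd and $f \equiv X^2+XY+Y^2 \bmod 2$, so $f$ represents exactly the odd integers among its value set; meanwhile $f^\dag(X,Y) = f(2X,Y)$ has discriminant $4d$. Using the exact sequence \eqref{eRayClass*} from Lemma \ref{summer}, the hypothesis $h^+(d) = h^+(4d)$ means the map $\Cl^+(\mathcal{O}_{4d}) \to \Cl^+(\mathcal{O}_d)$ is an isomorphism, so every narrow ideal class of $\mathcal{O}_d$ pulls back uniquely to one of $\mathcal{O}_{4d}$. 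Combined with Proposition \ref{t7.7}(ii) and Chebotarev, this should show that the set of primes $p \nmid 2d$ represented by $f$ equals the set of odd primes represented by $f^\dag$; then one upgrades from primes to the full value set by multiplicativity of the norm form and a direct check that $f$ and $f^\dag$ both represent the relevant powers of $2$ and composite integers. An alternative, possibly cleaner route is to exhibit an explicit relation generalizing \eqref{dwDW}: for any $f$ of discriminant $d \equiv 5 \bmod 8$, after an $\SL$-change one may write $f \sim_{\SL} aX^2 + bXY + cY^2$ with $a$ odd, and then relate $f(2X,Y)$ and a suitable $\GL$-translate via the identity underlying \eqref{208}, reducing the claim to the class-number equality governing whether the ``half-discriminant'' form with the right parity exists.

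For the ``only if'' direction, suppose $f \sim_{\rm val} F$ with $|d| \le |D|$. By Lemma \ref{lEqual}, if $d = D$ then $f \sim_{\GL} F$ and we are in the first bullet. Otherwise Theorem \ref{tdD} gives $D = 4d$ and $d \equiv 5 \bmod 8$, so it remains to establish $h^+(d) = h^+(4d)$ and $F \sim_{\GL} f^\dag$. The discussion following \eqref{F=fcircgamma} (via Smith normal form) already shows that $F \sim_{\GL} g(2X,Y)$ for some $g \sim_{\GL} f$, i.e. $F \sim_{\GL} g^\dag$; one checks $g^\dag \sim_{\GL} f^\dag$ since $g \sim_{\SL} f$ or $g \sim_{\SL} f^{-1}$ and $(f^{-1})^\dag \sim_{\GL} f^\dag$. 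So the real content is the forced class-number equality. For this I would invoke Lemma \ref{summer}: $h^+(4d)$ is either $h^+(d)$ or $3h^+(d)$, and I must rule out the factor-$3$ case. Assume $h^+(4d) = 3h^+(d)$; then by Lemma \ref{lDensity} the density of primes represented by $F$ (discriminant $4d$) is $1/h^+(4d)$ or $1/(2h^+(4d))$, which is three times smaller than the corresponding density for $f$. But $f \sim_{\rm val} F$, and the primes represented by $f$ that are coprime to $2d$ are exactly the primes represented by $F$ coprime to $2d$ (a prime $p \nmid 2d$ is odd, hence $p \in {\rm Im}(f) \iff p \in {\rm Im}(F)$ since $F$ may only additionally gain even values), so these two prime densities must be equal — contradiction. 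This pins down $h^+(d) = h^+(4d)$ and completes the proof.

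The main obstacle I anticipate is the ``if'' direction: showing that the class-number equality genuinely forces ${\rm Im}(f) = {\rm Im}(f^\dag)$ rather than just agreement on primes. Agreement on primes coprime to $2d$ follows fairly mechanically from Proposition \ref{t7.7}(ii), the isomorphism in \eqref{eRayClass*}, and Chebotarev; but passing to all integers requires controlling (a) powers of $2$, (b) powers of ramified primes dividing $d$, and (c) non-primitively represented integers. Point (a) is where the congruence $d \equiv 5 \bmod 8$ does real work: $f \bmod 2^k$ and $f^\dag \bmod 2^k$ must be shown to have matching value sets, which is essentially the content of the case analysis in the proof of Theorem \ref{tdD} run in reverse, and I would cite or adapt the computations there (in particular the identity \eqref{dwDW} reduced mod $2^k$). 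For (c), I would use that an integer $n$ is represented by a primitive form of discriminant $d$ iff it is represented by some form in the genus/class determined by local conditions together with the global class-group constraint, reducing everything to the prime case by multiplicativity. A careful but routine local-global argument, organized prime-by-prime exactly as in Section 3, should close this gap; it is ``the hard part'' only in bookkeeping, not in ideas.
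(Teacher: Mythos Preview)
Your ``only if'' argument for $F \sim_{\GL} f^\dag$ has a real gap. You invoke the Smith normal form discussion following \eqref{F=fcircgamma}, but that discussion explicitly rests on Schering's condition~(1), namely the existence of an integer matrix $\gamma$ with $F = f \circ \gamma$. Theorem~\ref{tMain} assumes only $f \sim_{\rm val} F$, not containment, and the paper stresses in \S\ref{CommentsofSchering} that avoiding this hypothesis is precisely one of the points of the new proof. So you cannot cite that paragraph here. The paper's fix is to reverse the order of your two remaining tasks: first establish $h^+(d) = h^+(4d)$ by the density argument (which you do correctly), and then \emph{bootstrap from the already-proven ``if'' direction}: the claim $f \sim_{\rm val} f^\dag$ together with $f \sim_{\rm val} F$ gives $F \sim_{\rm val} f^\dag$, and since both have discriminant $4d$, Lemma~\ref{lEqual} yields $F \sim_{\GL} f^\dag$. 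No Smith normal form is needed.

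For the ``if'' direction your plan is in principle sound but far harder than necessary, and you correctly identify the upgrade from primes to all integers as the obstacle. The paper bypasses this entirely with a direct argument using the automorphism group of $f$. Write $f(X,Y) = N(\alpha X + \beta Y)/N(I)$ via Proposition~\ref{t7.7}(ii); then multiplication by $\epsilon_d^{2j}$ is an automorphism of $f$, and one computes its matrix explicitly in terms of $\epsilon_d^{2j} = c_j + d_j\tau$. The key observation is that $h^+(d) = h^+(4d)$ forces, via the exact sequence \eqref{eRayClass*}, the image of $\mathcal{O}_d^\ast$ to be all of $(\mathcal{O}_d/2\mathcal{O}_d)^\ast \cong \mathbb{F}_4^\ast$, so $\epsilon_d$ generates this group. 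Hence for suitable $j$ one has $d_j$ odd and $c_j$ of either parity, which lets one transform any representation $n = f(X,Y)$ with $X$ odd into one with the new $X$-coordinate even; thus $n \in f^\dag(\Z^2)$. The reverse inclusion $f^\dag(\Z^2) \subseteq f(\Z^2)$ is trivial. This gives $f \sim_{\rm val} f^\dag$ in one stroke, with no prime-by-prime or local--global bookkeeping at all.
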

 
The following corollary characterizes pairs of extraordinary classes when the lower extraordinary form is given.

\begin{corollary}
\label{lower-->upper} 
Let $d$ be an integer such that $d \equiv 5 \bmod 8$ and such that $h^+(d)=h^+(4d)$. Then every primitive quadratic form $f$ with discriminant $d$ is lower extraordinary. In particular, every pair $(f, f^\dag)$ is a pair of associated primitive extraordinary forms.
\end{corollary}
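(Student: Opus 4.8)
The plan is to read this corollary off from Theorem \ref{tMain} by taking $F := f^\dag$. First I would record the elementary congruence facts forced by $d \equiv 5 \bmod 8$: writing $f(X,Y) = aX^2 + bXY + cY^2$, the identity $d = b^2 - 4ac$ forces $b$ to be odd (an even $b$ would give $d \equiv 0 \bmod 4$), whence $b^2 \equiv 1 \bmod 8$ and $4ac = b^2 - d \equiv 4 \bmod 8$, so $ac$ is odd and thus $a$ and $c$ are both odd. Consequently $f^\dag(X,Y) = f(2X,Y) = 4aX^2 + 2bXY + cY^2$ has content $\gcd(4a, 2b, c) = 1$: any odd prime dividing it would divide $\gcd(a, b, c) = 1$, and $2 \nmid c$. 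Hence $f^\dag$ is a primitive form, of discriminant $(2b)^2 - 4 \cdot 4a \cdot c = 4(b^2 - 4ac) = 4d$.

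Next I would apply Theorem \ref{tMain} to the pair $(f, f^\dag)$, which is legitimate since both forms are primitive and $|{\rm disc}\, f| = |d| < |4d| = |{\rm disc}\, f^\dag|$. Using the "if" direction with $F = f^\dag$, I check the four clauses of the second bullet of Theorem \ref{tMain}: $d \equiv 5 \bmod 8$ holds by hypothesis; $D := {\rm disc}\, f^\dag = 4d$ by the computation above; $h^+(d) = h^+(D) = h^+(4d)$ holds by hypothesis; and $F \sim_{\GL} f^\dag$ holds trivially since $F = f^\dag$. Therefore Theorem \ref{tMain} yields $f \sim_{\rm val} f^\dag$, i.e. $f^\dag \in [f]_{\rm val}$.

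It then remains to interpret this conclusion. Since the discriminant is a $\GL$--invariant and ${\rm disc}\, f^\dag = 4d \ne d$, the form $f^\dag$ is not $\GL$--equivalent to $f$, so the inclusion $[f]_{\GL} \subseteq [f]_{\rm val}$ is strict and $f$ is extraordinary. By Corollary \ref{cCount} the class $[f]_{\rm val}$ consists of at most two $\sim_{\GL}$--classes; since it contains the two distinct classes $[f]_{\GL}$ and $[f^\dag]_{\GL}$, it equals $[f]_{\GL} \cup [f^\dag]_{\GL}$, so $(f, f^\dag)$ is a pair of associated primitive extraordinary forms. As ${\rm disc}\, f^\dag = 4\,{\rm disc}\, f$, the paper's convention names $f$ the lower and $f^\dag$ the upper member of the pair, which is exactly the assertion. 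I do not anticipate any real obstacle here: all the substance is contained in Theorem \ref{tMain}, and the only points demanding (minimal) care are checking that $f^\dag$ is primitive and correctly matching the roles of $f$, $F$ and the discriminants $d$, $D$ in the statement of that theorem.
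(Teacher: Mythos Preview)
Your proof is correct and follows essentially the same route as the paper: set $F := f^\dag$, verify the hypotheses of the second bullet of Theorem \ref{tMain}, and conclude that $f \sim_{\rm val} f^\dag$ while $f \not\sim_{\GL} f^\dag$ by comparing discriminants. You are simply more explicit than the paper in checking that $f^\dag$ is primitive and in invoking Corollary \ref{cCount} to finish the interpretation, but the substance is identical.
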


\begin{proof} 
Let $d$ satisfy the above hypotheses. Let $f$ be a form with discriminant $d$ and let $F = f^\dag$ be with discriminant $D$. Then, obviously $D = 4d$, $F \sim_{\GL} f^\dag$, and $F \not\sim_{\GL} f$ by computing the discriminants. By Theorem \ref{tMain} we deduce that $f \sim_{\rm val} F = f^\dag.$
\end{proof} 

We now pass to the proof of Theorem \ref{tMain} and we separate our analysis according to the sign of the discriminant. The case of positive discriminants is the most interesting one.

\subsection{Proof of Theorem \ref{tMain}: positive discriminants}
\subsubsection{Proof of the {\it if} part}
For the {\it if} part, we use Proposition \ref{t7.7} {\it (ii)}, with its notations, to write $f$ as
\[
f(X, Y) = \frac{N(\alpha X + \beta Y)}{N(I)}.
\]
The first case of Theorem \ref{tMain} is trivial, so suppose that we have the assumptions from the second case. We claim that the assumptions $d \equiv 5 \bmod 8$, $D = 4d$ and $h^+(d) = h^+(D)$ imply 
\begin{align}
\label{eUsualClaim}
f^\dag \sim_{\text{val}} f. 
\end{align}
Note that the claim readily implies the if part. Indeed, the claim \eqref{eUsualClaim} combined with the assumption $F \sim_{\GL} f^\dag$ implies, by transitivity, that $f \sim_{\rm val} F$.

So it suffices to prove the claim \eqref{eUsualClaim}. Writing $\epsilon_d$ for the fundamental unit of the unique order $\mathcal{O}_d$ of discriminant $d$, we see that the map
\[
\alpha X + \beta Y \mapsto \epsilon_d^{2j} (\alpha X + \beta Y)
\]
is an automorphism of $f$ as a consequence of the equality $N(\epsilon_d^2) =1$. Our goal is to get a more explicit handle on this automorphism. Let $\{1, \tau\}$ be a $\Z$-basis of $\mathcal{O}_d$, write $\epsilon_d^{2j} := c_j + d_j \tau$ with $c_j$ and $d_j\in \Z$, and define $n_1, n_2, n_3, n_4 \in \Z$ through
\begin{align*}
\alpha \tau &= n_1 \alpha + n_2 \beta \\
\beta \tau &= n_3 \alpha + n_4 \beta.
\end{align*}
Then we can rewrite
\begin{align*}
\epsilon_d^{2{j}}(\alpha X + \beta Y) &= (c_j + d_j\tau)(\alpha X + \beta Y) = \alpha c_j X + \beta c_j Y + \alpha \tau d_j X + \beta \tau d_j Y \\
&= \alpha c_j X + \beta c_j Y + (n_1 \alpha + n_2 \beta) d_j X + (n_3 \alpha + n_4 \beta) d_jY \\
&= \alpha(c_j X + n_1 d_j X + n_3 d_j Y) + \beta(c_j Y + n_2 d_j X + n_4 d_j Y) \\
&= \alpha((c_j + n_1 d_j) X + n_3d_j Y) + \beta(n_2 d_j X + (c_j + n_4 d_j)Y).
\end{align*}
Therefore we have
\begin{align}
\label{eAutf}
f(X, Y) = f((c_j + n_1 d_j) X + n_3d_j Y, n_2 d_j X + (c_j + n_4 d_j)Y).
\end{align}
We are now ready to show that $g(X, Y) := f(2X, Y) \sim_{\text{val}} f(X, Y)$. It is clear that 
\[
g(\Z^2) \subseteq f(\Z^2).
\]
For the reverse inclusion, suppose that $n = f(X, Y)$ for some integers $X$ and $Y$. If $X$ is even, then $n$ is also represented by $g$, namely $n = g(X/2, Y)$. So now suppose that $X$ is odd. We return to the the relative ray class group sequence \eqref{eRayClass*}. Since $h^+(d) = h^+(4d)$, $\epsilon_d$ generates $(\mathcal{O}{_d}/2\mathcal{O}{_d})^\ast$. Therefore for a good choice of $j$, we have $d_j \equiv 1 \bmod 2$ and we can choose $c_j$ freely in equation (\ref{eAutf}). Then for a good choice of $j$, we see that $(c_j + n_1 d_j) X + n_3 d_j Y$ is even, and this shows that $n$ is also represented by $g$.

\subsubsection{Proof of the {\it only if} part}
\label{4.1.2}
Let us prove the {\it only if} part. So assume that $f \sim_{\text{val}} F$. By Theorem \ref{tdD} we know that $D = d$ or $D = 4d$. If $d = D$, then we are in the first case by Lemma \ref{lEqual}. So suppose that $D = 4d$. Then $d \equiv 5 \bmod 8$ by Theorem \ref{tdD}. Noting that $(\mathcal{O}_d/2\mathcal{O}_d)^\ast$ has cardinality $3$ for $d \equiv 5 \bmod 8$, it follows from the sequence (\ref{eRayClass*}) that $ h^+(4d)=h^+(d)$ or $h^+(4d) =3 h(d)$. But since $f$ and $F$ represent the same density of primes, we deduce from Lemma \ref{lDensity} that $h^+(d) = h^+(4d)$. By the claim (\ref{eUsualClaim}), we get $f(X, Y) \sim_{\text{val}} f(2X, Y)$. Hence we have $F(X, Y) \sim_{\text{val}} f(2X, Y)$, which forces $F(X, Y) \sim_{\GL} f(2X, Y)$ by Lemma \ref{lEqual}.

\subsection{Proof of Theorem \ref{tMain}: case of negative discriminants}
\subsubsection{Proof of the {\it if} part} 
Recall the following formula for the class number of an order in terms of the class number of the associated quadratic field and the Kronecker symbol \cite[Theorem 7.24]{Cox}.

\begin{prop} 
\label{894}
Let $\mathcal O_d$ be the order of discriminant $d$ and with conductor $f$ in an imaginary quadratic field $K$ with discriminant $d_K = d/f^2 < 0$. We then have the equality
$$
h^+(\mathcal O_d) = \frac{h^+(\mathcal O_K) f}{[\mathcal O_K^*: \mathcal O_d^*]} \prod_{p \mid f} \left(1 - \left(\frac{d_K}{p}\right) \frac{1}{p}\right).
$$
\end{prop}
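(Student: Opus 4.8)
\textbf{Plan for the proof of Proposition \ref{894}.}

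The statement is the classical formula relating the class number of an order $\mathcal{O}_d$ inside an imaginary quadratic field $K$ to the class number of the maximal order $\mathcal{O}_K$, and it is exactly \cite[Theorem 7.24]{Cox} (in its narrow form, which in the imaginary case coincides with the ordinary form since $h^+ = h$ for $d < 0$). So the plan is essentially to reduce to that reference, but I will indicate how one would derive it from first principles. First I would recall the exact sequence relating the ideal class groups of $\mathcal{O}_d$ and $\mathcal{O}_K$: there is a short exact sequence
\[
1 \longrightarrow \mathcal{O}_K^\ast / \mathcal{O}_d^\ast \longrightarrow (\mathcal{O}_K / f\mathcal{O}_K)^\ast / (\Z/f\Z)^\ast \longrightarrow \Cl(\mathcal{O}_d) \longrightarrow \Cl(\mathcal{O}_K) \longrightarrow 1,
\]
which comes from comparing the ideal class group of $\mathcal{O}_d$ with that of $\mathcal{O}_K$ via the conductor ideal $f\mathcal{O}_K$ (see \cite[\S 7]{Cox} or \cite[Ch.~I, \S 12]{Coh}). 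Taking cardinalities in this sequence immediately gives
\[
h(\mathcal{O}_d) = \frac{h(\mathcal{O}_K)}{[\mathcal{O}_K^\ast : \mathcal{O}_d^\ast]} \cdot \bigl|(\mathcal{O}_K / f\mathcal{O}_K)^\ast\bigr| \Big/ \bigl|(\Z/f\Z)^\ast\bigr|.
\]

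The remaining step is to evaluate the index $\bigl|(\mathcal{O}_K / f\mathcal{O}_K)^\ast\bigr| / \bigl|(\Z/f\Z)^\ast\bigr|$ and identify it with $f \prod_{p \mid f}\bigl(1 - (d_K/p)\tfrac1p\bigr)$. By the Chinese remainder theorem both quantities are multiplicative in $f$, so it suffices to treat $f = p^k$ a prime power. For such an $f$ one computes $\bigl|(\mathcal{O}_K/p^k\mathcal{O}_K)^\ast\bigr|$ by splitting into the three cases according to the behaviour of $p$ in $K$: split ($(d_K/p) = 1$), inert ($(d_K/p) = -1$), or ramified ($(d_K/p) = 0$). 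In each case $\mathcal{O}_K/p^k\mathcal{O}_K$ is an explicit finite ring (a product of two copies of $\Z/p^k\Z$, a Galois ring, or a ramified quotient respectively), and a direct count of its unit group, divided by $\varphi(p^k) = p^{k-1}(p-1)$, yields exactly $p^k\bigl(1 - (d_K/p)/p\bigr)$. Reassembling the prime-power contributions via multiplicativity gives the stated product formula, and combining with the displayed expression for $h(\mathcal{O}_d)$ completes the proof; the passage to $h^+$ is trivial for negative discriminants.

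The only mildly delicate point — and the one I would expect to be the main obstacle if one insisted on a self-contained argument — is verifying the exactness of the four-term sequence above, in particular the identification of the kernel of $\Cl(\mathcal{O}_d) \to \Cl(\mathcal{O}_K)$ with the cokernel of $\mathcal{O}_K^\ast \to (\mathcal{O}_K/f\mathcal{O}_K)^\ast/(\Z/f\Z)^\ast$; this requires the fact that every ideal class of $\mathcal{O}_d$ contains an ideal prime to the conductor, together with the one-to-one correspondence between $\mathcal{O}_d$-ideals prime to $f$ and $\mathcal{O}_K$-ideals prime to $f$. Since all of this is carried out in detail in \cite[Theorem 7.24]{Cox} and \cite[Theorem 5.2.?]{Coh}, in the paper I would simply cite those references, as the statement already does.
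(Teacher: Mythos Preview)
Your proposal is correct and matches the paper's treatment: the paper does not give any argument for Proposition~\ref{894} beyond introducing it with ``Recall the following formula \ldots\ \cite[Theorem 7.24]{Cox}'', so your concluding remark that one would simply cite \cite[Theorem 7.24]{Cox} is exactly what is done. The exact-sequence outline you sketch is the standard derivation behind that reference and is entirely appropriate, but strictly more than the paper itself provides.
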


We deduce the following

\begin{prop}
\label{901} 
Let $d < 0$ satisfy $d \equiv 5 \bmod 8$. We then have the equality
$$
h^+(4d) = 3h^+(d) \cdot [\mathcal O_d^*: \mathcal O_{4d}^*]^{-1}.
$$
\end{prop}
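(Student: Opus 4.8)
\textbf{Proof plan for Proposition \ref{901}.}
The plan is to deduce this directly from Proposition \ref{894} by taking the ratio of the two formulas, one for the order $\mathcal O_{4d}$ and one for the order $\mathcal O_d$. First I would observe that since $d \equiv 5 \bmod 8$, we have $d \equiv 1 \bmod 4$, so $d$ is a valid discriminant; write $d = d_K \cdot \ff^2$ where $d_K < 0$ is the fundamental discriminant and $\ff$ is the conductor of $\mathcal O_d$. The key arithmetic point is that $4d = d_K \cdot (2\ff)^2$, so the conductor of $\mathcal O_{4d}$ is exactly $2\ff$ (here one uses that $d_K$ is fundamental, hence $4 d_K$ is not a valid "smaller" discriminant situation — more precisely $2\ff$ is genuinely the conductor because $d_K$ itself is fundamental). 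Both orders sit inside the same field $K$ with the same $h^+(\mathcal O_K)$.

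Next I would form the quotient $h^+(\mathcal O_{4d})/h^+(\mathcal O_d)$ using Proposition \ref{894}. The factors $h^+(\mathcal O_K)$ cancel. The conductor factors contribute $2\ff/\ff = 2$. The Euler-product factors $\prod_{p \mid \ff}(1 - (d_K/p)/p)$ appearing in $h^+(\mathcal O_d)$ cancel against the same factors in the product for $\mathcal O_{4d}$, leaving only the extra factor coming from the prime $p = 2$, which divides $2\ff$ but (since $d = d_K \ff^2 \equiv 5 \bmod 8$ forces $\ff$ odd and $d_K \equiv 5 \bmod 8$) does not divide $\ff$. That extra factor is $1 - (d_K/2)/2$. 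Since $d_K \equiv 5 \bmod 8$, the Kronecker symbol $(d_K/2) = (2/d_K) = -1$, so $1 - (d_K/2)/2 = 1 + 1/2 = 3/2$. Combining: $h^+(\mathcal O_{4d})/h^+(\mathcal O_d) = 2 \cdot (3/2) \cdot [\mathcal O_K^* : \mathcal O_d^*]/[\mathcal O_K^* : \mathcal O_{4d}^*]$. Since $K$ is imaginary quadratic and $d \equiv 5 \bmod 8$ has $|d| \geq 5$, the unit group $\mathcal O_d^*$ is $\{\pm 1\}$ (the extra roots of unity only occur for $d_K \in \{-3, -4\}$, excluded here because those are $\not\equiv 5 \bmod 8$), and likewise $\mathcal O_{4d}^* = \{\pm 1\}$, so in fact both index factors equal $[\mathcal O_K^*:\mathcal O_d^*]$ up to the stated correction; keeping the bookkeeping honest, $[\mathcal O_K^*:\mathcal O_d^*]/[\mathcal O_K^*:\mathcal O_{4d}^*] = [\mathcal O_{4d}^*:\mathcal O_d^*]^{-1} = [\mathcal O_d^*:\mathcal O_{4d}^*]^{-1}$ (the last equality holds since $\mathcal O_{4d}^* \subseteq \mathcal O_d^*$). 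This yields $h^+(4d) = 3 h^+(d) \cdot [\mathcal O_d^* : \mathcal O_{4d}^*]^{-1}$.

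The main obstacle is purely bookkeeping: correctly identifying the conductor of $\mathcal O_{4d}$ as $2\ff$ and tracking the unit-index factors so that the ratio of the two $[\mathcal O_K^*:\cdot]$ denominators collapses to $[\mathcal O_d^*:\mathcal O_{4d}^*]^{-1}$. One must also be slightly careful that the product $\prod_{p\mid 2\ff}$ in the numerator really does factor as $\bigl(\prod_{p\mid\ff}\bigr)\cdot\bigl(\text{the }p=2\text{ term}\bigr)$, which is legitimate precisely because $2 \nmid \ff$; this is where the hypothesis $d \equiv 5 \bmod 8$ (rather than merely $d \equiv 1 \bmod 4$) is used, together with the evaluation $(d_K/2) = -1$. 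No deep input is needed beyond Proposition \ref{894} and elementary properties of Kronecker symbols and unit groups of imaginary quadratic orders.
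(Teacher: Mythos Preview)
Your proof is correct and follows exactly the same approach as the paper: write $d = \ff^2 d_K$ with $\ff$ odd and $d_K \equiv 5 \bmod 8$ fundamental, apply Proposition \ref{894} to both $\mathcal O_d$ and $\mathcal O_{4d}$, and take the ratio so that $h^+(\mathcal O_K)$ and the Euler factors at odd primes cancel while the new factor at $p=2$ contributes $2 \cdot (1 - (d_K/2)/2) = 3$. One small slip worth flagging: your parenthetical claim that $-3 \not\equiv 5 \bmod 8$ is false (indeed $d=-3$ is exactly the case where $[\mathcal O_d^*:\mathcal O_{4d}^*]=3$, used in the very next proposition), but since you then give the correct general identity $[\mathcal O_K^*:\mathcal O_d^*]/[\mathcal O_K^*:\mathcal O_{4d}^*]=[\mathcal O_d^*:\mathcal O_{4d}^*]^{-1}$ via the tower $\mathcal O_{4d}^* \subseteq \mathcal O_d^* \subseteq \mathcal O_K^*$, the argument is unaffected.
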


\begin{proof} 
Write $d = f^2 \Delta$, where $f$ is an odd conductor and $\Delta$ is a negative fundamental discriminant $\equiv 5 \bmod 8$. So we have $4d = (2f)^2 \Delta$. Applying Proposition \ref{894} twice, we obtain
\begin{align*}
\frac{h^+(4d)}{h^+(d)} &= \frac{2f}{f} \cdot \frac{[\mathcal O_K^* : \mathcal O_{d}^*]}{[\mathcal O_K^*: \mathcal O_{4d}^*]} \cdot \left(1 - \left(\frac \Delta 2\right) \frac{1}{2}\right) \\
&= 3 \cdot [\mathcal O_{d}^*: \mathcal O_{4d}^*]^{-1},
\end{align*}
as desired.
\end{proof}

It is now easy to deduce

\begin{prop}
\label{-3unique} 
There exists only one discriminant $d < 0$ such that $d \equiv 5 \bmod 8$ and such that $h^+(4d) = h^+(d)$. This is $d = -3$.
\end{prop}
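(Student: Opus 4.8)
The plan is to reduce everything to Proposition \ref{901}, which already expresses $h^+(4d)$ in terms of $h^+(d)$ and the unit index $[\mathcal{O}_d^* : \mathcal{O}_{4d}^*]$ for $d < 0$ with $d \equiv 5 \bmod 8$. Since $h^+(4d) = h^+(d)$ if and only if $3 \cdot [\mathcal{O}_d^* : \mathcal{O}_{4d}^*]^{-1} = 1$, the statement is equivalent to determining all such $d$ for which $[\mathcal{O}_d^* : \mathcal{O}_{4d}^*] = 3$. Thus the whole proof becomes a question about unit groups of imaginary quadratic orders.

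First I would recall the classical structure of these unit groups: for an imaginary quadratic field $K$, the group $\mathcal{O}_K^*$ is a finite group of roots of unity of order $2$, $4$, or $6$, with order $6$ occurring precisely for $K = \mathbb{Q}(\sqrt{-3})$ and order $4$ precisely for $K = \mathbb{Q}(\sqrt{-1})$; moreover any non-maximal order $\mathcal{O} \subsetneq \mathcal{O}_K$ satisfies $\mathcal{O}^* = \{\pm 1\}$. Since the conductor of $\mathcal{O}_{4d}$ is twice the conductor of $\mathcal{O}_d$, we have $\mathcal{O}_{4d} \subsetneq \mathcal{O}_d$ and hence the inclusion $\mathcal{O}_{4d}^* \subseteq \mathcal{O}_d^*$, and in particular $\mathcal{O}_{4d}$ is never maximal, so $|\mathcal{O}_{4d}^*| = 2$. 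Therefore $[\mathcal{O}_d^* : \mathcal{O}_{4d}^*] = 3$ forces $|\mathcal{O}_d^*| = 6$, which in turn forces $\mathcal{O}_d$ to be the maximal order of $\mathbb{Q}(\sqrt{-3})$, i.e. $d = -3$; one checks directly that $-3 \equiv 5 \bmod 8$. Conversely, for $d = -3$ one has $4d = -12$ and $\mathcal{O}_{-12} = \mathbb{Z}[\sqrt{-3}]$ is a non-maximal order, so $|\mathcal{O}_{-12}^*| = 2$, the index is $6/2 = 3$, and Proposition \ref{901} gives $h^+(-12) = h^+(-3)$. This yields both existence and uniqueness.

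I do not expect a genuine obstacle: once the (standard) facts on units of imaginary quadratic orders are in place, the argument is essentially a one-line deduction from Proposition \ref{901}. The only point requiring a moment's care is that $d = -3 f^2$ with $f > 1$ odd also satisfies $d \equiv 5 \bmod 8$ and lies in $\mathbb{Q}(\sqrt{-3})$; but such $\mathcal{O}_d$ is a non-maximal order with $\mathcal{O}_d^* = \{\pm 1\}$, so the index is $1$ and $h^+(4d) = 3 h^+(d) \neq h^+(d)$, correctly excluding these cases.
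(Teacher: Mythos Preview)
Your proposal is correct and follows essentially the same approach as the paper: both reduce via Proposition \ref{901} to the condition $[\mathcal{O}_d^*:\mathcal{O}_{4d}^*]=3$, then invoke the classical description of unit groups of imaginary quadratic orders to conclude that $d=-3$ is the only possibility. Your treatment is slightly more explicit (noting that $\mathcal{O}_{4d}$ is never maximal, and separately disposing of $d=-3f^2$ with $f>1$), but the underlying argument is identical.
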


\begin{proof} 
Classical calculations show that, for negative $d$, the multiplicative group $\mathcal O_d^*$ has cardinality $6$, when $d = -3$, cardinality $4$ when $d = -4$, and cardinality $2$ when $d \leq -7$. In order to find the discriminants $d \equiv 5 \bmod 8$ such that $h^+(4d) = h^+(d)$ we apply Proposition \ref{901}. So we are led to consider the $d$ such that $3 \mid \vert \mathcal O_d^*\vert$. This only concerns the integer $-3$. We verify the property $[\mathcal O_{-3}^* : \mathcal O_{-12}^*] = 3$ since $\mathcal O_{-3}^* = \{e^{2\pi i \ell/6} : 0\leq \ell < 6\}$ and $\mathcal O_{12}^* = \{\pm 1\}$.
\end{proof}

We now recover the {\it if} part of Theorem \ref{tMain} in the case of negative discriminants. Suppose that $f$ has discriminant $d$ and $F$ has discriminant $D$ such that $d\equiv 5 \bmod 8$, $D = 4d < 0$ and $h^+(d) = h^+(4d)$ (we will not use the assumption $F\sim_{\GL} f^\dag$). By Proposition \ref{-3unique} we necessarily have $d = -3$ and $D = -12$. Since $h^+(-3) = 1$, there is exactly one class of positive definite primitive forms with discriminant $-3$, namely $[{\rm dw}]_{\SL}$. Since $h^+(-12) = 1$, there is exactly one class of positive definite primitive forms with discriminant $-12$, namely $[{\rm DW}]_{\SL}$. We recognize the Delone--Watson forms, which indeed satisfy $f \sim_{\rm val} F$ (see Theorem \ref{starting}).

\subsubsection{Proof of the {\it only if} part} 
The proof mimics \S \ref{4.1.2}. So assume that $f \sim_{\text{val}} F$. By Theorem \ref{tdD} we know that $D = d$ or $D = 4d$. If $d = D$, then we are in the first case by Lemma \ref{lEqual}. So suppose that $D = 4d$. Then $d \equiv 5 \bmod 8$ by Theorem \ref{tdD}. Noting that $(\mathcal{O}_d/2\mathcal{O}_d)^\ast$ has size $3$ for $d \equiv 5 \bmod 8$ (through the relation $(\frac{2}{d}) = -1$), it follows from the sequence (\ref{eRayClass*}) (or from Proposition \ref{901}) that $ h^+(4d) = h^+(d)$ or $h^+(4d) = 3 h^+(d)$. But since $f$ and $F$ represent the same density of primes, we deduce from Lemma \ref{lDensity} that $h^+(d) = h^+(4d)$. By Proposition \ref{-3unique} we deduce that $d = -3$ and $D = -12$. We again recognize the Delone--Watson forms $f = {\rm dw}$ and $F = {\rm DW}$ and we already know that they satisfy $F \sim_{\GL} f^\dag$ by \eqref{208}.

\section{Proof of Theorems \ref{F=fUf} and \ref{second}}
\label{recovering} 
In this section, we gather the different results proved above in order to prove Theorem \ref{F=fUf} and Theorem \ref{second}.

\subsection{Assembly for Theorem \ref{F=fUf}} 
Put together Theorem \ref{tMain} and Corollary \ref{cCount}.

\subsection{Assembly for Theorem \ref{second}} 
We put together Theorem \ref{tMain}, Corollary \ref{lower-->upper}, but the item {\it (3).a} (which may be interpreted as a dual form of Corollary \ref{lower-->upper}) still needs to be proven. This is the purpose of the following. 

\begin{prop}
Let $d > 0$ be such that $d \equiv 20 \bmod 32$ and $h^+(d) = h^+(d/4)$. Then all primitive forms $F$ of discriminant $d$ are upper extraordinary. Furthermore, there exists $\gamma = \gamma_F \in \GL$ such that the form $f := (F\circ \gamma)(X/2, Y)$ has integer coefficients and $([f]_{\GL}, [F]_{\GL})$ is a pair of associated primitive extraordinary classes.
\end{prop}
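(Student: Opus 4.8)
The plan is to deduce this from Corollary~\ref{lower-->upper} via the ``lower $\mapsto$ upper'' duality. Set $d' := d/4$, so that $d' \equiv 5 \bmod 8$ and the hypothesis $h^+(d) = h^+(d/4)$ becomes $h^+(d') = h^+(4d')$; hence $d'$ satisfies the hypotheses of Corollary~\ref{lower-->upper}. Given an arbitrary primitive form $F$ of discriminant $d = 4d'$, the goal is to fold $F$ into the shape $f^\dag$ for a suitable primitive $f$ of discriminant $d'$.

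First I would show that it suffices to find $\gamma = \gamma_F \in \GL$ such that $F \circ \gamma = A'X^2 + B'XY + C'Y^2$ with $4 \mid A'$ and $2 \mid B'$. Indeed, then $f := (F \circ \gamma)(X/2, Y) = \tfrac{A'}{4}X^2 + \tfrac{B'}{2}XY + C'Y^2$ has integer coefficients, $\mathrm{disc}\, f = (B'^2 - 4A'C')/4 = d'$, and $f$ is primitive, since the content of a form is a $\GL$--invariant (Lemma~\ref{lContent}) and $F$ is primitive. Moreover $f^\dag(X,Y) = f(2X,Y) = A'X^2 + B'XY + C'Y^2 = (F\circ\gamma)(X,Y)$, so $f^\dag \sim_{\GL} F$. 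As $d' \equiv 5 \bmod 8$ and $h^+(d') = h^+(4d')$, Corollary~\ref{lower-->upper} applied with discriminant $d'$ gives that $(f, f^\dag)$ is a pair of associated primitive extraordinary forms; since $\mathrm{disc}\, F = 4d' \ne d'$ we have $F \not\sim_{\GL} f$, so $F \sim_{\GL} f^\dag$ is upper extraordinary and $([f]_{\GL},[F]_{\GL})$ is a pair of associated primitive extraordinary classes, which is the assertion.

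It remains to produce $\gamma_F$, which I would do by showing that $F = AX^2+BXY+CY^2$ primitively represents a multiple of $4$. Given a primitive representation $4 \mid F(m,n)$ with $\gcd(m,n)=1$, completing $(m,n)$ to a matrix $\gamma_F \in \SL \subseteq \GL$ makes the leading coefficient of $F\circ\gamma_F$ equal to $F(m,n)\equiv 0 \bmod 4$, and then reducing $\mathrm{disc}(F\circ\gamma_F) = B'^2 - 4A'C' = 4d'$ modulo $16$ (using $4d' \equiv 20 \bmod 32$ and $16 \mid 4A'$) forces $B'^2 \equiv 4 \bmod 16$, hence $B' \equiv 2 \bmod 4$, so $2 \mid B'$ comes for free. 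To exhibit such a primitive representation, note that $\mathrm{disc}\, F \equiv 0 \bmod 4$ forces $B$ even, so $F \equiv AX^2 + CY^2 \bmod 2$, and primitivity forbids $A$ and $C$ being both even. If exactly one of $A,C$ is even, say $C$ (otherwise precompose with the swap $(X,Y)\mapsto(Y,X)\in\GL$), then substituting $B=2B_0$, $C=2C_1$ into $B_0^2 - 2AC_1 = d' \equiv 5 \bmod 8$ and reducing modulo $8$ forces $B_0$ odd and then $C\equiv 4 \bmod 8$, so $F(0,1)=C$ works. If $A$ and $C$ are both odd, substituting $B=2B_0$ into $B_0^2 - AC = d' \equiv 5 \bmod 8$ forces $B_0$ even (so $4 \mid B$) and $AC \equiv 3 \bmod 4$, which for odd $A,C$ is equivalent to $A+C\equiv 0 \bmod 4$; hence $F(1,1) = A+B+C \equiv 0 \bmod 4$, and $(1,1)$ works. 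The main obstacle is precisely this $2$--adic bookkeeping: one has to keep track of congruences modulo $8$ throughout, modulo $16$ for the parity of $B'$, and use the sharp hypothesis $d \equiv 20 \bmod 32$ (not just $d \equiv 4 \bmod 8$) --- it is exactly this congruence that separates item~\emph{3.a} from item~\emph{3.b} of Theorem~\ref{second}. Once it is settled, the rest is formal manipulation around Corollary~\ref{lower-->upper} and the identity $f^\dag = F\circ\gamma_F$.
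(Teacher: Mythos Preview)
Your proof is correct and takes a genuinely different route from the paper's. The paper argues by counting: it shows that the map $\Phi \colon [f]_{\GL} \mapsto [f^\dag]_{\GL}$ from primitive $\GL$--classes of discriminant $d/4$ to those of discriminant $d$ is well-defined and injective (using Theorem~\ref{F=fUf} and Lemma~\ref{lEqual}), and then invokes the equality $h^\star(d/4) = h^\star(d)$ (coming from the isomorphism $\Cl^+(d/4) \cong \Cl^+(d)$, a consequence of \eqref{eRayClass*}) to conclude that $\Phi$ is a bijection, so every class of discriminant $d$ is hit. Your approach is instead a direct $2$--adic construction: given $F$, you locate a primitive point where $F$ takes a value divisible by $4$ and use it to build $\gamma_F$ explicitly. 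This is more elementary (no appeal to the group isomorphism or to $h^\star$) and constructive, at the price of a short case analysis modulo $8$; it also makes transparent \emph{why} the congruence $d \equiv 20 \bmod 32$ is the right one. One small remark: your primitivity argument for $f$ is a touch elliptical---the content of $f$ divides the content of $f^\dag = F\circ\gamma_F$, and the latter is $1$ by $\GL$--invariance of the content (stated in \S\ref{intro}; Lemma~\ref{lContent} is about $\sim_{\rm val}$, which is stronger than you need here).
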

 
\begin{proof} 
Recall that $h^\star(d)$ is defined in \S \ref{different}. It follows from \eqref{eRayClass*} that the equality $h^+(d/4) = h^+(d)$ implies that $\CL^+(d/4) \cong \Cl^+(d)$. This forces the equality $h^\star(d/4) = h^\star(d)$.

Now consider the mapping $\Phi$ that sends a primitive form $f(X, Y)$ of discriminant $d/4$ to $f(2X, Y)$. So we have $\Phi (f) = f^\dag$. Since $d/4 \equiv 5 \bmod 8$, we check that $f(X, Y) = aX^2 + bXY + cY^2$ is such that $a$ and $c$ are odd. Thus $f^\dag$ is primitive as well.

We now check that $\Phi$ gives a well-defined map from the set of forms (with discriminant $d/4$) up to the action of $\text{GL}_2(\Z)$. So we have to prove that if $f_1 \sim_{\text{GL}_2(\Z)} f_2$, then $f_1^\dag \sim_{\text{GL}_2(\Z)} f_2^\dag $. For this we note that $f_i \sim_{\text{val}} f_i^\dag$ by Theorem \ref{F=fUf} so $f_1^\dag \sim_{\text{val}} f_2^\dag$ by the assumptions and transitivity. But two forms of the same discriminant that are $\sim_{\text{val}}$--equivalent are $\sim_{\text{GL}_2(\Z)}$--equivalent by Lemma \ref{lEqual}. So $f_1^\dag \sim_{\text{GL}_2(\Z)} f_2^\dag$.

Finally, we have to check that $\Phi$ is injective (since the sets in consideration have the same cardinality $h^\star(d/4) = h^\star(d)$, we get the desired bijection). Consider two primitive quadratic forms $f_1$ and $f_2$ with discriminant $d/4$ such that $f_1^\dag \sim_{\text{GL}_2(\Z)} f_2^\dag$. Then we have $f_1^\dag \sim_{\text{val}} f_2^\dag$. By Theorem \ref{second} {\it (2).a} we have $f_i \sim_{\rm val} f_i^\dag$ ($i \in \{1, 2\}$). This implies that $f_1 \sim_{\rm val} f_2$. Then $f_1 \sim_{\text{GL}_2(\Z)} f_2 $ because the forms $f_i$ have the same discriminant and the same image (Lemma \ref{lEqual}). The proof of item {\it (3).a} is finished.
\end{proof}
  
\section{Proof of Corollary \ref{corollary1.11}}
\label{sunny} 
This corollary is trivial when $d < 0$ since $h^+(d) = h(d)$. For $d \equiv 0$ or $1 \bmod 4$, let $\epsilon_{d}$ be a fundamental unit of $\mathcal O_d$. Then Corollary \ref{corollary1.11} is a direct consequence of the following proposition.

\begin{prop}
Let $d \equiv 1 \bmod 4$ be a non square integer. Then we have
$$
N(\epsilon_d) = -1 \iff N(\epsilon_{4d})= -1.
$$
\end{prop}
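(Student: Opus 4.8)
The plan is to establish the stronger equality $N(\epsilon_{4d}) = N(\epsilon_d)$, which obviously contains the asserted equivalence. We may assume $d > 0$, since when $d < 0$ every unit of an imaginary quadratic order is a root of unity and hence has norm $1$, so both sides of the equivalence are then false.

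First I would record that, since $d$ is odd, $\mathcal{O}_{4d} = \Z + 2\mathcal{O}_d$: writing $[1, \omega_d]$ for a $\Z$-basis of $\mathcal{O}_d$ with $\omega_d = \tfrac{1+\sqrt d}{2}$, the $\Z$-module $\Z + 2\mathcal{O}_d = \Z + \Z(2\omega_d) = \Z[\sqrt d]$ is a ring of discriminant $4d$, hence coincides with $\mathcal{O}_{4d}$ by uniqueness of the order of a given discriminant. In particular $\mathcal{O}_{4d} \subseteq \mathcal{O}_d$, both orders have fraction field $K = \Q(\sqrt d)$, and by Dirichlet's unit theorem $\mathcal{O}_d^\ast = \{\pm 1\} \times \langle \epsilon_d \rangle$ and $\mathcal{O}_{4d}^\ast = \{\pm 1\} \times \langle \epsilon_{4d}\rangle$. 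Consequently $\epsilon_{4d} = \pm \epsilon_d^{\,k}$, where $k \geq 1$ is the index of $\langle \epsilon_{4d}\rangle$ in $\langle \epsilon_d\rangle$, so $N(\epsilon_{4d}) = N(\epsilon_d)^k$, and since $N(\epsilon_d) \in \{\pm 1\}$ the whole matter reduces to proving that $k$ is odd.

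To do this I would pass to $R := \mathcal{O}_d / 2\mathcal{O}_d$ and exploit that $R$ has characteristic $2$. Because $-1 \equiv 1$ in $R$, an element $\alpha \in \mathcal{O}_d^\ast$ lies in $\mathcal{O}_{4d} = \Z + 2\mathcal{O}_d$ if and only if its image $\overline{\alpha}$ lies in the image of $\Z$ in $R$, a condition unchanged by $\alpha \mapsto -\alpha$; hence $k$ is the least positive integer with $\overline{\epsilon_d}^{\,k}$ in the image of $\Z$. A direct computation with the minimal polynomial of $\omega_d$ (equivalently, with the Kronecker symbol $\leg{2}{d}$) gives $R \cong \FF_2 \times \FF_2$ when $d \equiv 1 \bmod 8$ and $R \cong \FF_4$ when $d \equiv 5 \bmod 8$, and in both cases the image of $\Z$ is the prime subring $\FF_2$.

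In the case $d \equiv 1 \bmod 8$ the group $R^\ast = (\FF_2 \times \FF_2)^\ast$ is trivial, so $\overline{\epsilon_d} = 1$ lies in the image of $\Z$ and $k = 1$. In the case $d \equiv 5 \bmod 8$ the group $R^\ast = \FF_4^\ast$ is cyclic of order $3$ while the only unit in the image $\FF_2$ of $\Z$ is $1$; hence $\overline{\epsilon_d}^{\,m}$ lies in the image of $\Z$ exactly when $\overline{\epsilon_d}^{\,m} = 1$, i.e. when $\ord(\overline{\epsilon_d}) \mid m$, so $k = \ord(\overline{\epsilon_d}) \in \{1, 3\}$. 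In all cases $k$ is odd, whence $N(\epsilon_{4d}) = N(\epsilon_d)$, finishing the proof. No step here is genuinely deep; the only points that require care are the identification $\mathcal{O}_{4d} = \Z + 2\mathcal{O}_d$ (which holds even when $\mathcal{O}_d$ is non-maximal) and the use of characteristic $2$ to render the sign ambiguity in $\epsilon_{4d} = \pm\epsilon_d^{\,k}$ harmless.
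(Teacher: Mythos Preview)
Your proof is correct and follows essentially the same route as the paper: both arguments rest on the identification $\mathcal{O}_{4d} = \Z + 2\mathcal{O}_d$ and the fact that $(\mathcal{O}_d/2\mathcal{O}_d)^\ast$ has order dividing $3$, which forces $\epsilon_d^3 \in \mathcal{O}_{4d}$. The paper stops there (since $N(\epsilon_d^3) = N(\epsilon_d)$ already gives the needed implication), whereas you go a bit further and pin down the index $k \in \{1,3\}$ to obtain the marginally stronger equality $N(\epsilon_{4d}) = N(\epsilon_d)$; but the underlying idea is the same.
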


\begin{proof}
By the inclusion $\mathcal O_{4d} \subset \mathcal O_d$, we have the implication $N(\epsilon_{4d}) = -1 \Rightarrow N(\epsilon_d) = -1$. It remains to prove the converse. For this, one directly checks that $\epsilon_d^3 \in \mathcal{O}_{4d}$ by exploiting that $(\mathcal{O}_d/2\mathcal{O}_d)^\ast$ is a group of order $3$ and that $1 + 2\mathcal{O}_d \subset \mathcal{O}_{4d}$.
\end{proof}

\section{Proof of Theorem \ref{S5}}
\label{Statistics}
In \cite {Stevenhagen} Stevenhagen investigates some questions posed by Eisenstein \cite{Ei}, particularly 
concerning the set 
\begin{equation}
\label{defEisenstein}
\mathcal E: = \{d > 0 : d \text{ squarefree}, \, d \equiv 5 \bmod 8, \, h^+(4d) = 3 h^+(d)\},
\end{equation}
that Stevenhagen calls {\it Eisenstein set}. Since $d \equiv 5 \bmod 8$ implies $h^+(4d) \in \{h^+(d), 3 h^+(d)\}$ (see Lemma \ref{summer}), the set $\mathcal S_{5, 8}$, that we are interested in, is complementary to the set $\mathcal E$ in $\mathcal G_{5, 8}$ (see \eqref{S58} and \eqref{defcalG58} for the notations). Stevenhagen proved the following inequality for the upper density of $\mathcal E$:

\begin{lemma}[{\cite[Theorem p.261]{Stevenhagen}}]
We have 
$$
\underset{x \rightarrow \infty}{{\rm lim \sup}} \, \frac{\vert \{d\in \mathcal E : d\leq x\} \vert}{G_{5, 8}(x)} \leq \frac 12.
$$
\end{lemma}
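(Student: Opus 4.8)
Since this inequality is precisely Stevenhagen's theorem \cite[Theorem p.~261]{Stevenhagen}, one legitimate option is just to cite it; here is how I would instead reconstruct the argument. The first step is to rephrase membership in $\mathcal E$ in terms of the fundamental unit alone. Combining Lemma \ref{summer} with the exact sequence \eqref{eRayClass*}, and using that for $d \equiv 5 \bmod 8$ the group $(\mathcal O_d/2\mathcal O_d)^\ast$ is cyclic of order $3$ with the image of $\mathcal O_d^\ast$ generated by $\epsilon_d$ (because $-1 \equiv 1 \bmod 2\mathcal O_d$), one obtains
$$
d \in \mathcal E \iff h^+(4d) = 3h^+(d) \iff \epsilon_d \equiv 1 \bmod 2\mathcal O_d,
$$
which by Lemma \ref{midnight} is the condition that $\epsilon_d$ have even second coordinate in a $\Z$-basis $\{1,\omega_d\}$, equivalently that $\epsilon_d$ already lie in the suborder $\mathcal O_{4d} = \Z[\sqrt d]$, equivalently that $\mathrm{Tr}(\epsilon_d)$ be even.

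Next I would convert this unit condition into a Frobenius condition at the single prime $2$ in a field attached to $d$. Since $d \equiv 5 \bmod 8$, the prime $2$ is inert in $K := \Q(\sqrt d)$ with residue field $\mathbb F_4$, and because $\mathbb F_4$ already contains the cube roots of unity, $2$ splits in $L_d := K(\zeta_3) = \Q(\sqrt d, \sqrt{-3})$ (a CM field with maximal real subfield $K$) into two primes with residue field $\mathbb F_4$. A short inspection of $\mathcal O_{L_d}^\ast$ shows that $\epsilon_d$ is never a cube in $L_d^\ast$, so $M_d := L_d(\epsilon_d^{1/3})$ is a cyclic cubic extension of $L_d$, unramified away from the primes above $3$ (here $\epsilon_d$ being a unit and $3 \neq 2$ are used), and $\Gal(M_d/\Q(\sqrt{-3})) \cong S_3$ with $\Gal(M_d/L_d)$ its subgroup of order $3$ (the quadratic conjugation acts by inversion on $\epsilon_d^{1/3}$). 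A prime of $L_d$ above $2$ splits completely in $M_d$ exactly when $\epsilon_d$ reduces to a cube in the cyclic group $\mathbb F_4^\ast$ of order $3$, i.e.\ to $1$, i.e.\ exactly when $\epsilon_d \equiv 1 \bmod 2\mathcal O_K$. Since $2$ already splits in $L_d/\Q(\sqrt{-3})$, its Frobenius lies in the subgroup of order $3$, and hence
$$
d \in \mathcal E \iff \text{a prime above } 2 \text{ splits completely in } M_d/L_d \iff \mathrm{Frob}_2 = \mathrm{id} \text{ in } \Gal(M_d/\Q(\sqrt{-3})).
$$

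The last, and hardest, step is the density estimate: average this criterion over squarefree $d \equiv 5 \bmod 8$ up to $x$, treating the squarefreeness condition with the same M\"obius sieve used above to evaluate $G_{5,8}(x)$. The essential obstacle is that $M_d$, via the unit $\epsilon_d$, is a genuinely global object, not visibly governed by how the primes dividing $d$ split in a fixed field --- the same difficulty one meets for the negative Pell equation $x^2 - dy^2 = -1$. Here I would follow Stevenhagen: organize the behaviour of $\mathrm{Frob}_2$ in $\Gal(M_d/\Q(\sqrt{-3}))$, via class field theory over $\Q(\sqrt{-3})$ and R\'edei-type reciprocity, into a governing-field description of ``$d \in \mathcal E$'' in terms of the primes $p \mid d$; one ingredient of that description is a genuine quadratic residue condition on $d$, and that single condition already confines $d$ to a set of upper density at most $1/2$, which gives $\limsup_{x\to\infty}\, \#\{d \in \mathcal E : d \le x\}/G_{5,8}(x) \le 1/2$. (The conjecturally smaller true density would require unwinding the remaining, higher-power, constraints, which we do not need here.) Given this, \eqref{pi2} follows at once: since $\mathcal S_{5,8}$ is the complement of $\mathcal E$ inside $\mathcal G_{5,8}$, we get $S_{5,8}(x) = G_{5,8}(x) - \#\{d \in \mathcal E : d \le x\} \ge (1/2 - o(1))\, G_{5,8}(x)$.
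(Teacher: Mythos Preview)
The paper does not prove this lemma at all: it is stated as a direct citation of Stevenhagen's theorem, and the only work done in \S\ref{Statistics} is to pass to the complement $\mathcal S_{5,8} = \mathcal G_{5,8}\setminus\mathcal E$. Your opening sentence already captures this, so as a proof of the lemma your proposal is correct.

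Your reconstruction goes well beyond what the paper does. The first two steps are accurate and track Stevenhagen's setup: the reformulation $d\in\mathcal E \iff \epsilon_d \equiv 1 \bmod 2\mathcal O_d \iff \epsilon_d\in\Z[\sqrt d]$ is exactly Lemma~\ref{midnight} combined with \eqref{eRayClass*}, and the passage to a Kummer extension $M_d=L_d(\epsilon_d^{1/3})$ over $L_d=\Q(\sqrt d,\zeta_3)$ with the splitting of $2$ analysed via $\mathbb F_4^\ast$ is the right translation into a Frobenius condition. Your third step, however, is really just a pointer back to Stevenhagen rather than a reconstruction: the phrase ``one ingredient of that description is a genuine quadratic residue condition on $d$'' does not identify which condition, nor why it cuts density by exactly a factor of two. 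Stevenhagen's actual mechanism for the $1/2$ bound is a reciprocity argument (cubic residue symbols over $\Q(\zeta_3)$) that produces, for $d$ with at least one prime factor in a suitable residue class, an explicit involution-type pairing; filling this in is the substance of his paper. So your sketch is a faithful outline but not a self-contained proof --- which is fine here, since the paper itself is content to cite.
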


Considering the complementary set, we obtain 

\begin{lemma}
As $x$ tends to infinity, one has 
$$
S_{5, 8} (x) \geq \left ( \frac 12 - o(1)\right)\cdot {G_{5, 8} (x)}. 
$$
\end{lemma}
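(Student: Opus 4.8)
The plan is to derive this statement as an immediate consequence of the preceding lemma (Stevenhagen's bound on the upper density of the Eisenstein set $\mathcal E$) by passing to complements inside $\mathcal G_{5,8}$. The key observation, already recorded just before the statement, is that for $d \equiv 5 \bmod 8$ one has $h^+(4d) \in \{h^+(d), 3h^+(d)\}$ by Lemma \ref{summer}, so the condition $h^+(4d) = 3h^+(d)$ defining $\mathcal E$ is exactly the negation of the condition $h^+(d) = h^+(4d)$ defining $\mathcal S_{5,8}$ (both taken within the set of squarefree $d \equiv 5 \bmod 8$). Hence $\mathcal S_{5,8}$ and $\mathcal E$ partition $\mathcal G_{5,8}$, which gives the exact counting identity
\begin{equation}
\label{eComplement}
S_{5,8}(x) + \vert\{d \in \mathcal E : d \leq x\}\vert = G_{5,8}(x)
\end{equation}
for every $x \geq 1$.

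From \eqref{eComplement} I would divide through by $G_{5,8}(x)$, which is legitimate since $G_{5,8}(x) \to \infty$ (indeed $G_{5,8}(x) \sim x/\pi^2$ by the Möbius computation recorded in \S\ref{infinitely}), to obtain
$$
\frac{S_{5,8}(x)}{G_{5,8}(x)} = 1 - \frac{\vert\{d \in \mathcal E : d \leq x\}\vert}{G_{5,8}(x)}.
$$
Taking $\liminf$ as $x \to \infty$ and using that $\liminf(1 - a_x) = 1 - \limsup a_x$, Stevenhagen's lemma gives
$$
\liminf_{x \to \infty} \frac{S_{5,8}(x)}{G_{5,8}(x)} = 1 - \limsup_{x \to \infty} \frac{\vert\{d \in \mathcal E : d \leq x\}\vert}{G_{5,8}(x)} \geq 1 - \frac 12 = \frac 12.
$$
Unwinding the definition of $\liminf$ then yields exactly $S_{5,8}(x) \geq (1/2 - o(1)) \cdot G_{5,8}(x)$ as $x \to \infty$, which is the claim.

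There is essentially no obstacle here: the proof is a formal manipulation once one has identified the complementary relationship between $\mathcal S_{5,8}$ and $\mathcal E$ inside $\mathcal G_{5,8}$, and that identification is itself just Lemma \ref{summer} together with the requirement of squarefreeness built into both \eqref{S58} and \eqref{defEisenstein}. The only point worth stating carefully is that all three sets $\mathcal G_{5,8}$, $\mathcal S_{5,8}$, $\mathcal E$ consist of squarefree integers congruent to $5 \bmod 8$, so that the dichotomy $h^+(4d) = h^+(d)$ versus $h^+(4d) = 3h^+(d)$ applies uniformly across $\mathcal G_{5,8}$ and really does partition it. (The deeper inequality \eqref{pi3} of Theorem \ref{S5} then follows by combining the present lemma with the asymptotic $G_{5,8}(x) \sim x/\pi^2$, as already indicated in the text.)
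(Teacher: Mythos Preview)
Your proof is correct and follows exactly the paper's approach: the paper's proof is the single phrase ``Considering the complementary set, we obtain'' the lemma, and you have spelled out precisely this complement argument (using that $\mathcal S_{5,8}$ and $\mathcal E$ partition $\mathcal G_{5,8}$ by Lemma \ref{summer}, then applying Stevenhagen's upper-density bound). There is nothing to add.
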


This proves \eqref{pi2} and the proof of Theorem \ref{S5} is complete.

In \cite[Conjecture 1.4]{Stevenhagen} Stevenhagen conjectures that the Eisenstein set $\mathcal E$ has density $1/3$ in $\mathcal G_{5, 8}$. This is a natural hypothesis since for the fundamental unit $\epsilon_d$, there are exactly three possible images $(a, b)$ modulo $2$ (with the notations of Lemma \ref{midnight}), and only one of these images corresponds to $d \in \mathcal E$.
 
\section{Proof of Theorem \ref{488}}
\label{againstevenhagen}
We are now interested in $d$ satisfying the congruence $d \equiv 5 \bmod 8$, and the condition $h^+(d) \neq h^+(4d)$. By Lemma \ref{summer}, this last condition is equivalent to $h^+(4d) = 3 h^+(d)$. If we impose the condition that $d$ is squarefree, we recognize the Eisenstein set $\mathcal E$ as defined in \eqref{defEisenstein}. In \cite[Theorem 1.6]{Stevenhagen} Stevenhagen proves that the Eisenstein set $\mathcal E$ is infinite, under the form of the asymptotic inequality
$$
\left\vert \{d \leq x : d \in \mathcal E\} \right\vert \gg \sqrt x.
$$
This gives Theorem \ref{488}.

\end{document}